\theoremstyle{plain}
  \newtheorem{thm}{Theorem}[section]
  \newtheorem{mthm}[thm]{Main Theorem}
  \newtheorem{lem}[thm]{Lemma}
  \newtheorem{cor}[thm]{Corollary}
  \newtheorem{prop}[thm]{Proposition}
  \newtheorem{claim}[thm]{Claim}
\theoremstyle{definition}
  \newtheorem{dfn}[thm]{Definition}
  \newtheorem{ex}[thm]{Example}
\theoremstyle{remark}
  \newtheorem{rem}[thm]{Remark}
\newcommand{\Lip}{\mathcal{L}{\it ip}}
\newcommand{\id}{{\rm id}}
\newcommand{\N}{\mathbb{N}}
\newcommand{\Z}{\mathbb{Z}}
\newcommand{\R}{\mathbb{R}}
\newcommand{\A}{\mathcal{A}}
\newcommand{\B}{\mathcal{B}}
\newcommand{\F}{\mathcal{F}}
\newcommand{\G}{\mathbb{G}}
\newcommand{\LL}{\mathcal{L}}
\newcommand{\M}{\mathcal{M}}
\renewcommand{\P}{\mathcal{P}}
\newcommand{\X}{\mathcal{X}}
\newcommand{\1}{\mathbf{1}}
\newcommand{\al}{\alpha}
\newcommand{\bal}{\boldsymbol{\alpha}}
\newcommand{\de}{\delta}
\newcommand{\ep}{\varepsilon}
\newcommand{\bep}{\boldsymbol{\ep}}
\newcommand{\ka}{\kappa}
\newcommand{\io}{\iota}
\newcommand{\ph}{\varphi}
\newcommand{\la}{\lambda}
\newcommand{\umu}{\underline{\mu}}
\newcommand{\tX}{\tilde{X}}
\newcommand{\concto}{\xrightarrow{{\rm conc}}}
\newcommand{\Boxto}{\xrightarrow{\Box}}
\DeclareMathOperator{\dKF}{{\it d}_{{\rm KF}}}
\DeclareMathOperator{\dKFH}{{\it d}_{\it H}^{{\rm KF}}}
\DeclareMathOperator{\dconc}{{\it d}_{{\rm conc}}}
\DeclareMathOperator{\Gr}{Gr}
\DeclareMathOperator{\Aff}{Aff}
\DeclareMathOperator{\diam}{diam}
\DeclareMathOperator{\supp}{supp}
\DeclareMathOperator{\grad}{grad}
\DeclareMathOperator{\Sep}{Sep}
\DeclareMathOperator{\Obsdiam}{ObsDiam}
\DeclareMathOperator{\udiam}{\underline{diam}}
\DeclareMathOperator{\uObsdiam}{\underline{ObsDiam}}
\numberwithin{equation}{section}
\begin{document}

\title[Observable diameter and positive measure point]{The observable diameter of metric measure spaces and the existence of points of positive measures}

\author{Shun Oshima}
\address{Mathematical Institute, Tohoku University,
Sendai 980-8578, Japan}
\email{shun.oshima.s3@dc.tohoku.ac.jp}

\begin{abstract}
A metric measure space is a metric space with a Borel measure. %This concept was defined when considering the limit of a sequence of Riemannian manifolds. Currently, there are several definitions of metric measure spaces, and one of them is a metric measure space with a Borel probability measure considered by Gromov. 
In Gromov's theory of metric measure spaces, there are important invariants called the partial diameter and the observable diameter. We obtain the result that the partial diameter or the observable diameter equals zero if and only if there exists a point that has positive measure.
\end{abstract}

\date{\today}

%\keywords{}

\maketitle
\section{introduction}
A metric measure space $(X,d_X,\mu_X)$ is a metric space $(X,d_X)$ with a Borel measure $\mu_X$ on $X$. It is defined as a generalization of a Riemannian manifold with the Riemannian distance and the volume measure constructed by a Riemannian metric. This concept was considered to be derived from the fact that if a sequence of Riemannian manifolds converges to some space, the limit is not necessarily a Riemannian manifold. By this generalization, many of the concepts and properties considered for Riemannian manifolds can also be considered for metric measure spaces. For example, the theory of Sobolev spaces on metric measure spaces and the theory of curvature dimension conditions, which formulates the condition that the Ricci curvature is bounded from below on a metric measure space, have been studied in Riemannian geometry in recent years. (e.g. \cite{AGS}, \cite{Kazukawa-Ozawa-Suzuki}, \cite{Oshima}). 

Currently, there are several definitions of metric measure space. One of them is a metric measure space such that $\mu_X$ is a probability measure as considered by Gromov \cite{Gromov}, which is called an mm-space. The theory of mm-spaces mainly deals with the convergence of a sequence of mm-spaces called the $\Box$-convergence and the concentration. In particular, it is known that the concentration is the weakest convergence among the currently known convergences of a sequence of metric measure spaces, which is derived from the concentration of measure phenomenon. Here, the concentration of measure phenomenon is a phenomenon in that measures are distributed unevenly in high-dimensional spaces, and was discovered and formalized by L\'{e}vy \cite{Levy} and Milman \cite{V.D.Milman}. This phenomenon can also be expressed in terms of functions as ``Any $1$-Lipschitz function on a high-dimensional space is measurably close to a constant function, that is, $1$-Lipschitz function on a one-point space". The concentration is a convergence considered so that the above can be interpreted as a high-dimensional space is close to a one-point space. 

In Gromov's theory, some of the important invariants of metric measure spaces are the $\al$-partial diameter and the $\al$-observable diameter for $\al\in (0,1)$. One of the properties of these invariants is that the $\al$-partial diameter (resp. the $\al$-observable diameter) of a sequence of mm-spaces converges to zero for any $\al\in (0,1)$ if and only if this sequence $\Box$-converges (resp. concentrates) to a one-point space. Moreover, it is known that the convergence of these two types of diameters to zero is not equivalent (see Example \ref{ex}, Remark \ref{rem}).

The main theorem of this paper is the following theorem which gives the equivalence condition for the $\al$-partial diameter and the $\al$-observable diameter of mm-space to be zero.
\begin{mthm}
\label{atomObs}
Let $X$ be an mm-space and let $\al \in (0,1)$. Then, the following are equivalent.
\begin{enumerate}
\item There exists a point $x\in X$ such that $\mu_X(\{x\})\ge \al$.
\item $\diam(X; \al)=0$.
\item $\Obsdiam(X;\al)=0$.
\end{enumerate}
\end{mthm}
This main theorem shows that these two types of diameters coinciding with zero for an mm-space and $\al\in (0,1)$ are equivalent. 

Furthermore, Main Theorem \ref{atomObs} is extended for the multivariable partial diameter and the multivariable observable diameter, which are concepts newly defined in this paper.
\begin{mthm}
\label{atomultiobsdiam}
Let $X$ be an mm-space and let $\bal=(\al_1,\ldots,\al_n)\in (0,\infty)^n$ with $n\in \N\cup\{\infty\}$ where $(0,\infty)^{\infty}=(0,\infty)^{\N}$. Then, the following are equivalent.
\begin{enumerate}
\item There exist an mm-space $Y$ with $Y\succ X$ and distinct points $\{y_i\}_{i=1}^n\subset Y$ such that $\mu_Y(\{y_i\})\ge \al_i$ for any $i\in \N\cap [1,n]$.
\item There exists $\{x_i\}_{i=1}^n \subset X$ such that 
\begin{equation*}
\mu_X(\{x_i\mid i\in I\})\ge \sum_{i\in I}\al_i 
\end{equation*}
for any $I\subset \N\cap [1,n]$.
\item $\udiam(X;\bal)=0$.
\item $\uObsdiam(X;\bal)=0$.
\end{enumerate} 
\end{mthm}
Finally, we describe the structure of this paper. In Section \ref{Preliminaries}, we state the definition and basic properties of mm-spaces.  In Section \ref{multi}, we define the multivariable partial (observable) diameter of mm-spaces and consider their properties. In Section \ref{ProofMT}, we give the proof of Main Theorem \ref{atomultiobsdiam}. In Section \ref{Propmultidiam}, we give some results of the multivariable partial (observable) diameter that are not directly related to the main theorem. In Section \ref{andef}, we give another definition of the multivariable partial and observable diameters and prove an analog of Main Theorem \ref{atomultiobsdiam} for these multivariable partial and observable diameters.

\section*{Notations}
We list the notations we will use throughout this paper.
\begin{itemize}
%\item For $x,y\in \R$, we write $x\lor y:=\max\{x,y\}$ and $x\land y:=\min\{x,y\}$.
\item $\B_X$ denotes the set of all Borel subsets of a topological space $X$.
\item $\P(X)$ denotes the set of all Borel probability measures on $X$.
%\item $C_b(X)$ denotes the set of all bounded continuous functions on $X$.
\item $\Lip_L(X)$ denotes the set of all $L$\,-Lipschitz functions on $X$.
%\item For $i=0,1$, $\proj_i:X\times X\to X$ denotes the projection defined by $\proj_i(x_0,x_1):=x_i$
%\item For $\mu,\nu\in \P(X)$, we write $\nu\ll\mu$ to mean that $\nu$ is absolutely continuous with respect to $\mu$, and $\nu\nll\mu$ if it is not.
\item $\mu\otimes\nu$ denotes the product measure of two measures $\mu$ and $\nu$. 
\item $\1_A$ denotes the characteristic function of a subset $A\subset X$.
%\item For maps $p,q:X\to Y$, $p\times q:X\times X\to Y\times Y$ denotes the product of $p$ and $q$ defined by $(p\times q)(x_0,x_1):=(p(x_0),q(x_1))$.
\item For $x\in X$, $\de_x$ denotes the Dirac measure, namely, $\de_x$ is a probability measure on $X$ defined as $\de_x(A):=\1_A(x)$ for any measurable set $A$.
\item $\supp \mu$ denotes the support of a Borel measure $\mu$ on a topological space $X$.
\item $B_X(x,r)$ denotes the open ball in a metric space $(X,d_X)$ with center $x\in X$ and radius $r>0$.
\item For a point $x$ in a metric space $(X,d_X)$ and a subset $A$ of $X$, we write $d_X(x,A):=\inf_{a\in A}d_X(x,a)$.
\item For two Borel subsets $A$ and $B$ of a metric space $(X,d_X)$, we write 
\begin{equation*}
d_X(A,B):=\inf_{x\in A,\ y\in B}d_X(x,y).
\end{equation*}
\item $N_\ep(A)$ denotes the $\ep$-neighborhood of a subset $A$ of a metric space $(X,d_X)$, namely,
\begin{equation*}
N_\ep(A):=\bigcup_{a\in A}B_X(a,\ep).
\end{equation*}
\item For $p\in [1,\infty]$ and $n\in\N\cup\{\infty\}$, $\|x\|_p$ denotes the $\ell^p$-norm of $x=(x_1,\ldots,x_n)\in \R^n$, namely,
\begin{equation*}
\|x\|_p:=
\begin{cases}
\displaystyle \left(\sum_{i=1}^n|x_i|^p\right)^{\frac{1}{p}}\qquad&(p<\infty)\\
\displaystyle \sup_{1\le i\le n}|x_i|\qquad&(p=\infty)
\end{cases}
.
\end{equation*}
\item For $x,y\in \R^n$, $\langle x,y\rangle$ denotes the Euclidean inner product of $x$ and $y$.
\end{itemize}
\section{Preliminaries}
\label{Preliminaries}

In this section, we give the definition and basic properties of mm-spaces.
\begin{dfn}[mm-space]
A triple $(X,d_X,\mu_X)$ is called an \emph{mm-space} if $(X,d_X)$ is a complete separable metric space and $\mu_X$ is a Borel probability measure on $X$.
\end{dfn}
\begin{dfn}[mm-isomorphism]
Two mm-spaces $(X,d_X,\mu_X)$ and $(Y,d_Y,\mu_Y)$ are said to be \emph{mm-isomorphic} if there exists an isometry $f:\supp \mu_X\to \supp \mu_Y$ such that the push-forward measure $f_*\mu_X$ of $\mu_X$ by $f$ equals $\mu_Y$. Such an $f$ is called an \emph{mm-isomorphism}.
\end{dfn}
Let $\X$ denote the set of mm-isomorphism classes of mm-spaces. 
\begin{dfn}[Lipschitz order]
Let $X$ and $Y$ be two mm-spaces. We say that $X$ (\emph{Lipschitz}) \emph{dominates} $Y$ and write $Y \prec X$ (or $X\succ Y$) if there exists a $1$-Lipschitz map $f:X\to Y$ satisfying $f_*\mu_X = \mu_Y$. We call such an $f$ a \emph{dominating map}, and we call the relation $\prec$ on $\X$ the \emph{Lipschitz order}.
\end{dfn}
\begin{dfn}[Parameter]
Let $I:=[0,1)$ and let $X$ be an mm-space. A Borel map $\ph:I\to X$ is called a \emph{parameter of $X$} if $\ph$ satisfies $\ph_*\LL^1=\mu_X$, where $\LL^1$ is the Lebesgue measure on $I$.
\end{dfn}
\begin{prop}[{\cite[Lemma 4.2]{ShioyaMMG}}]
Any mm-space has a parameter. 
\end{prop}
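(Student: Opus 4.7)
The plan is to construct the parameter $\ph$ by first reducing to a Borel subset of the unit interval via the Kuratowski (Borel) isomorphism theorem, and then using the quantile function of the resulting measure on $[0,1]$. Since $(X,d_X)$ is complete and separable, $(X,\B_X)$ is a standard Borel space, so there is a Borel isomorphism $\psi\colon X\to B$ onto some Borel set $B\subseteq [0,1]$. Pushing forward by $\psi$ produces a Borel probability measure $\nu:=\psi_*\mu_X$ on $[0,1]$ with $\nu(B)=1$. Once I construct a parameter for $\nu$ taking values in $B$, composing with $\psi^{-1}$ yields a parameter for $\mu_X$.

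For the real-line step, let $F(t):=\nu([0,t])$ be the distribution function of $\nu$, which is nondecreasing and right-continuous. Define
\[
\ph_0\colon I\to [0,1],\qquad \ph_0(s):=\inf\{t\in[0,1] : F(t)\ge s\}.
\]
Then $\ph_0$ is nondecreasing, hence Borel measurable. Using the right-continuity and monotonicity of $F$, one checks the identity $\{s\in I : \ph_0(s)\le t\}=[0,F(t)]\cap I$ for every $t\in[0,1]$, whence $\LL^1(\ph_0^{-1}([0,t]))=F(t)=\nu([0,t])$. Since $\{[0,t]\}_{t\in[0,1]}$ generates $\B_{[0,1]}$, a standard $\pi$-$\lambda$ argument yields $(\ph_0)_*\LL^1=\nu$.

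The main technical obstacle is that $\ph_0$ is only guaranteed to take values in $[0,1]$, not in the Borel set $B$, so $\psi^{-1}\circ\ph_0$ need not be pointwise defined. However, $\nu(B)=1$ implies that the exceptional set $E:=\ph_0^{-1}([0,1]\setminus B)$ has Lebesgue measure zero. Fixing an arbitrary $x_0\in X$, I would set
\[
\ph(s):=\begin{cases} \psi^{-1}(\ph_0(s)) & \text{if } s\notin E,\\ x_0 & \text{if } s\in E, \end{cases}
\]
which is Borel measurable and agrees with $\psi^{-1}\circ\ph_0$ outside the $\LL^1$-null set $E$; hence $\ph_*\LL^1=\psi^{-1}_*\nu=\mu_X$, as required. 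A more self-contained alternative avoiding Kuratowski's theorem is an inductive construction using nested Borel partitions of $X$ into cells of diameter $<2^{-n}$, assigning each cell a subinterval of $I$ of matching $\mu_X$-measure compatibly with refinement; the completeness of $X$ then lets one define $\ph(s)$ as the unique limit of the shrinking cells whose assigned intervals contain $s$, at the cost of a more intricate verification of measurability.
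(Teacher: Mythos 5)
Your argument is correct: the Kuratowski/Borel-isomorphism step reduces everything to a probability measure $\nu$ on $[0,1]$, the generalized inverse $\ph_0$ of the distribution function pushes $\LL^1$ forward to $\nu$ exactly as you say (the identity $\{s\in I:\ph_0(s)\le t\}=[0,F(t)]\cap I$ does follow from monotonicity and right-continuity, and the $\pi$-$\lambda$ argument closes the gap), and redefining $\ph$ on the $\LL^1$-null set $E=\ph_0^{-1}([0,1]\setminus B)$ is harmless, so $\ph_*\LL^1=\mu_X$. Note, however, that the paper gives no proof of this proposition at all: it quotes it from \cite{ShioyaMMG}*{Lemma 4.2}, and the proof there is essentially the second construction you only sketch at the end — one takes successively refining countable Borel partitions of $X$ with mesh tending to $0$, assigns to the cells nested half-open subintervals of $I$ of matching $\mu_X$-measure, and uses completeness to define $\ph(s)$ as the unique point in the intersection of the shrinking cells attached to $s$. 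The trade-off is the expected one: your main route is shorter and the measure-theoretic verification is a one-line inverse-CDF computation, but it invokes the Borel isomorphism theorem as a black box and never interacts with the metric; Shioya's partition construction is elementary and self-contained (only separability and completeness of $X$ enter) and produces the parameter by the same kind of explicit discrete approximation on which the box-distance arguments elsewhere in the theory are modeled. Either proof is acceptable.
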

\begin{dfn}[Box distance between mm-spaces]
The \emph{box distance} $\Box(X,Y)$ between two mm-spaces $X$ and $Y$ is defined by the infimum of $\ep>0$ satisfying that there exist a Borel set $\tilde{I}\subset I$ and parameters $\ph:I\to X$ and $\psi:I\to Y$ of $X$ and $Y$ such that
\begin{enumerate}
\item $\LL^1(\tilde{I})\ge1-\ep$,
\item $|d_X(\ph(s),\ph(t))-d_Y(\psi(s),\psi(t))|\le\ep$\qquad for any $s,t\in \tilde{I}$.
\end{enumerate}
\end{dfn}
It is known that $\Box$ is a metric on $\X$ and $(\X,\Box)$ is a complete metric space \cite[Theorem 4.10, Theorem 4.14]{ShioyaMMG}. If a sequence $\{X_n\}_{n\in \N}$ of mm-spaces converges to an mm-space $Y$ with respect to $\Box$, then we say that $\{X_n\}_{n\in \N}$  \emph{$\Box$-converges to $Y$} and write $X_n\Boxto Y$. The topology on $\X$ induced by $\Box$ is called the \emph{box topology}.
%\begin{prop}[\cite{ShioyaMMG}, Theorem 4.10, Theorem 4.14]
%$\Box$ is a metric on $\X$ and $(\X,\Box)$ is a complete metric space.
%\end{prop}
%\begin{dfn}[$\Box$-convergence]
%If a sequence of mm-spaces $\{X_n\}_{n\in \N}$ converges to some mm-space $Y$ with respect to $\Box$, we say that $\{X_n\}_{n\in \N}$  \emph{$\Box$-converges to $Y$} and denote by $X_n\Boxto Y$.
%\end{dfn}
\begin{prop}[{\cite[Proposition 4.12]{ShioyaMMG}}]
\label{Boxwconv}
Let $X$ be a complete separable metric space. For any two Borel probability measures $\mu$ and $\nu$ on $X$, we have
\begin{equation*}
\frac{1}{2}\Box((X,\mu),(X,\nu))\le \Box((2^{-1}X,\mu),(2^{-1}X,\nu))\le d_P(\mu,\nu),
\end{equation*}
where $d_P$ is the Prokhorov metric on $\P(X)$. In particular, if a sequence $\{\mu_n\}_{n\in \N}$ in $\P(X)$ converges weakly to some $\mu\in \P(X)$, then $(X,\mu_n)$ $\Box$-converges to $(X,\mu)$.
\end{prop}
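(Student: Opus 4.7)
The plan is to prove the two inequalities separately and then deduce the weak convergence statement. For the first inequality, I would use a direct scaling argument: if $\ph,\psi \colon I \to X$ are parameters witnessing $\Box((2^{-1}X,\mu),(2^{-1}X,\nu)) \le \ep$ with Borel set $\tilde{I}\subset I$, then the pushforward condition $\ph_*\LL^1 = \mu$ is unaffected by rescaling the metric, so they are also parameters of $(X,\mu)$ and $(X,\nu)$. The metric condition becomes
\begin{equation*}
|d_X(\ph(s),\ph(t))-d_X(\psi(s),\psi(t))| = 2\,|d_{2^{-1}X}(\ph(s),\ph(t))-d_{2^{-1}X}(\psi(s),\psi(t))| \le 2\ep
\end{equation*}
for all $s,t\in \tilde{I}$, and $\LL^1(\tilde{I})\ge 1-\ep\ge 1-2\ep$. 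Hence $\Box((X,\mu),(X,\nu))\le 2\ep$, giving the first inequality after taking infima.

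For the second inequality, I would invoke Strassen's theorem characterizing the Prokhorov metric: for every $\ep>d_P(\mu,\nu)$, there exists a coupling $\pi$ of $\mu$ and $\nu$ with $\pi(\{(x,y)\in X\times X\mid d_X(x,y)\ge \ep\})\le \ep$. Since $X$ is complete separable, so is $X\times X$, and thus $(X\times X,\pi)$ is an mm-space. Applying the parameter existence result stated above, there is a Borel map $\Phi\colon I\to X\times X$ with $\Phi_*\LL^1=\pi$. Setting $\ph:=\proj_1\circ\Phi$ and $\psi:=\proj_2\circ\Phi$ yields parameters of $(X,\mu)$ and $(X,\nu)$ since $\pi$ is a coupling. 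Define $\tilde{I}:=\{t\in I\mid d_X(\ph(t),\psi(t))<\ep\}$; by the marginal condition on $\pi$, $\LL^1(\tilde{I})\ge 1-\ep$. The triangle inequality then gives for $s,t\in \tilde{I}$:
\begin{equation*}
|d_X(\ph(s),\ph(t))-d_X(\psi(s),\psi(t))|\le d_X(\ph(s),\psi(s))+d_X(\ph(t),\psi(t))<2\ep,
\end{equation*}
which, after dividing by $2$, is exactly the box-distance condition on $2^{-1}X$ at scale $\ep$. Taking infimum over $\ep>d_P(\mu,\nu)$ yields the second inequality.

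The ``in particular'' statement is immediate: on a separable metric space, weak convergence $\mu_n\wto \mu$ is equivalent to convergence in $d_P$, and by the two inequalities $\Box((X,\mu_n),(X,\mu))\le 2\,d_P(\mu_n,\mu)\to 0$.

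The main obstacle is the second inequality, and specifically the step of turning a coupling $\pi$ into synchronized parameters $\ph,\psi$. This requires both Strassen's characterization of the Prokhorov metric and the lifting of the product measure $\pi$ to a joint Borel map $\Phi\colon I\to X\times X$ via the parameter-existence result; once $\Phi$ is in hand, the rest reduces to the triangle inequality and a measure estimate for $\tilde{I}$.
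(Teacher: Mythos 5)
Your proposal is correct, and it is essentially the standard argument behind the cited result: the paper itself quotes \cite[Proposition 4.12]{ShioyaMMG} without proof, and the proof there proceeds exactly as you do, via Strassen's characterization of $d_P$, a parameter of the coupling $(X\times X,\pi)$ projected to synchronized parameters $\ph,\psi$, the triangle inequality on $\tilde{I}$, and the elementary scaling comparison for the factor $2^{-1}$. No gaps worth noting beyond checking that $\tilde{I}$ is Borel, which follows since $t\mapsto d_X(\ph(t),\psi(t))$ is Borel.
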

\begin{dfn}[Observable diameter]
Let $X$ be an mm-space and let $\al\ge 0$. We define the \emph{$\al$-partial diameter $\diam(X;\al)$ of $X$} by
\begin{equation*}
\diam(X;\al)=\diam(\mu_X;\al):=\inf\{\diam A\mid A\in \B_X \text{ with } \mu_X(A)\ge \al\},
\end{equation*}
and we define 
\begin{align*}
\Obsdiam(X;\al):=&\sup\{\diam(f_*\mu_X;\al)\mid f\in \Lip_1(X)\},\\
\Obsdiam(X):=&\inf_{\al\in (0,1)}((1-\al)\lor \Obsdiam(X;\al)).
\end{align*}
We call $\Obsdiam(X;\al)$ (resp. $\Obsdiam(X)$) the \emph{$\al$-observable diameter of $X$} (resp. \emph{observable diameter of $X$}).
\end{dfn}
\begin{rem}
The notation of $\al$-observable diameter in this paper is different from the previously known notation, e.g. in \cite{ShioyaMMG}, in order to be consistent with the multivariable observable diameter, which will be defined in Section \ref{multi}.
\end{rem}
\begin{prop}[{\cite[Proposition 2.18]{ShioyaMMG}}]
\label{Obsdiamonotone}
Let $X$ and $Y$ be two mm-spaces and let $\al\in (0,1)$.
\begin{enumerate}
\item If $X$ is dominated by $Y$, then
\begin{equation*}
\diam(X;\al) \le \diam(Y;\al).
\end{equation*}
\item We have
\begin{equation*}
\Obsdiam(X;\al) \le \diam(X;\al).
\end{equation*}
\item If $X$ is dominated by $Y$, then
\begin{equation*}
\Obsdiam(X;\al) \le \Obsdiam(Y;\al).
\end{equation*}
  \end{enumerate}
\end{prop}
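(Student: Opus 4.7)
The plan is to derive all three claims from a single, uniform use of the defining property of a $1$-Lipschitz map $h$ with $h_*\mu=\nu$: images of Borel sets have non-decreased $\nu$-measure and non-increased diameter. The only technical nuisance is that the direct image of a Borel set under a continuous map need not be Borel, which I sidestep by passing to its closure.

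For (1), let $f\colon Y\to X$ be a dominating map and let $B\in\B_Y$ satisfy $\mu_Y(B)\ge\al$. Setting $A:=\overline{f(B)}$, one has $A$ closed (hence Borel) with $\diam A=\diam f(B)\le\diam B$ since $f$ is $1$-Lipschitz, and $\mu_X(A)=\mu_Y(f^{-1}(A))\ge\mu_Y(B)\ge\al$ because $f^{-1}(A)\supset B$ and $f_*\mu_Y=\mu_X$. Taking the infimum over $B$ yields (1). For (2), the same closure argument applies with any $f\in\Lip_1(X)$ viewed as a $1$-Lipschitz map onto the mm-space $(\R,f_*\mu_X)$: each $A\in\B_X$ with $\mu_X(A)\ge\al$ produces the closed set $\overline{f(A)}\subset\R$ of $f_*\mu_X$-measure at least $\al$ and diameter at most $\diam A$, so $\diam(f_*\mu_X;\al)\le\diam(X;\al)$; taking the supremum over $f$ gives (2).

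For (3), composition turns the dominating map $g\colon Y\to X$ and any $f\in\Lip_1(X)$ into $f\circ g\in\Lip_1(Y)$, and $(f\circ g)_*\mu_Y=f_*g_*\mu_Y=f_*\mu_X$. Hence $\diam(f_*\mu_X;\al)=\diam((f\circ g)_*\mu_Y;\al)\le\Obsdiam(Y;\al)$, and taking the supremum over $f\in\Lip_1(X)$ concludes. I expect no substantial obstacle; the only point worth flagging is the closure manoeuvre used to avoid the possible non-Borel nature of forward images, and the trivial check that $(\R,f_*\mu_X)$ is a legitimate mm-space so that the partial diameter appearing in the definition of $\Obsdiam(X;\al)$ is well-defined.
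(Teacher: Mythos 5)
Your argument is correct, and it is essentially the standard proof of this statement: the paper itself gives no proof but cites \cite{ShioyaMMG}*{Proposition 2.18}, whose argument proceeds in the same way (push a nearly-optimal set forward through the dominating map, using the closure to stay Borel, and obtain (3) by composing a $1$-Lipschitz function with the dominating map). No gaps; the closure manoeuvre and the check that $(\R,f_*\mu_X)$ is an mm-space are exactly the right points to flag.
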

\begin{prop}
\label{box*}
Let $*$ be an mm-space consisting of one point, i.e. $*:=(\{*\}, d_*, \de_*)$. Then, a sequence $\{X_n\}_{n\in \N}$ of mm-spaces $\Box$-converges to $*$ if and only if 
\begin{equation*}
\lim_{n\to\infty}\diam(X_n;\al)=0
\end{equation*} 
for any $\al\in (0,1)$.
\end{prop}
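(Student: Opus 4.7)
The plan is to prove both directions directly from the definition of the box distance, using the trivial structure of the one-point space $*$.

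For the ``only if'' direction, I would unpack what $\Box(X_n, *) \le \ep_n$ means: there exist a Borel set $\tilde I_n \subset I$ and parameters $\ph_n : I \to X_n$, $\psi_n : I \to \{*\}$ with $\LL^1(\tilde I_n) \ge 1-\ep_n$ and $|d_{X_n}(\ph_n(s),\ph_n(t)) - d_*(\psi_n(s),\psi_n(t))| \le \ep_n$ for $s,t \in \tilde I_n$. Since $d_*$ vanishes identically, this gives $d_{X_n}(\ph_n(s),\ph_n(t)) \le \ep_n$ for $s,t \in \tilde I_n$. Pick any $t_0 \in \tilde I_n$ and set $x_n := \ph_n(t_0)$; then $\tilde I_n \subset \ph_n^{-1}(\bar B_{X_n}(x_n,\ep_n))$, and since $(\ph_n)_*\LL^1 = \mu_{X_n}$, we get $\mu_{X_n}(\bar B_{X_n}(x_n,\ep_n)) \ge 1-\ep_n$. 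As this ball has diameter at most $2\ep_n$, for any $\al \in (0,1)$ and all $n$ with $\ep_n < 1-\al$ we conclude $\diam(X_n;\al) \le 2\ep_n$, which tends to zero.

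For the ``if'' direction, fix $\ep>0$ and choose $\al \in (0,1)$ with $1-\al<\ep$. By hypothesis, $\diam(X_n;\al)<\ep$ for all large $n$, so there exists a Borel set $A_n \subset X_n$ with $\mu_{X_n}(A_n)\ge \al$ and $\diam A_n<\ep$. Take any parameter $\ph_n$ of $X_n$ (which exists by the cited proposition) and the unique parameter $\psi_n$ of $*$, and set $\tilde I_n := \ph_n^{-1}(A_n)$. Then $\LL^1(\tilde I_n) = \mu_{X_n}(A_n) \ge \al > 1-\ep$, and for $s,t \in \tilde I_n$ both $\ph_n(s),\ph_n(t)$ lie in $A_n$, giving $|d_{X_n}(\ph_n(s),\ph_n(t)) - d_*(\psi_n(s),\psi_n(t))| = d_{X_n}(\ph_n(s),\ph_n(t)) < \ep$. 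Hence $\Box(X_n,*) \le \ep$ for large $n$, and letting $\ep \downarrow 0$ concludes.

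There is not really a main obstacle here: the proof is a direct translation between the two definitions, with the trivial metric on $*$ doing most of the work. The only mildly delicate point is measurability, which is handled by replacing the image $\ph_n(\tilde I_n)$ (which need not be Borel, as $\ph_n$ is only Borel measurable) by the closed ball $\bar B_{X_n}(x_n,\ep_n)$ in the forward direction, so that the preimage is automatically Borel and the push-forward identity $(\ph_n)_*\LL^1=\mu_{X_n}$ transfers Lebesgue mass to $\mu_{X_n}$-mass without fuss.
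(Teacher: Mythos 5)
Your proof is correct. Note that the paper states this proposition without giving a proof (it is treated as a standard fact), so there is no argument of the author's to compare against; your direct unpacking of the two definitions is exactly the standard route, and both directions check out: the closed ball $\bar B_{X_n}(x_n,\ep_n)$ fixes the measurability of the "good" set in the forward direction, and $\tilde I_n:=\ph_n^{-1}(A_n)$ together with $(\ph_n)_*\LL^1=\mu_{X_n}$ does the job in the converse. The only cosmetic point is at the very start of the forward direction: since $\Box(X_n,*)$ is an infimum that need not be attained, you should take $\ep_n$ strictly larger than $\Box(X_n,*)$ (say $\ep_n:=\Box(X_n,*)+1/n$) to guarantee an admissible triple $(\tilde I_n,\ph_n,\psi_n)$ at level $\ep_n$; with that adjustment, and observing that $\tilde I_n\ne\emptyset$ once $\ep_n<1$, the argument is complete.
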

\begin{dfn}[L\'{e}vy family]
A sequence $\{X_n\}_{n\in \N}$ of mm-spaces is called a \emph{L\'{e}vy family} if
\begin{equation*}
\lim_{n\to \infty}\Obsdiam(X_n)=0,
\end{equation*}
or equivalently
\begin{equation*}
\lim_{n\to \infty}\Obsdiam(X_n;\al)=0
\end{equation*}
for any $\al\in (0,1)$.% (Also, by Remark \ref{Obsdecrea}, it is equivalent when ``for any sufficiently small $\ka>0$''.)
\end{dfn}
\begin{dfn}[Ky Fan metric]
The \emph{Ky Fan distance} $\dKF(f,g)$ between two measurable functions $f,g:\Omega \to \R$ on a probability measure space $(\Omega, \mu)$ is defined by 
\begin{equation*}
\dKF(f,g):=\inf\left\{\ep\ge0\mid \mu(\{x\in \Omega\mid |f(x)-g(x)|>\ep\})\le \ep\right\}.
\end{equation*}
This distance function $\dKF$ is called the \emph{Ky Fan metric}. 
\end{dfn}
\begin{prop}[cf. {\cite[Lemma 1.26]{ShioyaMMG}}]
\label{Prolem1}
Let $f,g:X\to \R$ be two Borel functions on an mm-space $X$. Then, we have
\begin{equation*}
d_P(f_*\mu_X,g_*\mu_X)\le \dKF(f,g).
\end{equation*}
\end{prop}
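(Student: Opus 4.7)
The plan is to unpack the definition of the Prokhorov metric directly by transferring the defining inequality of the Ky Fan distance to the pushforward measures. Fix $\ep > \dKF(f,g)$, so that the bad set
\begin{equation*}
E_\ep := \{x \in X \mid |f(x)-g(x)|>\ep\}
\end{equation*}
satisfies $\mu_X(E_\ep) \le \ep$.

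The key pointwise observation is that for any Borel set $A \subset \R$, if $x \in f^{-1}(A) \setminus E_\ep$, then $f(x) \in A$ and $|f(x)-g(x)| \le \ep$, so $g(x) \in N_\ep(A)$. Hence $f^{-1}(A)\setminus E_\ep \subset g^{-1}(N_\ep(A))$, and therefore
\begin{equation*}
f_*\mu_X(A) = \mu_X(f^{-1}(A)) \le \mu_X(g^{-1}(N_\ep(A))) + \mu_X(E_\ep) \le g_*\mu_X(N_\ep(A)) + \ep.
\end{equation*}
Since $E_\ep$ is symmetric in $f$ and $g$, interchanging their roles gives the reverse estimate $g_*\mu_X(A)\le f_*\mu_X(N_\ep(A))+\ep$, so $\ep$ is admissible in the definition of $d_P(f_*\mu_X,g_*\mu_X)$.

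Taking the infimum over $\ep > \dKF(f,g)$ yields $d_P(f_*\mu_X,g_*\mu_X)\le \dKF(f,g)$. There is no substantive obstacle here; the argument is a routine unwinding of definitions, and the only care needed is matching strict and non-strict inequalities, which is handled by working with $\ep>\dKF(f,g)$ and passing to the infimum at the end.
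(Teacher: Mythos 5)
Your argument is correct and is exactly the standard proof of this estimate; the paper itself gives no proof here, simply citing \cite[Lemma 1.26]{ShioyaMMG}, whose argument proceeds the same way (transfer the Ky Fan bad set to the pushforwards and compare with the Prokhorov definition). The only detail to watch is that $N_\ep(A)$ is the \emph{open} $\ep$-neighborhood, so the pointwise bound $|f(x)-g(x)|\le\ep$ does not by itself place $g(x)$ in $N_\ep(A)$ when equality occurs; this is repaired by choosing an admissible $\ep'<\ep$ (possible since $\ep>\dKF(f,g)$), so that off a set of measure at most $\ep'$ one has $|f(x)-g(x)|\le\ep'<\ep$, and your concluding passage to the infimum over $\ep>\dKF(f,g)$ is unaffected.
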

\begin{prop}[cf. {\cite[Corollary 4.37]{ShioyaMMG}}]
\label{Prolem2}
Let $f:X\to \R$ be a $1$-Lipschitz function on a metric space $X$. For any $\mu,\nu\in \P(X)$, we have
\begin{equation*}
d_P(f_*\mu,f_*\nu)\le d_P(\mu,\nu).
\end{equation*}
\end{prop}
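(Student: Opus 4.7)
The plan is to unwind the definition of the Prokhorov distance and exploit the fact that $1$-Lipschitz maps contract $\varepsilon$-neighborhoods. Recall that for Borel probability measures $\mu,\nu$ on a metric space $Z$,
\begin{equation*}
d_P(\mu,\nu)=\inf\bigl\{\ep>0 \mid \mu(A)\le \nu(N_\ep(A))+\ep \text{ for every Borel } A\subset Z\bigr\},
\end{equation*}
which is symmetric in $\mu$ and $\nu$. So I will fix any $\ep>d_P(\mu,\nu)$ and show that the same $\ep$ witnesses $d_P(f_*\mu,f_*\nu)\le\ep$, after which taking the infimum in $\ep$ yields the claim.

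The key geometric step is that for any Borel $B\subset\R$, one has $N_\ep(f^{-1}(B))\subset f^{-1}(N_\ep(B))$. Indeed, if $y\in N_\ep(f^{-1}(B))$, pick $x\in f^{-1}(B)$ with $d_X(y,x)<\ep$; by the $1$-Lipschitz property $|f(y)-f(x)|\le d_X(y,x)<\ep$, and since $f(x)\in B$ we get $f(y)\in N_\ep(B)$. Combining this with the defining inequality of $d_P(\mu,\nu)$ applied to the Borel set $f^{-1}(B)\subset X$ gives
\begin{equation*}
f_*\mu(B)=\mu(f^{-1}(B))\le \nu(N_\ep(f^{-1}(B)))+\ep \le \nu(f^{-1}(N_\ep(B)))+\ep=f_*\nu(N_\ep(B))+\ep.
\end{equation*}
The roles of $\mu$ and $\nu$ are symmetric, so the analogous inequality with $\mu$ and $\nu$ swapped holds as well, proving $d_P(f_*\mu,f_*\nu)\le\ep$.

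There is essentially no obstacle here beyond the inclusion $N_\ep(f^{-1}(B))\subset f^{-1}(N_\ep(B))$, which is the only place where the Lipschitz hypothesis is used. The rest is a purely formal manipulation of the Prokhorov definition, and the target space being $\R$ plays no special role—the same argument works whenever $f$ is a $1$-Lipschitz map between metric spaces.
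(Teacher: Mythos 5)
Your proof is correct: the inclusion $N_\ep(f^{-1}(B))\subset f^{-1}(N_\ep(B))$ for $1$-Lipschitz $f$, fed into the defining inequality of the Prokhorov metric for the Borel set $f^{-1}(B)$, is exactly the standard argument, and all sets involved are Borel since $N_\ep(\cdot)$ is open and $f$ is continuous. The paper itself gives no proof, simply citing \cite{ShioyaMMG}*{Corollary 4.37}, and your direct argument is the same one that proves that result; the remark about symmetry of the one-sided Prokhorov condition is not even needed, since running your argument in both directions already yields both inequalities for $f_*\mu$ and $f_*\nu$.
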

\begin{dfn}[Observable distance]
For two mm-spaces $X$ and $Y$, The \emph{observable distance} $\dconc(X,Y)$ between $X$ and $Y$ is defined as 
\begin{equation*}
\dconc(X,Y):=\inf_{\ph, \psi}\dKFH\left(\ph^*\Lip_1(X),\psi^*\Lip_1(Y)\right),
\end{equation*}
where $\ph$ and $\psi$ run over all parameters of $X$ and $Y$ respectively, $\ph^*\Lip_1(X):=\{f\circ\ph\mid f\in \Lip_1(X)\}$ which is a subset of the set of all measurable functions on $(I, \LL^1)$, and $\dKFH$ is the Hausdorff distance with respect to the Ky Fan metric.
\end{dfn}
It is also known that $\dconc$ is a metric on $\X$ ({\cite[Theorem 5.13]{ShioyaMMG}}). If a sequence $\{X_n\}_{n\in \N}$ of mm-spaces converges to an mm-space $Y$ with respect to $\dconc$, then we say that \emph{$\{X_n\}_{n\in \N}$ concentrates to $Y$} and write $X_n\concto Y$.
%\begin{prop}[\cite{ShioyaMMG}, Theorem 5.16]
%\dconc$ is a metric on $\X$.
%\end{prop}
%\begin{dfn}[Concentration]
%If a sequence of mm-spaces $\{X_n\}_{n\in \N}$ converges to some mm-space $Y$ with respect to $\dconc$, we say that \emph{$\{X_n\}_{n\in \N}$ concentrates to $Y$} and denote by $X_n\concto Y$.
%\end{dfn}
\begin{prop}[{\cite[Proposition 5.5]{ShioyaMMG}}]
\label{Boxconc}
For any $X,Y\in \X$, we have $\dconc(X,Y)\le \Box(X,Y)$. In particular, if a sequence $\{X_n\}_{n\in \N}$ of mm-spaces $\Box$-converges to an mm-space $Y$, then $\{X_n\}_{n\in \N}$ concentrates to $Y$.
\end{prop}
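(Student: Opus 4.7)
The plan is to fix $\ep > \Box(X,Y)$, produce parameters $\ph$ of $X$ and $\psi$ of $Y$ that realize the box inequality for $\ep$, and then show these very same parameters already witness $\dKFH(\ph^*\Lip_1(X),\psi^*\Lip_1(Y))\le\ep$. Taking the infimum over $\ep\downarrow\Box(X,Y)$ will give $\dconc(X,Y)\le\Box(X,Y)$, and the stated convergence consequence then follows immediately.

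By the definition of $\Box$, choose a Borel set $\tilde I\subset I$ with $\LL^1(\tilde I)\ge 1-\ep$ and parameters $\ph:I\to X$, $\psi:I\to Y$ such that $|d_X(\ph(s),\ph(t))-d_Y(\psi(s),\psi(t))|\le\ep$ for every $s,t\in\tilde I$. Given $f\in\Lip_1(X)$, I would define a candidate partner on $Y$ by the inf-convolution
\begin{equation*}
g(y):=\inf_{s\in\tilde I}\bigl(f(\ph(s))+d_Y(y,\psi(s))\bigr),\qquad y\in Y.
\end{equation*}
Fixing any $s_0\in\tilde I$ and using the box inequality at the pair $(s_0,s)$, one checks that the integrand is bounded below by $f(\ph(s_0))-d_Y(y,\psi(s_0))-\ep$, so $g(y)>-\infty$ and $g$ is a well-defined real-valued function. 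It is $1$-Lipschitz as an infimum of a family of $1$-Lipschitz functions. The key calculation is that for every $t\in\tilde I$, taking $s=t$ in the infimum gives $g(\psi(t))\le f(\ph(t))$, while the box inequality together with $f\in\Lip_1(X)$ gives, for every $s\in\tilde I$,
\begin{equation*}
f(\ph(s))+d_Y(\psi(t),\psi(s))\ge f(\ph(t))-d_X(\ph(t),\ph(s))+d_Y(\psi(t),\psi(s))\ge f(\ph(t))-\ep,
\end{equation*}
so $g(\psi(t))\ge f(\ph(t))-\ep$. Hence $|f\circ\ph-g\circ\psi|\le\ep$ on $\tilde I$, and since $\LL^1(I\setminus\tilde I)\le\ep$, the set where the two functions differ by more than $\ep$ has measure at most $\ep$, which gives $\dKF(f\circ\ph,g\circ\psi)\le\ep$. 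By the symmetric construction starting from $g\in\Lip_1(Y)$, we obtain the reverse inclusion, proving $\dKFH(\ph^*\Lip_1(X),\psi^*\Lip_1(Y))\le\ep$.

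The main technical obstacle is the finiteness and Lipschitz regularity of $g$: a priori $f$ can be unbounded and the index set $\tilde I$ is only measurable, so one has to use the box inequality together with a base point to get the uniform lower bound above. Measurability of $g$ is automatic from the $1$-Lipschitz property. Once the bound $\dconc(X,Y)\le\ep$ is established for every $\ep>\Box(X,Y)$, the desired inequality follows, and the convergence statement $\Box\text{-convergence}\Rightarrow\text{concentration}$ is an immediate corollary.
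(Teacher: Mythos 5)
The paper does not prove this proposition; it is quoted directly from \cite[Proposition 5.5]{ShioyaMMG}. Your argument is correct and is essentially the standard proof of that result: you transport a $1$-Lipschitz function across the near-isometry provided by the box-distance parameters via a McShane-type inf-convolution $g(y)=\inf_{s\in\tilde I}(f(\ph(s))+d_Y(y,\psi(s)))$, and the base-point estimate you give does handle the finiteness of $g$ (for $\ep<1$ the set $\tilde I$ is nonempty, and for $\ep\ge 1$ the bound $\dKFH\le 1\le\ep$ is trivial, so nothing is lost). No gaps.
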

The topology on $\X$ induced by $\dconc$ is called the \emph{concentration topology}.
\begin{prop}[{\cite[Proposition 5.7, Corollary 5.8]{ShioyaMMG}}]
For any mm-space $X$, we have
\begin{equation*}
\dconc(X,*)\le \Obsdiam(X)\le 2\dconc(X,*).
\end{equation*}
In particular, a sequence $\{X_n\}_{n\in \N}$ of mm-spaces concentrates to $*$ if and only if $\{X_n\}_{n\in \N}$ is a L\'{e}vy family.
\end{prop}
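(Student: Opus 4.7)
The plan is to handle the two inequalities separately after first simplifying $\dconc(X,*)$. Since $\{*\}$ admits a unique parameter $\psi:I\to\{*\}$, the set $\psi^*\Lip_1(*)$ is exactly the set of constant functions $I\to\R$; and since constants also lie in $\ph^*\Lip_1(X)$ for any parameter $\ph$ of $X$, one side of the Hausdorff Ky Fan supremum is zero, so
\begin{equation*}
\dconc(X,*) \;=\; \inf_{\ph}\;\sup_{f\in\Lip_1(X)}\;\inf_{c\in\R}\dKF(f\circ\ph,\,c).
\end{equation*}
This collapsed formula is what makes both inequalities accessible.

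For $\dconc(X,*)\le\Obsdiam(X)$, I would fix $\al\in(0,1)$, pick any parameter $\ph$ of $X$, and set $r:=(1-\al)\lor\Obsdiam(X;\al)$. For each $f\in\Lip_1(X)$, the definition of the partial diameter (applied to $f_*\mu_X$, which is a Borel probability measure on $\R$) supplies a closed interval $[a,b]\subset\R$ with $b-a\le\Obsdiam(X;\al)$ and $f_*\mu_X([a,b])\ge\al$. Taking $c:=(a+b)/2$, the choice of $r$ gives both $r\ge(b-a)/2$, so that $\{f\circ\ph\in[a,b]\}\subset\{|f\circ\ph-c|\le r\}$, and $r\ge 1-\al\ge\LL^1(\{f\circ\ph\notin[a,b]\})$. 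Together these yield $\dKF(f\circ\ph,c)\le r$. Taking a supremum over $f$, then an infimum over $\al$, gives the first inequality.

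For $\Obsdiam(X)\le 2\dconc(X,*)$, fix $\ep>\dconc(X,*)$ and choose a parameter $\ph$ with $\dKFH(\ph^*\Lip_1(X),\psi^*\Lip_1(*))<\ep$. Then for every $f\in\Lip_1(X)$ there exists $c_f\in\R$ with $\dKF(f\circ\ph,c_f)\le\ep$, which unpacks to $f_*\mu_X([c_f-\ep,c_f+\ep])\ge 1-\ep$, hence $\diam(f_*\mu_X;1-\ep)\le 2\ep$. Uniformity in $f$ gives $\Obsdiam(X;1-\ep)\le 2\ep$, and evaluating the definition of $\Obsdiam(X)$ at $\al=1-\ep$ yields $\Obsdiam(X)\le\max(\ep,2\ep)=2\ep$. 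Letting $\ep\downarrow\dconc(X,*)$ concludes. The \emph{in particular} clause then follows: $\Obsdiam(X_n)\to 0$ iff $\dconc(X_n,*)\to 0$.

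The main thing to watch is keeping the two scales straight: the Ky Fan tolerance $\ep$ and the tail mass $1-\al$ play analogous roles but must be matched compatibly. The factor of $2$ in the second inequality comes precisely from converting a $\dKF$-radius $\ep$ about a center into an interval of diameter $2\ep$; correspondingly, in the first inequality the bound $r\ge(b-a)/2$ (rather than $r\ge b-a$) is what forces $r$ to control only half the interval, which is why both the mass part $1-\al$ and the diameter part $\Obsdiam(X;\al)$ of $(1-\al)\lor\Obsdiam(X;\al)$ are needed.
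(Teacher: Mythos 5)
The paper gives no proof of this proposition at all—it simply cites Shioya's book—and your argument is correct and is essentially the standard proof of the cited result: reduce $\dconc(X,*)$ to $\inf_{\ph}\sup_{f\in\Lip_1(X)}\inf_{c\in\R}\dKF(f\circ\ph,c)$ using that constants lie in $\ph^*\Lip_1(X)$, then trade the Ky Fan tolerance against partial-diameter mass in both directions. Two small touch-ups: in the first inequality you should take $[a,b]$ with $b-a\le\Obsdiam(X;\al)+\delta$ for arbitrary $\delta>0$ (or argue the infimum over intervals is attained), and in the second you need $\ep<1$ in order to plug $\al=1-\ep$ into this paper's definition of $\Obsdiam(X)$, which restricts $\al\in(0,1)$; since $\dKF\le 1$ gives $\dconc(X,*)\le 1$, you should either dispose of the borderline case $\dconc(X,*)=1$ (for instance, tightness of $\mu_X$ yields a compact $K$ of measure $\ge 1-\delta$ and finite diameter, so a pigeonhole argument gives $\Obsdiam(X;\al)\to 0$ as $\al\to 0$ and hence $\Obsdiam(X)<1$, making that case vacuous) or note that in Shioya's normalization the parameter is allowed to exceed $1$, which is why the issue is invisible there.
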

\begin{ex}[{\cite[Section 2]{ShioyaMMG}}]
\label{ex}
Let $S^n(r_n)$ be the $n$-dimensional sphere of radius $r_n>0$. We consider $S^n(r_n)$ with the Riemannian distance and the normalized Riemannian volume measure as an mm-space. Then, for any $\al\in (0,1)$,
\begin{equation*}
\lim_{n\to\infty}\Obsdiam(S^n(r_n);\al)=0\ \Leftrightarrow\ \lim_{n\to \infty}\frac{r_n}{\sqrt{n}}=0.
\end{equation*}
In particular, $\{S^n(1)\}_{n\in \N}$ is a L\'{e}vy family.
\end{ex}
\begin{rem}
\label{rem}
For any $\al\in (0,1)$,
\begin{equation*}
\liminf_{n\to \infty}\diam(S^n(1);\al)\ge \frac{\pi}{2}.
\end{equation*}
In particular, $S^n(1)$ does not $\Box$-converge to $*$ by Proposition \ref{box*}. 

Moreover, \cite[Corollary 5.20]{ShioyaMMG} says that $\{S^n(1)\}_{n\in \N}$ has no $\Box$-convergent subsequence.
\end{rem}
\begin{prop}[{\cite[Section 4.4]{ShioyaMMG}}]
\label{findimappro}
Let $X$ be an mm-space. For any $m\in \N$, there exists $\umu_m\in \P(\R^m)$ such that
\begin{align*}
X_1\prec X_2\prec\cdots\prec X_m\prec\cdots \prec X\text{\qquad and \qquad} X_m\Boxto X\qquad(m\to\infty),
\end{align*}
where $X_m:=(\R^m,\|\cdot\|_{\infty},\umu_m)$.
\end{prop}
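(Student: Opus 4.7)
The plan is to realize each $X_m$ as the image of $X$ under a Kuratowski-type distance embedding into $(\R^m,\|\cdot\|_\infty)$. Concretely, since $(X,d_X)$ is complete separable, so is $\supp\mu_X$, and I would fix a sequence $\{x_i\}_{i=1}^\infty\subset \supp\mu_X$ that is dense in $\supp\mu_X$. For each $m\in\N$ define
\begin{equation*}
f_m:X\to\R^m,\qquad f_m(x):=(d_X(x,x_1),\ldots,d_X(x,x_m)),
\end{equation*}
which is $1$-Lipschitz when $\R^m$ carries $\|\cdot\|_\infty$ because the triangle inequality gives $|d_X(x,x_i)-d_X(y,x_i)|\le d_X(x,y)$ for every $i$. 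Setting $\umu_m:=(f_m)_*\mu_X$, the space $X_m=(\R^m,\|\cdot\|_\infty,\umu_m)$ is well defined and $X_m\prec X$ is realized by $f_m$.

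The chain relation $X_m\prec X_{m+1}$ would follow from the observation that the coordinate projection $\pi_m:\R^{m+1}\to\R^m$ onto the first $m$ coordinates is $1$-Lipschitz with respect to $\|\cdot\|_\infty$ and satisfies $\pi_m\circ f_{m+1}=f_m$, so $(\pi_m)_*\umu_{m+1}=\umu_m$. For the box convergence $X_m\Boxto X$, fix $\ep>0$, set $\delta:=\ep/3$, and write $A_m:=\bigcup_{i=1}^m B_X(x_i,\delta)$. Density of $\{x_i\}$ in $\supp\mu_X$ gives $\bigcup_{m=1}^\infty A_m\supset\supp\mu_X$, so by continuity of measure there exists $M$ with $\mu_X(A_m)\ge 1-\ep$ whenever $m\ge M$. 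Choosing a parameter $\ph:I\to X$ of $X$, the map $f_m\circ\ph$ is then a parameter of $X_m$. Put $\tilde I:=\ph^{-1}(A_m)$, so $\LL^1(\tilde I)\ge 1-\ep$. For $s,t\in\tilde I$, pick an index $i\le m$ with $d_X(\ph(s),x_i)<\delta$; the reverse triangle inequality yields $d_X(\ph(t),x_i)-d_X(\ph(s),x_i)\ge d_X(\ph(s),\ph(t))-2\delta$, which combined with the $1$-Lipschitz upper bound gives
\begin{equation*}
\bigl|d_X(\ph(s),\ph(t))-\|f_m(\ph(s))-f_m(\ph(t))\|_\infty\bigr|\le 2\delta\le\ep.
\end{equation*}
Hence $\Box(X,X_m)\le\ep$ for $m\ge M$, proving $X_m\Boxto X$.

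The only subtle point, and the step that I expect to be the main obstacle, is that $\{x_i\}$ must be chosen dense in $\supp\mu_X$ rather than merely in $X$; this is what allows the increasing sets $A_m$ to have $\mu_X$-measure tending to $1$, which in turn forces $\LL^1(\tilde I)$ close to $1$. Once that density is secured, the rest of the argument is routine bookkeeping with the triangle inequality and the definition of $\Box$.
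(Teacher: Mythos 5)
Your construction is correct and is essentially the same as the one the paper relies on: the cited argument in \cite{ShioyaMMG}*{Section 4.4} also takes a dense sequence in $\supp\mu_X$, uses the Kuratowski-type map $x\mapsto(d_X(x,x_1),\ldots,d_X(x,x_m))$ into $(\R^m,\|\cdot\|_\infty)$, and pushes $\mu_X$ forward, with the chain $X_m\prec X_{m+1}\prec X$ coming from the coordinate projections exactly as you describe (this compatibility $f_m=p_m\circ f_{m+1}$ is precisely what the paper later uses in Lemma \ref{lemC}). Your verification of $X_m\Boxto X$ via the parameter $f_m\circ\ph$ and the set $\tilde I=\ph^{-1}(A_m)$ is a valid way to conclude.
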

\begin{dfn}[$\ep$-mm-Isomorphism] 
Let $X$ and $Y$ be two mm-spaces and let $\ep \ge 0$. We say that a Borel map $f:X\to Y$ is an \emph{$\ep$-mm-isomorphism} if there exists a Borel subset $\tX \subset X$ such that
\begin{enumerate}
\item $\mu_X(\tX) \ge 1-\ep$,
\item $|d_X(x,y) - d_Y(f(x),f(y))| \le \ep$\qquad for any $x,y \in \tX$,
\item $d_P(f_*\mu_X,\mu_Y) \le \ep$.
\end{enumerate}
\end{dfn}
\begin{prop}[{\cite[Lemma 4.22]{ShioyaMMG}}]
\label{epmm}
Let $X$ and $Y$ be two mm-spaces and let $\ep \ge 0$.
\begin{enumerate}
\item If there is an $\ep$-mm-isomorphism $f:X\to Y$, then $\Box(X,Y) \le 3\ep$.
\item If $\Box(X,Y) < \ep$, then there exists a $3\ep$-mm-isomorphism $f:X\to Y$.
\end{enumerate}
In particular, $X_N$ $\Box$-converges to $Y$ if and only if there exist $\ep_N\searrow 0$ and an $\ep_N$-mm-isomorphism $f_N:X_N\to Y$.
\end{prop}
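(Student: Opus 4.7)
The plan is to prove the two directions of the proposition separately; direction (2) holds the main obstacle.

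For direction (1), given an $\ep$-mm-isomorphism $f:X\to Y$ with exceptional set $\tilde{X}$, I would fabricate witnessing parameters for the box distance. Fix any parameter $\ph:I\to X$; then $f\circ\ph:I\to Y$ is Borel with $(f\circ\ph)_*\LL^1=f_*\mu_X$, and the bound $d_P(f_*\mu_X,\mu_Y)\le\ep$ provides, via the coupling characterization of the Prokhorov metric, a parameter $\psi:I\to Y$ of $Y$ with $\LL^1(\{t:d_Y(f(\ph(t)),\psi(t))>\ep\})\le\ep$. Setting $\tilde{I}:=\ph^{-1}(\tilde{X})\cap\{t:d_Y(f(\ph(t)),\psi(t))\le\ep\}$ yields $\LL^1(\tilde{I})\ge 1-2\ep$, and for $s,t\in\tilde{I}$ the triangle inequality
\[
|d_X(\ph(s),\ph(t))-d_Y(\psi(s),\psi(t))|\le\ep+d_Y(f(\ph(s)),\psi(s))+d_Y(f(\ph(t)),\psi(t))\le 3\ep
\]
gives $\Box(X,Y)\le 3\ep$.

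For direction (2), assuming $\Box(X,Y)<\ep$, pick witnessing parameters $\ph,\psi$ and $\tilde{I}\subset I$, and form the coupling $\pi:=(\ph,\psi)_*\LL^1\in\Cpl(\mu_X,\mu_Y)$. The set $J:=(\ph,\psi)(\tilde{I})\subset X\times Y$ carries mass $\pi(J)\ge 1-\ep$ and has the two-point property $|d_X(x,x')-d_Y(y,y')|\le\ep$ for $(x,y),(x',y')\in J$; in particular each section $J_x:=\{y:(x,y)\in J\}$ has diameter $\le\ep$. Apply a measurable selection theorem to the multifunction $x\mapsto J_x$ to obtain a Borel map $f$ with $f(x)\in J_x$ on $\tilde{X}:=\{x:J_x\ne\emptyset\}$, extended arbitrarily to all of $X$. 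Disintegrating $\pi=\mu_X\otimes\pi_x$ gives $\mu_X(\tilde{X})\ge 1-\ep$, and the metric distortion $\le\ep$ on $\tilde X$ is immediate from the two-point property.

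The Prokhorov bound requires a short calculation: for any Borel $B\subset Y$, since $\diam J_x\le\ep$ and $f(x)\in J_x$, any $y\in B\cap J_x$ forces $f(x)\in N_\ep(B)$, so $\pi_x(B\cap J_x)\le\1_{N_\ep(B)}(f(x))$, which integrates to
\[
\mu_Y(B)\le f_*\mu_X(N_\ep(B))+\pi(J^c)\le f_*\mu_X(N_\ep(B))+\ep,
\]
and a symmetric computation yields $d_P(f_*\mu_X,\mu_Y)\le 2\ep\le 3\ep$. The main obstacle is descriptive-set-theoretic: $J$ is only analytic, so both the selection and the disintegration must be handled carefully (e.g.\ by passing to a Borel subset of $J$ of $\pi$-mass still close to $1-\ep$), which costs some slack in the constants but stays comfortably within $3\ep$. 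The ``in particular'' clause then follows immediately by applying (1) and (2) along any sequence with $\ep_N\searrow 0$.
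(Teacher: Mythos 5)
The paper does not prove this proposition at all: it is imported verbatim as \cite[Lemma 4.22]{ShioyaMMG}, so there is no in-paper proof to compare against. Your argument is essentially the standard proof of that cited lemma and is correct in both directions, with the constants working out ($1-2\ep$ mass and $3\ep$ distortion in (1); $\ep$ distortion and roughly $2\ep$ Prokhorov error in (2)); the only real work you defer --- realizing the near-optimal coupling by a parameter $\psi$ compatible with $f\circ\ph$ in (1), and the measurable selection from the analytic set $J$ after passing to a compact subset of nearly full $\pi$-mass in (2) --- is standard and correctly flagged, and the strict inequality $\Box(X,Y)<\ep$ leaves the slack needed to absorb it.
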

\begin{prop}[cf. {\cite[Lemma 5.4]{ShioyaMMG}}]
\label{Lipappro}
Let $X$ be an mm-space and let $\ep>0$. If a Borel map $f:X\to \R$ satisfies that there exists a Borel subset $X_0\subset X$ such that 
\begin{enumerate}
\item $\mu_X(X_0) \ge 1-\ep$,
\item $d_Y(f(x),f(y)) \le d_X(x,y) + \ep$\qquad for any $x,y \in X_0$,
\end{enumerate}
then there exists a $1$-Lipschitz function $\tilde{f}:X\to \R$ such that $\dKF(f,\tilde{f})\le \ep$.
\end{prop}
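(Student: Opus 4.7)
The plan is to apply a McShane-type extension to $f|_{X_0}$, where $f$ is $\ep$-almost $1$-Lipschitz. Concretely, I would define
\begin{equation*}
\tilde{f}(x) := \inf_{y\in X_0}\bigl(f(y) + d_X(x,y)\bigr), \qquad x\in X.
\end{equation*}
Since $\tilde f$ is an infimum of a family of honestly $1$-Lipschitz functions in $x$, with modulus of continuity controlled uniformly by $|d_X(x,y)-d_X(x',y)|\le d_X(x,x')$, it will itself be $1$-Lipschitz on $X$ (hence continuous and Borel measurable) as soon as it is everywhere finite. To secure finiteness, I would dispose of the trivial case $\ep\ge 1$ first (where any function, say $\tilde f\equiv 0$, satisfies $\dKF(f,\tilde f)\le 1\le \ep$) and then assume $\ep<1$, so that $\mu_X(X_0)>0$ and in particular $X_0\neq\emptyset$; fix any $y_0\in X_0$. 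Taking $y=y_0$ in the infimum yields the upper bound $\tilde f(x)\le f(y_0)+d_X(x,y_0)<\infty$; for the lower bound, hypothesis (2) applied to $y_0,y\in X_0$ gives $f(y)\ge f(y_0)-d_X(y_0,y)-\ep$, and then the triangle inequality yields $f(y)+d_X(x,y)\ge f(y_0)-d_X(x,y_0)-\ep$, so $\tilde f(x)>-\infty$.

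Next I would compare $\tilde f$ with $f$ pointwise on $X_0$. Taking $y=x\in X_0$ in the infimum gives $\tilde f(x)\le f(x)$, while the almost-Lipschitz hypothesis $f(x)-f(y)\le d_X(x,y)+\ep$ for every $y\in X_0$ rearranges to $f(y)+d_X(x,y)\ge f(x)-\ep$, hence $\tilde f(x)\ge f(x)-\ep$. Combining these,
\begin{equation*}
\{x\in X \mid |\tilde f(x)-f(x)|>\ep\}\subset X\setminus X_0,
\end{equation*}
and the right-hand side has $\mu_X$-measure at most $\ep$. By the very definition of the Ky Fan metric this gives $\dKF(f,\tilde f)\le \ep$, finishing the argument.

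The only delicate point is ensuring that the McShane formula returns a finite value, which is why the hypothesis is applied twice: once to bound $f$ from below on $X_0$ in terms of the fixed reference point $y_0$, and once to extract the pointwise bound $|\tilde f - f|\le \ep$ on $X_0$. Everything else — Borel measurability, the $1$-Lipschitz property, and the passage from a pointwise bound on a set of large measure to the Ky Fan estimate — is formal.
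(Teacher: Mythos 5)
Your proof is correct, and it is essentially the argument behind the cited result \cite[Lemma 5.4]{ShioyaMMG}, which the paper itself quotes without proof: the McShane-type extension $\tilde f(x)=\inf_{y\in X_0}(f(y)+d_X(x,y))$ of $f|_{X_0}$, the pointwise bound $f-\ep\le\tilde f\le f$ on $X_0$, and the inclusion $\{|f-\tilde f|>\ep\}\subset X\setminus X_0$ giving $\dKF(f,\tilde f)\le\ep$. Your handling of the degenerate case $\ep\ge 1$ (to guarantee $X_0\ne\emptyset$ and hence finiteness of the infimum) is a sensible and correct addition.
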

\begin{prop}
\label{mmsqueeze}
Let $\{X_n\}_{n\in \N}$, $\{Y_n\}_{n\in \N}$, and $\{Z_n\}_{n\in \N}$ be three sequences of mm-spaces satisfying $X_n\prec Y_n\prec Z_n$ for any $n\in \N$. If $X_n$ and $Z_n$ $\Box$-converge to some mm-space $X$ as $n\to\infty$, then $Y_n$ also $\Box$-converges to $X$ as $n\to\infty$.
\end{prop}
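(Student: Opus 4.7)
I would attempt a pre-compactness and closure-of-order argument rather than a direct parameter computation. The strategy is to show that $\{Y_n\}$ is $\Box$-precompact and that every subsequential $\Box$-limit is forced to be $X$ by a pincer in the Lipschitz order.

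First, I would establish that $\{Y_n\}$ is $\Box$-precompact. Since $Y_n\prec Z_n$ and the sequence $\{Z_n\}$ is $\Box$-convergent, hence $\Box$-compact, Proposition \ref{Obsdiamonotone}(1) gives $\diam(Y_n;\al)\le\diam(Z_n;\al)$, providing uniform control of the partial diameters. Combined with a tightness condition inherited from the Lipschitz domination $Y_n\prec Z_n$, Gromov's compactness criterion for mm-spaces yields precompactness. Thus every subsequence of $\{Y_n\}$ admits a further subsequence $Y_{n_k}\Boxto Y^*$ for some $Y^*\in\X$.

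Second, I would pass to the limit using the closure of the Lipschitz order under $\Box$-convergence. From $X_{n_k}\prec Y_{n_k}$ with $X_{n_k}\Boxto X$ and $Y_{n_k}\Boxto Y^*$, one obtains $X\prec Y^*$; symmetrically, from $Y_{n_k}\prec Z_{n_k}$ with $Y_{n_k}\Boxto Y^*$ and $Z_{n_k}\Boxto X$, one gets $Y^*\prec X$. Antisymmetry of the Lipschitz order on $\X$ then forces $Y^*=X$ in $\X$. Since every $\Box$-convergent subsequence of $\{Y_n\}$ converges to the same limit $X$ and $\{Y_n\}$ is precompact, I conclude $Y_n\Boxto X$.

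The main obstacle is the two ``external'' facts on which this reduction depends: the $\Box$-precompactness of $\{Y\in\X:Y\prec Z\text{ for some }Z\in K\}$ whenever $K\subset\X$ is $\Box$-compact, and the closure of $\prec$ under $\Box$-convergence (which requires extracting a limiting dominating map, e.g.\ via compactness of couplings or via the $\ep$-mm-isomorphisms of Proposition \ref{epmm}). A more direct parameter-based attempt---fixing a parameter $\chi_n$ of $Z_n$ and taking the induced parameters $f_n\circ\chi_n$ of $Y_n$ and $g_n\circ f_n\circ\chi_n$ of $X_n$, where $f_n\colon Z_n\to Y_n$ and $g_n\colon Y_n\to X_n$ are dominating maps---would run into the difficulty that the parameters realizing $\Box(X_n,Z_n)\to 0$ in the definition need not be compatible with the dominating-map-induced ones, and the measure-preserving rearrangement relating them typically destroys the distance coupling.
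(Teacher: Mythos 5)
Your proposal is correct and follows essentially the same route as the paper: precompactness of $\{Y_n\}$ from $Y_n\prec Z_n$ and the convergence of $\{Z_n\}$, subsequential limits $Y^*$ pinched by $X\prec Y^*\prec X$ via stability of the Lipschitz order under $\Box$-convergence, and antisymmetry plus the standard subsequence argument. The two ``external facts'' you flag are exactly the results the paper invokes from Shioya's book (Lemma 4.28 for precompactness under domination, Theorem 4.35 for closedness of $\prec$, and Proposition 2.11 for antisymmetry), so no further work is needed there.
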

\begin{proof}
$Z_n\Boxto X$ and $Y_n\prec Z_n$ imply that $\{Y_n\}_{n\in \N}$ is $\Box$-precompact by {\cite[Lemma 4.28]{ShioyaMMG}}. Hence, for any subsequence $\{Y_{n_k}\}_{k\in \N}$ of $\{Y_n\}_{n\in \N}$, there exist a subsequence $\{Y_{n_k'}\}_{k\in \N}$ of $\{Y_{n_k}\}_{k\in \N}$ and an mm-space $Y$ such that $Y_{n_k'}\Boxto Y$ as $k\to \infty$. Then, since $X_{n_k'}$ and $Z_{n_k'}$ $\Box$-converge to $X$, and $X_{n_k'}\prec Y_{n_k'}\prec Z_{n_k'}$, it follows from {\cite[Theorem 4.35]{ShioyaMMG}} that $X\prec Y\prec X$. Thus, $X$ is mm-isomorphic to $Y$ by {\cite[Proposition 2.11]{ShioyaMMG}}. Thus, since any subsequence of $\{Y_n\}_{n\in \N}$ has a subsequence $\Box$-converging to $X$, $Y_n$ $\Box$-converges to $X$.
\end{proof}
\begin{dfn}
Let $(X,d_X)$ be a metric space and let $\F(X)$ be the set of all closed subsets of $X$. We say that a sequence $\{A_n\}_{n\in \N}$ in $\F(X)$ \emph{converges weakly to $A\in \F(X)$} if the following (1) and (2) are satisfied.
\begin{enumerate}
\item For any $x\in A$,
\begin{equation*}
\lim_{n\to\infty}d_X(x,A_n)=0.
\end{equation*}
\item For any $x\in X\setminus A$,
\begin{equation*}
\liminf_{n\to\infty}d_X(x,A_n)>0.
\end{equation*}
\end{enumerate}
Here, we define $d_X(x,\emptyset):=\infty$ for any $x\in X$.
\end{dfn}
\begin{prop}[{\cite[Lemma 6.6]{ShioyaMMG}}]
\label{wHsub}
If $(X,d_X)$ is separable, then any sequence in $\F(X)$ has a weakly convergent subsequence.
\end{prop}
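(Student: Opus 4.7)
The plan is to mimic an Arzel\`a--Ascoli-type argument applied to the $1$-Lipschitz distance functions $f_n(x) := d_X(x, A_n)$: extract a subsequence along which they converge pointwise on a countable dense subset, upgrade to pointwise convergence on all of $X$ by Lipschitzness, and take the zero set of the limit to be the candidate weak limit $A$.

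Concretely, I first fix a countable dense $\{x_k\}_{k\in\N}\subset X$ (using separability) and, since $[0,\infty]$ is sequentially compact, extract by a Cantor diagonal argument a subsequence $\{A_{n_j}\}$ such that $f_{n_j}(x_k)$ converges in $[0,\infty]$ for every $k\in\N$. Next, for arbitrary $x\in X$ and any dense $x_k$, the $1$-Lipschitz estimate $|f_{n_j}(x) - f_{n_j}(x_k)| \le d_X(x, x_k)$ forces $\limsup_j f_{n_j}(x)$ and $\liminf_j f_{n_j}(x)$ to each lie within $d_X(x, x_k)$ of the already-known limit $\lim_j f_{n_j}(x_k)$; sending $x_k\to x$ the $\limsup$ and $\liminf$ coincide, so the pointwise limit $\varphi(x):=\lim_j f_{n_j}(x)$ exists on all of $X$ and is itself $1$-Lipschitz. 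I then set $A:=\varphi^{-1}(0)$, which is closed. Condition (1) of weak convergence reduces to $\varphi(x)=0$ for $x\in A$, and condition (2) to $\varphi(x)>0$ for $x\in X\setminus A$; both are immediate from the definition of $A$.

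I expect the only delicate point to be the bookkeeping around the value $+\infty$: if $\varphi\equiv\infty$ (which can happen when the $A_n$ are empty or escape to infinity in an unbounded $X$), then $A=\emptyset$, condition (1) is vacuous, and condition (2) holds because $\liminf_j d_X(x, A_{n_j}) = \infty > 0$ for every $x\in X$. This case is consistent with the stated convention $d_X(x,\emptyset):=\infty$ and requires no extra argument, but should be flagged since otherwise one might be tempted to exclude it or worry about $\emptyset\in\F(X)$.
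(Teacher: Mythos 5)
Your proof is correct: this proposition is not proved in the paper but simply quoted from Shioya's book (Lemma 6.6 there), and your argument via the $1$-Lipschitz distance functions $d_X(\cdot,A_n)$, a Cantor diagonal extraction over a countable dense set, and taking $A:=\varphi^{-1}(0)$ is essentially the standard proof of that cited lemma. Your bookkeeping around $+\infty$ is also adequate; the only point worth adding is that the extended-valued limit $\varphi$ cannot be finite at one point and infinite at another (this follows from $\varphi(x)\le\varphi(y)+d_X(x,y)$, valid in $[0,\infty]$), so the dichotomy you use between the everywhere-finite $1$-Lipschitz case and the case $\varphi\equiv\infty$, $A=\emptyset$ is indeed exhaustive, and the same extended-valued inequality already gives closedness of $A$ in all cases.
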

\begin{prop}[cf. {\cite[Lemma 6.4]{Nakajima-Shioya}}]
\label{wHconti}
Let $(X,d_X)$ be a complete separable metric space, let $\{A_n\}_{n\in \N}$ be a sequence in $\F(X)$, and let $\{\mu_n\}_{n\in \N}$ be a sequence in $\P(X)$. If $A_n$ converges weakly to $A\in \F(X)$ and $\mu_n$ converges weakly to $\mu\in \P(X)$, then we have
\begin{align*}
\diam A&\le \liminf_{n\to \infty}\diam A_n,\\
\mu(A)&\ge \limsup_{n\to\infty}\mu_n(A_n).
\end{align*}
\end{prop}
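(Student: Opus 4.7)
The diameter inequality is straightforward and I would dispatch it first. If $A=\emptyset$, the inequality is trivial (interpreting $\diam\emptyset=0$), so assume $A\neq\emptyset$. Given $x,y\in A$, condition (1) of weak convergence forces $d_X(x,A_n),d_X(y,A_n)\to 0$, so in particular $A_n\neq\emptyset$ eventually, and I can choose $x_n,y_n\in A_n$ with $d_X(x,x_n),d_X(y,y_n)\to 0$. The triangle inequality then gives $\diam A_n\ge d_X(x_n,y_n)\ge d_X(x,y)-d_X(x,x_n)-d_X(y,y_n)$, so $\liminf_{n\to\infty}\diam A_n\ge d_X(x,y)$, and taking the supremum over $x,y\in A$ finishes this half.

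For the measure inequality the idea is to combine the tightness of $\{\mu_n\}$ with the ``no-escape'' condition (2) of the weak convergence of $A_n$. Fix $\de>0$. Since $\mu_n\to\mu$ weakly in $\P(X)$ with $X$ complete separable, Prokhorov's theorem gives tightness: there is a compact $K\subset X$ with $\mu_n(K)\ge 1-\de$ for every $n$. I would then write $\mu_n(A_n)\le \mu_n(A_n\cap K)+\de$ and seek to push $A_n\cap K$ into a small neighborhood of $A$.

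The key claim, and the main obstacle, is the following: for every $\ep>0$ there exists $N$ such that $A_n\cap K\subset N_\ep(A)$ for all $n\ge N$. I would prove it by contradiction. A failure produces a subsequence $n_k\to\infty$ and points $x_{n_k}\in A_{n_k}\cap K$ with $d_X(x_{n_k},A)\ge\ep$. By compactness of $K$, a further subsequence converges to some $x\in K$. Since $A$ is closed, $d_X(x,A)\ge\ep$ forces $x\notin A$, so condition (2) demands $\liminf_{n\to\infty}d_X(x,A_n)>0$. On the other hand, $d_X(x,A_{n_k})\le d_X(x,x_{n_k})\to 0$ along the chosen subsequence, a contradiction. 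This step is where the argument really uses both ingredients: tightness to trap points inside a compact set, and the weak convergence of the closed sets to prevent accumulation away from $A$.

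With the claim in hand, for $n\ge N$ I get $\mu_n(A_n)\le \mu_n(N_\ep(A))+\de\le \mu_n(\overline{N_\ep(A)})+\de$. The Portmanteau theorem, applied to the closed set $\overline{N_\ep(A)}$, yields $\limsup_{n\to\infty}\mu_n(\overline{N_\ep(A)})\le \mu(\overline{N_\ep(A)})$, hence $\limsup_{n\to\infty}\mu_n(A_n)\le \mu(\overline{N_\ep(A)})+\de$. Finally, since $A$ is closed, $\bigcap_{\ep>0}\overline{N_\ep(A)}=A$, so by continuity of $\mu$ from above, $\mu(\overline{N_\ep(A)})\downarrow \mu(A)$ as $\ep\downarrow 0$. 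Letting $\ep\to 0$ and then $\de\to 0$ gives $\limsup_{n\to\infty}\mu_n(A_n)\le \mu(A)$, which completes the proof.
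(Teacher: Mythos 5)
Your proof is correct. Note that the paper itself gives no argument for this proposition; it is quoted from Nakajima--Shioya, so there is no in-text proof to compare against. Your argument is the natural one and all steps check out: the diameter bound by approximating two points of $A$ from $A_n$ via condition (1); tightness of $\{\mu_n\}_{n\in\N}$ from Prokhorov's theorem (this is where completeness and separability of $X$ are used); the key claim that $A_n\cap K\subset N_\ep(A)$ for large $n$, proved by extracting a convergent subsequence in $K$ and contradicting condition (2) since $d_X(x,A_{n_k})\le d_X(x,x_{n_k})\to 0$; and then the Portmanteau inequality for the closed set $\overline{N_\ep(A)}$ together with $\bigcap_{\ep>0}\overline{N_\ep(A)}=A$ (valid because $A$ is closed, as $\overline{N_\ep(A)}\subset\{x\mid d_X(x,A)\le\ep\}$) and continuity of $\mu$ from above. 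The degenerate case $A=\emptyset$ is also handled consistently with the paper's convention $d_X(x,\emptyset)=\infty$. One cosmetic remark: closedness of $A$ is not needed to conclude $x\notin A$ from $d_X(x,A)\ge\ep$ (that implication is automatic); closedness only enters in the final intersection step.
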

\section{multivariable observable diameter of mm-spaces}
\label{multi}

In this section, we define the multivariable partial (observable) diameter of mm-spaces and consider their properties.
\begin{dfn}
We define $\A$ as 
\begin{align*}
\A&:=\bigcup_{n\in \N\cup\{\infty\}}(0,\infty)^n,
\end{align*}
where $(0,\infty)^{\infty}=(0,\infty)^{\N}$. For $\bal\in \A$, $n(\bal)$ is a unique element in $\N\cup\{\infty\}$ with $\bal\in (0,\infty)^{n(\bal)}$. For $n\in \N\cup\{\infty\}$, $\N_n$ is defined as
\begin{equation*}
\N_n:=\N\cap [1,n].
\end{equation*}
\end{dfn}
In the following, $n(\bal)$ for $\bal\in \A$ is denoted by $n$ unless otherwise noted. 
\begin{dfn}[multivariable observable diameter]
Let $X$ be an mm-space and let $\bal\in \A$. We define the \emph{$\bal$-partial diameter $\diam(X;\bal)$, $\udiam(X;\bal)$ of $X$}, and the \emph{$\bal$-observable diameter $\uObsdiam(X;\bal)$ of $X$} by
\begin{align*}
\diam(X;\bal)&:=
\inf\left\{\left.\sup_{i\in \N_n}\diam A_i\ \right| \ \{A_i\}_{i=1}^n\in D_X(\bal)\right\},\\
\udiam(X;\bal)&:=\inf_{Y\succ X}\diam(Y;\bal),\\
\uObsdiam(X;\bal)&:=\sup\{\udiam((\R, f_*\mu_X);\bal)\mid f\in \Lip_1(X)\},
\end{align*}
where $D_X(\bal)$ is the set of families $\{A_i\}_{i=1}^n$ of disjoint Borel subsets of $X$ with $\mu_X(A_i)\ge \al_i$ for any $i\in \N_n$. In addition, we define $\diam(X;\bal):=\infty$ if $D_X(\bal)=\emptyset$.
\end{dfn}
\begin{rem}
In Section \ref{andef}, we introduce definitions of the multivariable partial and observable diameters for the case where $\diam(X;\bal):=0$ if $D_X(\bal)=\emptyset$.
\end{rem}
These multivariable partial and observable diameters have monotonicity similar to Proposition \ref{Obsdiamonotone}.
\begin{prop}[cf. {\cite[Proposition 2.18]{ShioyaMMG}}]
\label{multiobsdiamonotone}
Let $X$ and $Y$ be two mm-spaces and let $\bal\in \A$.
\begin{enumerate}
\item If $X$ is dominated by $Y$, then
\begin{equation*}
\udiam(X;\bal) \le \udiam(Y;\bal).
\end{equation*}
\item We have
\begin{equation*}
\uObsdiam(X;\bal) \le \udiam(X;\bal).
\end{equation*}
\item If $X$ is dominated by $Y$, then
\begin{equation*}
\uObsdiam(X;\bal) \le \uObsdiam(Y;\bal).
\end{equation*}
\end{enumerate}
\end{prop}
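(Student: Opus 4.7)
The plan is to mimic the single-variable argument of Proposition \ref{Obsdiamonotone}, leaning on transitivity of the Lipschitz order $\prec$ together with the elementary observation that every $f\in\Lip_1(X)$ exhibits $(\R,f_*\mu_X)$ as an mm-space dominated by $X$. No analysis of the multivariable $\diam$ beyond its appearance in the definition of $\udiam$ will be needed, since all three inequalities are really statements about how the infimum and supremum in the definitions behave under set inclusions of index sets.

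For (1), suppose $X\prec Y$. The composition of dominating maps is dominating, so any mm-space $Z$ with $Z\succ Y$ also satisfies $Z\succ X$. Hence the set of competitors in the definition of $\udiam(Y;\bal)$ is contained in that for $\udiam(X;\bal)$, and taking infima gives $\udiam(X;\bal)\le\udiam(Y;\bal)$. For (2), fix any $f\in\Lip_1(X)$. The map $f\colon X\to\R$ is $1$-Lipschitz and pushes $\mu_X$ to $f_*\mu_X$, so $(\R,f_*\mu_X)\prec X$. Applying (1) to this pair yields $\udiam((\R,f_*\mu_X);\bal)\le\udiam(X;\bal)$, and taking the supremum over $f\in\Lip_1(X)$ produces $\uObsdiam(X;\bal)\le\udiam(X;\bal)$.

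For (3), assume $X\prec Y$ and pick a dominating map $g\colon Y\to X$. For any $f\in\Lip_1(X)$, the composition $f\circ g$ belongs to $\Lip_1(Y)$ and satisfies $(f\circ g)_*\mu_Y=f_*(g_*\mu_Y)=f_*\mu_X$, so $(\R,f_*\mu_X)=(\R,(f\circ g)_*\mu_Y)$. Therefore each term appearing in the supremum defining $\uObsdiam(X;\bal)$ also appears in the one defining $\uObsdiam(Y;\bal)$, and taking suprema yields the claim. I do not anticipate any real obstacle; this is a routine monotonicity statement whose proof copies the single-variable case almost verbatim, the only ingredients being transitivity of $\prec$ and the functorial identity $f_*(g_*\mu_Y)=(f\circ g)_*\mu_Y$.
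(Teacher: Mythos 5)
Your proof is correct: transitivity of the Lipschitz order gives (1) by inclusion of the competitor sets in the infimum defining $\udiam$, (1) applied to $(\R,f_*\mu_X)\prec X$ gives (2), and the identity $(f\circ g)_*\mu_Y=f_*(g_*\mu_Y)=f_*\mu_X$ for a dominating map $g$ gives (3). The paper states this proposition without proof precisely because it is this routine adaptation of the single-variable argument of \cite[Proposition 2.18]{ShioyaMMG}, which is exactly what you carried out.
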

\begin{rem}
If $n=1$, then we have $\udiam(X;\bal)=\diam(X;\bal)$. In particular, $\diam(X;\bal)$ is monotone with respect to the Lipschitz order. However, if $n\ge 2$, then $\diam(X;\bal)$ does not have the monotonicity with respect to the Lipschitz order. $\udiam(X;\bal)$ is a modification of $\diam(X;\bal)$ to obtain the monotonicity with respect to the Lipschitz order. Furthermore, if we define $\Obsdiam(X;\bal)$ as 
\begin{equation*}
\Obsdiam(X;\bal):=\sup\{\diam((\R, f_*\mu_X);\bal)\mid f\in \Lip_1(X)\}
\end{equation*}
in the same way as for $n=1$, then $\Obsdiam(X;\bal)$ is identically equal to infinity when $n\ge 2$ by $\diam((\R, f_*\mu_X);\bal)=\infty$ if $f$ is a constant function.
\end{rem}
\begin{dfn}
For an mm-space $X$ and $\bal\in \A$, we define $\underline{D_X}(\bal)$ as the set of triples $(Y,F,\{A_i\}_{i=1}^n)$ of an mm-space $Y$ dominating $X$, a dominating map $F:Y\to X$, and $\{A_i\}_{i=1}^n\in D_Y(\bal)$.
\end{dfn}
\begin{rem}
\label{diamrem}
For an mm-space $X$ and $\bal\in \A$, we have
\begin{equation*}
\udiam(X;\bal)=\inf\left\{\left.\sup_{i\in \N_n}\diam A_i\ \right|\  (Y,F,\{A_i\}_{i=1}^n)\in \underline{D_X}(\bal)\right\}.
\end{equation*}
\end{rem}
\begin{dfn}
For an mm-space $X$ and $\bal\in \A$, we define $\M_X(\bal)$ as the set of a family $\{\mu_i\}_{i=1}^n$ of Borel probability measures on $X$ such that 
\begin{equation*}
\mu_X-\sum_{i=1}^n\al_i\mu_i
\end{equation*}
is a non-negative measure on $X$.
\end{dfn}
\begin{prop}
For an mm-space $X$ and $\bal\in \A$, we have
\begin{equation*}
\udiam(X;\bal)=\inf\left\{\left.\sup_{i\in \N_n}\diam \supp \mu_i\ \right|\ \{\mu_i\}_{i=1}^n\in \M_X(\bal) \right\}.
\end{equation*}
\end{prop}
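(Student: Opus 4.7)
My plan is to prove both inequalities, using the reformulation of $\udiam(X;\bal)$ given by Remark \ref{diamrem} as an infimum over triples $(Y,F,\{A_i\}_{i=1}^n) \in \underline{D_X}(\bal)$.

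For the direction $\udiam(X;\bal) \ge \inf\{\sup_{i\in \N_n}\diam \supp \mu_i \mid \{\mu_i\} \in \M_X(\bal)\}$, I will start from an arbitrary triple $(Y,F,\{A_i\}) \in \underline{D_X}(\bal)$ and define
\begin{equation*}
\mu_i := \frac{1}{\mu_Y(A_i)}\, F_*(\mu_Y|_{A_i}) \in \P(X).
\end{equation*}
Pairwise disjointness of the $A_i$ together with $F_*\mu_Y = \mu_X$ gives
\begin{equation*}
\sum_{i=1}^n \al_i \mu_i \le \sum_{i=1}^n F_*(\mu_Y|_{A_i}) = F_*\!\left(\mu_Y|_{\bigcup_i A_i}\right) \le \mu_X,
\end{equation*}
so $\{\mu_i\} \in \M_X(\bal)$. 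Each $\mu_i$ is concentrated on $\overline{F(A_i)}$, hence $\supp \mu_i \subset \overline{F(A_i)}$; since $F$ is $1$-Lipschitz, $\diam \supp \mu_i \le \diam F(A_i) \le \diam A_i$. Taking the infimum over $(Y,F,\{A_i\})$ yields the inequality.

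For the reverse direction, I will start from $\{\mu_i\} \in \M_X(\bal)$, set $\nu := \mu_X - \sum_{i=1}^n \al_i \mu_i \ge 0$, and realize an element of $\underline{D_X}(\bal)$ as a disjoint union of copies of $X$. Put
\begin{equation*}
Y := X \times (\{0\} \cup \N_n), \qquad d_Y((x,i),(x',j)) := d_X(x,x') + M\cdot \1_{\{i \ne j\}}
\end{equation*}
for any fixed $M>0$; this is a complete separable metric (with each slice isometric to $X$). Equip $Y$ with the Borel probability measure
\begin{equation*}
\mu_Y := \nu \otimes \de_0 + \sum_{i=1}^n \al_i\,(\mu_i \otimes \de_i),
\end{equation*}
which has total mass $1$ by the definition of $\nu$. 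The slice projection $F(x,i) := x$ is $1$-Lipschitz and pushes $\mu_Y$ forward to $\mu_X$, so $Y \succ X$. Taking $A_i := \supp \mu_i \times \{i\}$ produces disjoint Borel sets with $\mu_Y(A_i) = \al_i \mu_i(\supp \mu_i) = \al_i$, and because $d_Y$ restricted to a single slice coincides with $d_X$, I get $\diam A_i = \diam \supp \mu_i$. Remark \ref{diamrem} then gives $\udiam(X;\bal) \le \sup_{i} \diam \supp \mu_i$, and infimizing finishes the argument.

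The step I expect to require the most care is the construction of $(Y,d_Y,\mu_Y)$ together with its projection $F$, specifically verifying the triangle inequality for $d_Y$ and completeness/separability in the case $n=\infty$; both are routine but warrant a sentence. Everything else reduces to the inequality $\sum_i \al_i \mu_i \le \mu_X$, which in the first direction follows from disjointness of the $A_i$ combined with $F_*\mu_Y=\mu_X$, and in the second direction is built into the definition of $\M_X(\bal)$.
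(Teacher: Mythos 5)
Your proposal is correct and follows essentially the same argument as the paper: the lower bound via $\mu_i:=F_*(\mu_Y)_{A_i}$ with $\sum_i\al_i\mu_i\le\mu_X$ from disjointness, and the upper bound via the sliced space $Y=X\times(\N_n\cup\{0\})$ with $A_i=\supp\mu_i\times\{i\}$ and the slice projection as dominating map. The only (immaterial) difference is your choice of the discrete metric $M\cdot\1_{\{i\ne j\}}$ on the index factor, where the paper uses $|i-j|$; both make each slice isometric to $X$, which is all that is needed.
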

\begin{proof}
We put the right-hand side of the equality as $R$. First, we prove $\udiam(X;\bal)\le R$. We take any $\{\mu_i\}_{i=1}^n\in \M_X(\bal)$ and set $\nu_X:=
\mu_X-\sum_{i=1}^n\al_i\mu_i$. If we define an mm-space $Y$ as 
\begin{align*}
Y&:=X\times (\N_n\cup\{0\}),\\
d_Y&:Y\times Y\to[0,\infty),\\
&d_Y((x,i),(y,j)):=d_X(x,y)+|i-j|,\\
\mu_Y&:=\nu_X\otimes \de_0+\sum_{i=1}^n\al_i\mu_i\otimes \de_0,
\end{align*}
a $1$-Lipschitz map $F:Y\to X$ as $F(x,i):=x$, and a family $\{A_i\}_{i=1}^n$ of Borel subsets of $Y$ as $A_i:=\supp \mu_i\times \{i\}$, then we have $(Y,F,\{A_i\}_{i=1}^n)\in \underline{D_X}(\bal)$ and 
\begin{align*}
\udiam(X;\bal)\le \diam(Y;\bal)\le \sup_{i\in \N_n}\diam A_i=\sup_{i\in \N_n}\diam \supp\mu_i.
\end{align*} 
By the arbitrariness of $\{\mu_i\}_{i=1}^n\in \M_X(\bal)$, we obtain $\udiam(X;\bal)\le R$.

Next, we prove $\udiam(X;\bal)\ge R$. For any $(Y,F, \{A_i\}_{i=1}^n)\in \underline{D_X}(\bal)$ and $i\in\N_n$, we define a probability measure $\mu_i$ as $\mu_i:=F_*(\mu_Y)_{A_i}$ where
\begin{equation*}
(\mu_Y)_{A_i}:=\frac{\mu_Y(\cdot\cap A_i)}{\mu_Y(A_i)}\in \P(Y).
\end{equation*}
Then, we have $\{\mu_i\}_{i=1}^n\in \M_X(\bal)$ since for any $B\in \B_X$, 
\begin{align*}
\sum_{i=1}^n\al_i\mu_i(B)&= \sum_{i=1}^n\frac{\al_i}{\mu_Y(A_i)}\mu_Y(F^{-1}(B)\cap A_i)\\
&\le \mu_Y\left(F^{-1}(B)\cap \bigcup_{i=1}^nA_i\right)\\
&\le F_*\mu_Y(B)=\mu_X(B).
\end{align*} 
In addition, $\supp \mu_i\subset \overline{F(A_i)}$ from
\begin{equation*}
\mu_i(\overline{F(A_i)})\ge (\mu_Y)_{A_i}(A_i)=1.
\end{equation*}
Thus, we have
\begin{equation*}
R\le \sup_{i\in\N_n}\diam \supp \mu_i\le \sup_{i\in\N_n}\diam \overline{F(A_i)}\le \sup_{i\in\N_n}\diam A_i.
\end{equation*}
The arbitrariness of $(Y,F, \{A_i\}_{i=1}^n)\in \underline{D_X}(\bal)$ implies $\udiam(X;\bal)\ge R$.
\end{proof}
\begin{lem}
\label{convlem}
Let $X$ be an mm-space and let $\bal\in \A$ with $\|\bal\|_1\le 1$. Suppose that there exist a sequence $\{\mu_m\}_{m\in \N}$ in $\P(X)$, a sequence $\{\ep_m\}_{m\in \N}$ converging to $0$, and a sequence $\{\{B_{i,m}\}_{i=1}^n\}_{m\in \N}$ of families of closed subsets of $X$ such that 
\begin{itemize}
\item $\mu_m$ weakly converges to some $\mu\in \P(X)$,
\item $\sup_{i\in \N_n}\diam B_{i,m}\to 0$\qquad$(m\to\infty)$,
\item $\mu_m(\bigcup_{i\in I}B_{i,m})\ge \sum_{i\in I}(\al_i-\ep_m)$ for any $m\in \N$ and $I\subset \N_n$.
\end{itemize}
Then, there exists $\{x_i\}_{i=1}^n\subset X$ such that 
\begin{equation*}
\mu(\{x_i\mid i\in I\})\ge \sum_{i\in I}\al_i 
\end{equation*}
for any $I\subset \N_n$.
\end{lem}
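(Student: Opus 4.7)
The plan is to use tightness from the weak convergence $\mu_m \to \mu$ to extract, for each $i \in \N_n$, a convergent sequence of representatives $b_{i, m} \in B_{i, m}$, and then use the shrinking diameters together with the portmanteau theorem to show that each limit point is a genuine atom of $\mu$ carrying enough mass.

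First, for $m$ large (depending on $i$) we have $\al_i - \ep_m > 0$, so the hypothesis forces $B_{i,m}$ to be nonempty and hence admits a representative $b_{i,m}$. Since $\mu_m \to \mu$ weakly, $\{\mu_m\}$ is tight by Prokhorov's theorem, so for every $\eta > 0$ there is a compact set $K_\eta \subset X$ with $\mu_m(K_\eta) \ge 1 - \eta$ for all $m$. Applying this with $\eta = \al_i/2$ gives $B_{i,m} \cap K_{\al_i/2} \ne \emptyset$ for all sufficiently large $m$, since otherwise $\mu_m(B_{i,m}) \le \mu_m(X \setminus K_{\al_i/2}) \le \al_i/2 < \al_i - \ep_m$, a contradiction. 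Choosing $b_{i,m}$ inside this compact set and then running a diagonal extraction over $i \in \N_n$ yields a subsequence $\{m_k\}$ along which $b_{i, m_k}$ converges to some $x_i \in X$ for every $i$.

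Next, for a point $x$ in the cluster set $\{x_i\}_{i \in \N_n}$, set $I_x := \{i \in \N_n \mid x_i = x\}$. For any finite $I' \subset I_x$ and any $\delta > 0$, the convergences $b_{i, m_k} \to x$ and $\diam B_{i, m_k} \to 0$ give $B_{i, m_k} \subset \overline{B_X(x, \delta)}$ for every $i \in I'$ once $k$ is large enough (this is where finiteness of $I'$ is used). The hypothesis then yields
\begin{equation*}
\mu_{m_k}\bigl(\overline{B_X(x, \delta)}\bigr) \ge \mu_{m_k}\Bigl(\bigcup_{i \in I'} B_{i, m_k}\Bigr) \ge \sum_{i \in I'}(\al_i - \ep_{m_k}),
\end{equation*}
and the portmanteau theorem for closed sets gives $\mu(\overline{B_X(x, \delta)}) \ge \sum_{i \in I'}\al_i$. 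Letting $\delta \to 0$ (using $\bigcap_{\delta > 0}\overline{B_X(x,\delta)} = \{x\}$ in a metric space) and then taking the supremum over finite $I' \subset I_x$ gives $\mu(\{x\}) \ge \sum_{i \in I_x}\al_i$.

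Finally, for arbitrary $I \subset \N_n$ the set $J := \{x_i \mid i \in I\}$ is at most countable and the family $\{I \cap I_x\}_{x \in J}$ partitions $I$, so summing the atom estimates gives
\begin{equation*}
\mu(\{x_i \mid i \in I\}) = \mu(J) = \sum_{x \in J}\mu(\{x\}) \ge \sum_{x \in J}\sum_{i \in I \cap I_x}\al_i = \sum_{i \in I}\al_i,
\end{equation*}
as required. The main obstacle is organizing the diagonal extraction when $n = \infty$ (where the $\al_i$'s may not be uniformly bounded away from zero, so no single compact can capture all representatives) together with the bookkeeping when several indices share the same limit $x_i$; once these are handled, the remaining estimates are routine applications of tightness and the portmanteau theorem.
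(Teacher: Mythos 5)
Your proof is correct, but it takes a genuinely different route from the paper's. The paper runs the argument at the level of sets: by Proposition \ref{wHsub} and a diagonal argument it extracts a subsequence along which each $B_{i,m_k}$ converges weakly (as a closed subset) to some closed $B_i$, and then the semicontinuity statements of Proposition \ref{wHconti} give $\diam B_i=0$ and $\mu(B_i)\ge \al_i$, so $B_i=\{x_i\}$; the estimate for general $I$ comes from applying Proposition \ref{wHconti} to the finite unions $\bigcup_{i\in I_\ell}B_{i,m_k}$ and letting $\ell\nearrow n$. You bypass the closed-set convergence machinery entirely: tightness of the weakly convergent sequence $\{\mu_m\}$ on the Polish space $X$ lets you pick representatives $b_{i,m}\in B_{i,m}$ inside fixed compacta, a diagonal extraction yields limits $x_i$, and then the classical portmanteau inequality for the closed sets $\overline{B_X(x,\delta)}$, combined with $\diam B_{i,m_k}\to 0$ and continuity from above as $\delta\to 0$, gives $\mu(\{x\})\ge\sum_{i\in I_x}\al_i$ for $I_x:=\{i\mid x_i=x\}$; grouping indices with a common limit and summing over the countable set of limits handles arbitrary $I$, including the case where several $x_i$ coincide, and your finite-truncation step plays the same role as the paper's $\ell\nearrow n$. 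What each approach buys: yours is more elementary and self-contained, using only standard facts about weak convergence of measures, whereas the paper's is shorter given the cited propositions and records the slightly stronger conclusion that the sets $B_{i,m_k}$ themselves converge weakly to the singletons $\{x_i\}$, a fact the paper reuses later (in the proofs of Lemma \ref{multidiamzero} and Proposition \ref{prop}); note, however, that your construction exhibits each $x_i$ as a limit of points $b_{i,m_k}\in B_{i,m_k}$, which would serve those later applications equally well.
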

\begin{proof}
For any $i\in \N_n$, it follows from Proposition \ref{wHsub} that $\{B_{i,m}\}_{m\in \N}$ has a weakly convergent subsequence. By diagonal argument, there exist a subsequence $\{m_k\}_{k\in \N}$ and a family $\{B_i\}_{i=1}^n$ of closed subsets of $X$ such that $B_{i,m_k}$ converges weakly to $B_i$ as $k\to \infty$ for any $i\in \N_n$. Since Proposition \ref{wHconti} implies 
\begin{align*}
\diam B_i&\le \liminf_{k\to\infty}\diam B_{i,m_k}=0,\\
\mu(B_i)&\ge \limsup_{k\to\infty} \mu_{m_k}(B_{i,m_k})\ge \al_i>0,
\end{align*}
for any $i\in \N_n$, there exists $x_i\in X$ such that $B_i=\{x_i\}$. We take any $I\subset \N_n$ and $\ell\in \N$ with $\ell\le n$. Then, $\bigcup_{i\in I_{\ell}}B_{i,m_k}$ converges weakly to $\bigcup_{i\in I_{\ell}}B_i=\{x_i\mid i\in I_{\ell}\}$ where $I_{\ell}:=I\cap \N_{\ell}$. Thus, Proposition \ref{wHconti} implies 
\begin{align*}
\mu\left(\{x_i\mid i\in I\}\right)&\ge\mu\left(\{x_i\mid i\in I_{\ell}\}\right)\ge \limsup_{k\to\infty}\mu_{m_k}\left(\bigcup_{i\in I_{\ell}}B_{i,m_k}\right)\\
&\ge\lim_{k\to\infty}\sum_{i\in I_{\ell}}(\al_i-\ep_{m_k})= \sum_{i\in I_{\ell}}\al_i,
\end{align*}
and the right-hand side converges to $\sum_{i\in I}\al_i$ as $\ell\nearrow n$. This completes the proof.
\end{proof}
\begin{lem}
\label{multidiamzero}
Let $X$ be an mm-space and let $\bal\in \A$. Then, the  following are equivalent.
\begin{enumerate}
\item $\diam(X;\bal)=0$.
\item There are distinct points $\{x_i\}_{i=1}^n\subset X$ such that $\mu_X(\{x_i\})\ge \al_i$ for any $i\in \N_n$.
\end{enumerate}
\end{lem}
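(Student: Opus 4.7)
The direction $(2) \Rightarrow (1)$ is immediate: given distinct points $\{x_i\}_{i=1}^n$ with $\mu_X(\{x_i\}) \ge \al_i$, the singletons $A_i := \{x_i\}$ form a pairwise disjoint Borel family in $D_X(\bal)$ of zero diameter, so $\diam(X;\bal) = 0$.

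For $(1) \Rightarrow (2)$, use the infimum definition of $\diam(X;\bal)$ to pick, for every $m \in \N$, a disjoint Borel family $\{A_{i,m}\}_{i=1}^n \in D_X(\bal)$ with $\sup_i \diam A_{i,m} < 1/m$; disjointness immediately gives $\|\bal\|_1 \le 1$. The plan is to pass to a limit along this sequence of families. The cleanest route is to apply Lemma \ref{convlem} with $\mu_m := \mu_X$, $\ep_m := 0$ and $B_{i,m} := \overline{A_{i,m}}$: the diameters tend uniformly to zero, and disjointness supplies $\mu_X\bigl(\bigcup_{i \in I} B_{i,m}\bigr) \ge \sum_{i \in I}\al_i$ for every $I \subset \N_n$. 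The lemma then produces $\{x_i\}_{i=1}^n \subset X$ satisfying $\mu_X(\{x_i \mid i \in I\}) \ge \sum_{i \in I}\al_i$; specializing to $I = \{i\}$ gives $\mu_X(\{x_i\}) \ge \al_i$. Moreover, the subsequence $\{m_k\}$ from the proof of the lemma makes $B_{i,m_k}$ converge weakly to $\{x_i\}$ for every $i$, and combined with $\diam B_{i,m_k} \to 0$ this forces $A_{i,m_k} \subset B_X(x_i, \ep_{i,k})$ for some $\ep_{i,k} \downarrow 0$.

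The genuine obstacle is distinctness of the $x_i$'s, which the union-type bound produced by the lemma does not by itself guarantee. I would close it by contradiction using the disjointness of $\{A_{i,m}\}$. Suppose $x_i = x_j =: x$ for some $i \ne j$, and set $\ep_k := \max(\ep_{i,k}, \ep_{j,k}) \downarrow 0$; then $A_{i,m_k}, A_{j,m_k} \subset B_X(x, \ep_k)$. Since $A_{i,m_k} \cap A_{j,m_k} = \emptyset$, the single point $x$ lies in at most one of these two sets, so after passing to a further subsequence we may assume $x \notin A_{j,m_k}$ for every $k$. Then $A_{j,m_k} \subset B_X(x, \ep_k) \setminus \{x\}$, and continuity of the probability measure $\mu_X$ on the decreasing sequence $B_X(x, \ep_k) \setminus \{x\}$---whose intersection is empty---forces $\mu_X(A_{j,m_k}) \to 0$, contradicting $\mu_X(A_{j,m_k}) \ge \al_j > 0$. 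Thus the $x_i$ are pairwise distinct, completing the proof. The moral of the hard step is that the disjointness built into $D_X(\bal)$, together with the vanishing of $\mu_X$-mass on punctured shrinking neighborhoods, is precisely what upgrades the union inequality coming from Lemma \ref{convlem} into genuine distinctness.
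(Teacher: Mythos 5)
Your proposal is correct and follows essentially the same route as the paper: both directions are handled identically, with $(1)\Rightarrow(2)$ obtained by applying Lemma \ref{convlem} to $\mu_m:=\mu_X$, $B_{i,m}:=\overline{A_{i,m}}$, and distinctness extracted from the weak convergence $\overline{A_{i,m_k}}\to\{x_i\}$ together with the vanishing of $\mu_X$ on punctured shrinking balls. The only difference is cosmetic: the paper fixes a single $\ep$ with $\mu_X(B_X(x_i,\ep)\setminus\{x_i\})<\al_i$ and deduces directly that $x_i\in A_{i,m_k}$ and $x_j\in A_{j,m_k}$, so disjointness yields $x_i\ne x_j$, whereas you run the same estimate as a proof by contradiction assuming $x_i=x_j$.
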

\begin{proof}
$(2)\Rightarrow (1)$ is obvious. Hence, we prove only $(1)\Rightarrow (2)$. By (1), there exists a sequence $\{\{A_{i,m}\}_{i=1}^n\}_{m\in\N}$ in $D_X(\bal)$ such that 
\begin{equation*}
\lim_{m\to\infty}\sup_{i\in \N_n}\diam A_{i,m}=0.
\end{equation*}
Then, it follows from Lemma \ref{convlem} for $\mu_m:=\mu_X$ and $B_{i,m}:=\overline{A_{i,m}}$ that there exists $\{x_i\}_{i=1}^n\subset X$ such that $\mu_X(\{x_i\})\ge \al_i$ for any $i\in \N_n$. Finally, we prove that $\{x_i\}_{i=1}^n$ are distinct. By proof of Lemma \ref{convlem}, there exists a subsequence $\{m_k\}_{k\in \N}$ such that $\overline{A_{i,m_k}}$ converges weakly to $\{x_i\}$. In addition, for any $i,j\in \N_n$ with $i\ne j$, we can take $\ep>0$ satisfying $\mu_X(B_X(x_i,\ep)\setminus \{x_i\})<\al_i$ and $\mu_X(B_X(x_j,\ep)\setminus \{x_j\})<\al_j$ since 
\begin{equation*}
\lim_{\ep\searrow0}\mu_X(B_X(x_i,\ep)\setminus \{x_i\})=\lim_{\ep\searrow0}\mu_X(B_X(x_j,\ep)\setminus \{x_j\})=0.
\end{equation*}
Hence, there exists $N_{\ep}\in \N$ such that $k\ge N_{\ep}$ implies $A_{i,m_k}\subset B_X(x_i,\ep)$ and $A_{j,m_k}\subset B_X(x_j,\ep)$. Here, if $x_i\notin A_{i,m_k}$, then we obtain
\begin{equation*}
\al_i>\mu_X(B_X(x_i,\ep)\setminus \{x_i\})\ge \mu_X(A_{i,m_k})\ge \al_i.
\end{equation*}
It is a contradiction. Similarly, it contradicts if $x_j\notin A_{j,m_k}$. Thus, we get $x_i\in A_{i,m_k}$ and $x_j\in A_{j,m_k}$. Therefore, $A_{i,m_k}\cap A_{j,m_k}=\emptyset$ implies $x_i\ne x_j$.
\end{proof}
\section{Proof of Main Theorem}
\label{ProofMT}

The purpose in this section is to prove the main theorems. Since Main Theorem \ref{atomObs} can be proved as a corollary of Main Theorem \ref{atomultiobsdiam}, we prove only Main Theorem \ref{atomultiobsdiam}. First, we give some definitions and propositions to prove Main Theorem \ref{atomultiobsdiam}.
\begin{dfn}
Let $m$ be a positive integer. We say that $W\subset \R^m$ is an \emph{affine subspace of $\R^m$} if there exist a linear subspace $V$ of $\R^m$ and $b\in \R^m$ such that $W=V+b$, where $V+b:=\{v+b\mid v\in V\}$.
\end{dfn}
Since such a linear subspace $V$ is unique for an affine subspace $W$, we write the linear subspace $V$ as $\widetilde{W}$, and we define the dimension $\dim W$ of an affine subspace $W$ as
\begin{equation*}
\dim W:=\dim \widetilde{W}.
\end{equation*}
\begin{rem}
\label{affprop}
Only one of the following holds for two affine subspaces $W_1$ and $W_2$ with $\dim W_1\le \dim W_2$.
\begin{itemize}
\item $W_1\cap W_2=\emptyset$.
\item $W_1 \subset W_2$.
\item $\dim(W_1\cap W_2)\le \dim W_1-1$.
\end{itemize}
\end{rem}
Next lemma follows from the fact that any manifold is a Baire space.
\begin{lem}
\label{mfd}
Let $m\in \N$ and let $M$ be an $m$-dimensional manifold. If $\{M_i\}_{i\in \N}$ is a family of countable closed subsets of $M$ which are homeomorphic to manifolds whose dimensions are less than $m$, then we have
\begin{equation*}
M\ne \bigcup_{i\in \N}M_i.
\end{equation*}
\end{lem}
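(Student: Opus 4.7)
The hint already discloses the key ingredient: since $M$ is locally compact Hausdorff, the Baire category theorem applies to it, so $M$ is a Baire space. My plan is therefore to show that each $M_i$ is nowhere dense in $M$, from which it follows that $\bigcup_{i\in \N} M_i$ is meager and hence a proper subset of $M$.

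Since each $M_i$ is closed, being nowhere dense amounts to having empty interior in $M$. I will argue this by contradiction. Suppose some $M_i$ contains a nonempty open subset $U$ of $M$. Then $U$ is simultaneously an open subset of the $m$-manifold $M$, so it is itself a topological $m$-manifold; and, via the homeomorphism $M_i \cong N_i$ with $\dim N_i =: k_i < m$, it corresponds to an open subset of $N_i$, so it is also a topological $k_i$-manifold. Choosing charts around any point of $U$ under the two different manifold structures yields a homeomorphism between a nonempty open subset of $\R^m$ and an open subset of $\R^{k_i}$ with $k_i < m$. This contradicts Brouwer's invariance of domain (equivalently, invariance of dimension).

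Having shown that each $M_i$ is a closed subset of $M$ with empty interior, the countable union $\bigcup_{i\in \N} M_i$ is meager in the Baire space $M$, and therefore cannot equal $M$. The only nontrivial step is the dimensional obstruction used to exclude open interior, but invariance of domain is a classical topological fact; once that is in hand, the lemma reduces to a one-line application of Baire category.
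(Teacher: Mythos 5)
Your proof is correct and follows exactly the route the paper intends: the paper gives no written proof beyond the remark that any manifold is a Baire space, and your argument supplies the missing details (each $M_i$ is closed with empty interior by invariance of domain/dimension, hence the union is meager and cannot exhaust $M$). Nothing to add.
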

The following lemma and corollary are most important to prove Main Theorem \ref{atomultiobsdiam}.
\begin{lem}
\label{afflem}
Let $\mu$ be a Borel probability measure on $\R^m$ and let $A$ be a countable subset of $\R^m$. Then, there exists an $(m-1)$-dimensional linear subspace $V$ of $\R^m$ such that we have 
\begin{equation}
\label{cond}
\text{$|V_b\cap A|\le 1$ \quad and \quad $\mu(V_b)=\mu(V_b\cap A_{\mu})$}
\end{equation} 
for any $b\in \R^m$, where $V_b:=V+b$ and 
\begin{equation*}
A_{\mu}:=\{x\in \R^m\mid \mu(\{x\})>0\}.
\end{equation*}
\end{lem}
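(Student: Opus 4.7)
The plan is to parametrize the candidate $(m-1)$-dimensional linear subspaces of $\R^m$ by their unit normals $u\in S^{m-1}$, writing $V^u:=u^\perp$ so that $V^u_b=\{x\in\R^m:\langle u,x\rangle=\langle u,b\rangle\}$, and then to pick one $u$ that avoids two bad sets of directions simultaneously. The case $m=1$ is trivial, since then $V=\{0\}$ forces $V_b=\{b\}$ and both conditions hold automatically. For $m\ge 2$, I would first replace $\mu$ by its atomless part $\nu(E):=\mu(E\setminus A_\mu)$, so that condition~(2) is equivalent to "$u_*\nu$ has no atoms" (equivalently $\nu(V^u_b)=0$ for every $b$, since $\nu(V^u_b)=(u_*\nu)(\{\langle u,b\rangle\})$). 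The two bad sets are then $B_1:=\{u\in S^{m-1}:\langle u,a-a'\rangle=0\text{ for some distinct }a,a'\in A\}$, violating~(1), and $B_2:=\{u\in S^{m-1}:u_*\nu\text{ has an atom}\}$, violating~(2). Since $A$ is countable, $B_1$ is a countable union of great $(m-2)$-subspheres of $S^{m-1}$ and is harmless.

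The main obstacle is controlling $B_2$, because the collection $\{H:\nu(H)>0\}$ of "bad" hyperplanes can genuinely be uncountable: for example, if $\nu$ is Lebesgue measure on a line $L\subset\R^3$, then every hyperplane containing $L$ has full $\nu$-mass. I would handle $B_2$ by a Fubini argument on $S^{m-1}\times\R^m\times\R^m$. Integrating the indicator of $\{\langle u,x-y\rangle=0\}$ against $d\sigma(u)\,d(\nu\otimes\nu)(x,y)$, where $\sigma$ is the uniform surface measure, one checks: for $x\ne y$ the $u$-slice $\{u:u\perp(x-y)\}$ is a great $(m-2)$-subsphere of $\sigma$-measure zero, while the diagonal $\{x=y\}$ has $(\nu\otimes\nu)$-measure zero since $\nu$ is atomless. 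Hence the triple integral vanishes; and for each fixed $u$ the inner $(x,y)$-integral equals $\sum_c(u_*\nu(\{c\}))^2$, which must therefore vanish for $\sigma$-almost every $u$. This gives $\sigma(B_2)=0$.

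Combining $\sigma(B_1)=\sigma(B_2)=0$ with $\sigma(S^{m-1})>0$ yields some $u\in S^{m-1}\setminus(B_1\cup B_2)$, and I set $V:=u^\perp$. Conditions~(1) and~(2) then fall out immediately: two distinct points of $V_b\cap A$ would differ by a nonzero vector in $V=u^\perp$, contradicting $u\notin B_1$; and $\nu(V_b)=(u_*\nu)(\{\langle u,b\rangle\})=0$ since $u\notin B_2$, so $\mu(V_b)=\mu(V_b\cap A_\mu)+\nu(V_b)=\mu(V_b\cap A_\mu)$. Note that although Lemma~\ref{mfd} suggests a Baire-category route, $B_2$ is not visibly a countable union of proper submanifolds, which is why the measure-theoretic Fubini step is the cleanest way around the obstruction.
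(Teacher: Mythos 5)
Your proof is correct, and it reaches the conclusion by a genuinely different route from the paper. The paper works inside the Grassmannian $\Gr_{m-1}(\R^m)$: it inductively stratifies the ``concentrated'' part of $\mu$ by dimension, producing countable families $G_k$ of $k$-dimensional affine subspaces (the countability coming from a pigeonhole bound on how many disjoint pieces of mass $>1/\ell$ a probability measure admits), shows that any translate $V_b$ with $\mu(V_b)>\mu(V_b\cap A_\mu)$ would have to contain some $W\in G=\bigcup_k G_k\cup G_1'$, and then avoids the countably many closed proper submanifolds $\G_W\subset\Gr_{m-1}(\R^m)$ by the Baire-category Lemma \ref{mfd}. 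You instead pass to the non-atomic part $\nu=\mu(\,\cdot\setminus A_\mu)$, observe that condition (\ref{cond}) for the measure amounts to atomlessness of the projection $u_*\nu$ along the normal direction $u$, and dispose of the bad directions by the Tonelli computation of $\int_{S^{m-1}}\sum_c\bigl(u_*\nu(\{c\})\bigr)^2\,d\sigma(u)=0$, which correctly identifies the set $B_2$ (it is $\sigma$-measurable, being the positivity set of a Tonelli-measurable function) as $\sigma$-null; the countable set $A$ only contributes the null set $B_1$ of directions orthogonal to some difference $a-a'$. Both arguments confront the same genuine difficulty --- uncountably many hyperplanes may carry positive $\nu$-mass, as in your example of Lebesgue measure on a line in $\R^3$ --- but resolve it differently: the paper's induction shows the bad directions lie in a countable union of proper closed submanifolds (so the good set is residual), while your Fubini argument shows directly that the good set has full $\sigma$-measure, bypassing the stratification, the countability bookkeeping for $G_k$ and the Borel-ness of $S_k$, and Lemma \ref{mfd} altogether. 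Your version is shorter and self-contained; the paper's yields slightly more structural information about where the exceptional directions live and reuses machinery (the Baire argument on manifolds) already set up in the text. Your handling of the edge cases ($m=1$; $\nu=0$; the diagonal having $\nu\otimes\nu$-measure zero because $\nu$ is atomless) is sound.
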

\begin{proof}
First, we define $A_0$ as
\begin{equation*}
A_0:=A\cup A_{\mu}.
\end{equation*}
For $k=1,\ldots, m$, we define $S_{k-1}$, $G_k$, and $A_k$ inductively as
\begin{align*}
S_{k-1}&:=\bigcup_{i=0}^{k-1}A_i,\\
G_k&:=\{W\in \Aff_k(\R^m) \mid \mu(W\setminus S_{k-1})>0\},\\
A_k&:=\bigcup_{W\in G_k}(W\setminus S_{k-1}),
\end{align*}
where $\Aff_k(\R^m)$ denotes the set of $k$-dimensional affine subspaces of $\R^m$. Then, we prove the following claim. 
\begin{claim}
\label{induc}
A proposition $(P_k)$ holds for any $k=0,1,\ldots,m-1$, and a proposition $(Q_k)$ holds for any $k=1,\ldots,m-1$, where
\begin{enumerate}
\item[$(P_k):$] For any $\ell=0,\ldots,k$ and $W\in \Aff_{\ell}(\R^m)$, we have
\begin{equation}
\label{nzero}
\mu(W\setminus S_k)=0.
\end{equation}
\item[$(Q_k):$] $G_k$ is countable, and $S_k$ is a Borel subset of $X$. 
\end{enumerate} 
\end{claim}
\begin{proof}
To prove Claim \ref{induc}, it is sufficient that we prove $(P_0)$, $(P_{k-1})\Rightarrow (Q_k)$, and $(Q_k)\Rightarrow (P_k)$ for any $k=1,\ldots,m-1$. We can easily prove $(P_0)$ since $S_0=A_0\supset A_{\mu}$.

First, we prove $(Q_k)\Rightarrow (P_k)$ for any $k=1,\ldots,m-1$. By $(Q_k)$, (\ref{nzero}) for $k$ is well-defined. In addition, we may assume $\ell=k$ since for any $W\in \Aff_{\ell}(\R^m)$, there exists $W'\in \Aff_k(\R^m)$ such that $W\subset W'$. We take any $W\in \Aff_k(\R^m)$. If $\mu(W\setminus S_{k-1})=0$, then $S_{k-1}\subset S_k$ implies (\ref{nzero}). If otherwise, then $W\setminus S_{k-1}\subset A_k$ by $W\in G_k$. Thus, we have 
\begin{align*}
W\subset S_{k-1}\cup (W\setminus S_{k-1})\subset S_{k-1}\cup A_k=S_k.
\end{align*}
Therefore, we get (\ref{nzero}), in particular, $(P_k)$ holds.

Next, we prove $(P_{k-1})\Rightarrow (Q_k)$ for any $k=1,\ldots,m-1$. In order to prove that $G_k$ is countable, it is sufficient to prove that 
\begin{equation*}
G_{k,\ell}:=\{W\in G_k\mid \mu(W\setminus S_{k-1})>1/\ell\}
\end{equation*}
consists of at most $\ell-1$ points for any $\ell\in \N$ since
\begin{equation*}
G_k=\bigcup_{\ell\in \N}G_{k,\ell}.
\end{equation*}
Suppose that there are distinct $W_1,\ldots,W_{\ell}\in G_{k,\ell}$. If $i\ne j$, then $W_i\cap W_j$ is empty, or $\dim (W_i\cap W_j)$ is less than $k$ by Remark \ref{affprop}. Hence, $(P_{k-1})$ implies 
\begin{equation*}
\mu((W_i\setminus S_{k-1})\cap(W_j\setminus S_{k-1}))=\mu((W_i\cap W_j)\setminus S_{k-1})=0.
\end{equation*}
Thus, we have
\begin{equation*}
\mu\left(\bigcup_{i=1}^{\ell}(W_i\setminus S_{k-1})\right)=\sum_{i=1}^{\ell}\mu\left(W_i\setminus S_{k-1}\right)>\sum_{i=1}^{\ell}\frac{1}{\ell}=1.
\end{equation*}
This contradicts that $\mu$ is a probability measure. Therefore, $G_k$ is countable. In addition, since $W\setminus S_{k-1}$ is Borel for any $W\in G_k$, $A_k$ and $S_k$ are Borel. In particular, $(Q_k)$ holds. 
\end{proof}
Here, since $A_0$ is countable, 
\begin{equation*}
G_1':=\{W\in \Aff_1(\R^m)\mid |W\cap A_0|\ge 2\}
\end{equation*}
and
\begin{equation*}
G:=\bigcup_{k=1}^{m-1}G_k\cup G_1'
\end{equation*}
are countable. For any $W\in G$, 
\begin{equation*}
\G_W:=\{V\in \Gr_{m-1}(\R^m)\mid \widetilde{W}\subset V\}
\end{equation*}
is a closed subset of the Grassmann manifold $\Gr_{m-1}(\R^m)$ and homeomorphic to the real projective space $\R P^{m-1-k}$, where $k$ satisfies $W\in G_k$ or $W\in G_k'$. Indeed, $\Gr_{m-1}(\R^m)$ is homeomorphic to $\R P^{m-1}$ by a map $\ph:\Gr_{m-1}(\R^m)\to \R P^{m-1}$ defined as 
\begin{equation*}
\ph(V):=[p]\qquad (V=p^{\perp}),
\end{equation*}
and we have 
\begin{equation*}
\ph(\G_W)=\left\{[p]\in \R P^{m-1}\ \left| \ p\in \widetilde{W}^{\perp}\right.\right\}.
\end{equation*} 
Since this set is equal to the image of an embedding $\tilde{\psi}:\R P^{m-1-k}\to \R P^{m-1}$ which is induced by an injective linear map $\psi:\R^{m-k}\xrightarrow{\cong} \widetilde{W}^{\perp}\hookrightarrow \R^m$, $\G_W$ is a compact (particularly closed) subset of $\Gr_{m-1}(\R^m)$ that is homeomorphic to $\R P^{m-1-k}$. Therefore, by 
\begin{equation*}
m-1-k<m-1=\dim \Gr_{m-1}(\R^m)
\end{equation*}
and countability of $G$, it follows from Lemma \ref{mfd} that there exists an element $V$ in
\begin{equation*}
\Gr_{m-1}(\R^m)\setminus\bigcup_{W\in G}\G_W.
\end{equation*}
If we take any $b\in \R^m$, then $V_b$ satisfies (\ref{cond}). 

Indeed, suppose $|V_b\cap A_0|\ge 2$, then there exists $W\in G_1'$ such that $W\subset V_b$. By the definition of $\widetilde{W}$, $\widetilde{W}\subset \widetilde{V_b}=V$ implies $V\in \G_W$. It is a contradiction. Thus, we have $|V_b\cap A_0|\le 1$. In particular, we obtain $|V_b\cap A|\le |V_b\cap A_0|\le 1$. Suppose that $\mu(V_b)>\mu(V_b\cap A_{\mu})$. Then, $\mu(V_b\setminus A_0)>0$ since $\mu(A\setminus A_{\mu})=0$. Hence, there exists $k_0\in \{1,\ldots,m-1\}$ such that
\begin{equation*}
\text{$\mu(V_b\setminus S_{k_0})=0$ \quad and \quad $\mu(V_b\setminus S_{k_0-1})>0$}
\end{equation*}
by (\ref{nzero}) and $S_0\subset S_1\subset\cdots\subset S_{m-1}$. Then, we have 
\begin{equation*}
\mu((V_b\setminus S_{k_0-1})\setminus A_{k_0})=0
\end{equation*}
since $S_{k_0-1}\cup A_{k_0}=S_{k_0}$.
On the other hand, for any $W\in G_{k_0}$, we have $V\notin \G_W$, i.e., $W\not\subset V_b$. By Remark \ref{affprop}, $V_b\cap W$ is empty, or $\dim (V_b\cap W)$ is less than $k_0$. Thus, we have $\mu((V_b\cap W)\setminus S_{k_0-1})=0$ by (\ref{nzero}). Therefore, we obtain
\begin{align*}
&\quad\ \mu((V_b\setminus S_{k_0-1})\cap A_{k_0})\\
&=\mu\left(\bigcup_{W\in G_{k_0}}((V_b\setminus S_{k_0-1})\cap(W\setminus S_{k_0-1}))\right)\\
&=\mu\left(\bigcup_{W\in G_{k_0}}((V_b\cap W)\setminus S_{k_0-1})\right)\\
&\le\sum_{W\in G_{k_0}}\mu((V_b\cap W)\setminus S_{k_0-1})\\
&=0
\end{align*}
and 
\begin{align*}
\mu(V_b\setminus S_{k_0-1})\le\mu((V_b\setminus S_{k_0-1})\cap A_{k_0})+\mu((V_b\setminus S_{k_0-1})\setminus A_{k_0})=0.
\end{align*}
However, this contradicts $\mu(V_b\setminus S_{k_0-1})>0$. Thus, we obtain $\mu(V_b)=\mu(V_b\cap A_{\mu})$. Therefore, $V_b$ satisfies (\ref{cond}).
\end{proof}
\begin{comment}
\begin{rem}
In this proof, we have described it roughly, but strictly speaking, we must check that $S_n$ is Borel before proving (\ref{nzero}) for each $n$. To check that $S_n$ is Borel, we need to use the countablity of $G_n$, and we use (\ref{nzero}) for $n-1$ to prove it. Hence, the precise proof of this lemma must be made inductively in the following order,
\begin{align*}
\text{$S_0$ is Borel\quad}&\text{$\to$\quad(\ref{nzero}) holds for $n=0$\quad$\to$\quad$G_1$ is countable}\\
\text{$\to$\quad$S_1$ is Borel\quad}&\text{$\to$\quad (\ref{nzero}) holds for $n=1$\quad$\to$\quad$G_2$ is countable}\\
&\qquad \qquad \qquad\vdots\\
\text{$\to$\quad$S_{m-2}$ is Borel\quad}&\text{$\to$\quad (\ref{nzero}) holds for $n=m-2$\ $\to$\ $G_{m-1}$ is countable}\\
\text{$\to$\quad$S_{m-1}$ is Borel\quad}&\text{$\to$\quad (\ref{nzero}) holds for $n=m-1$}.
\end{align*}
\end{rem}
\end{comment}
\begin{rem}
In the following, we put the countable set $A$ in Lemma \ref{afflem} as the set of $\mu$-atomic points, namely, $A:=A_{\mu}$ unless otherwise noted.
\end{rem}
\begin{cor}
\label{atom1Lip}
Let $m$ be a natural number and let $\mu$ be a Borel probability measure on $\R^m$. Then, there exists a $1$-Lipschitz function $f:(\R^m,\|\cdot\|_{\infty})\to \R$ such that 
\begin{itemize}
\item[(AP)] for any $b\in \R$ with $f_*\mu(\{b\})>0$, there exists $x\in f^{-1}(\{b\})$ satisfying $\mu(\{x\})=f_*\mu(\{b\})$.
\end{itemize}
\end{cor}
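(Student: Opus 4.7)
The plan is to produce $f$ as a normalized linear functional whose level sets are exactly the affine hyperplanes parallel to the subspace $V$ furnished by Lemma \ref{afflem} (applied with $A = A_\mu$). Since Lemma \ref{afflem} already guarantees that every such affine hyperplane contains at most one $\mu$-atom and carries all of its $\mu$-mass on the atomic part, property (AP) essentially reduces to reading off the lemma on each level set.

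More concretely, let $V\subset\R^m$ be the $(m-1)$-dimensional linear subspace given by Lemma \ref{afflem}, and pick any nonzero Euclidean normal $p\in V^{\perp}$. I will set
\begin{equation*}
f(x):=\frac{\langle p,x\rangle}{\|p\|_1},\qquad x\in\R^m.
\end{equation*}
The $1$-Lipschitz property with respect to $\|\cdot\|_\infty$ is immediate from Hölder's inequality, since $|f(x)-f(y)|=\|p\|_1^{-1}|\langle p,x-y\rangle|\le\|x-y\|_\infty$. The level sets of $f$ are precisely $\{f=c\}=\{x:\langle p,x\rangle=c\|p\|_1\}$, and each such hyperplane has the form $V_b=V+b$ for some $b\in\R^m$ (choose any $b$ with $\langle p,b\rangle=c\|p\|_1$); conversely every $V_b$ arises this way.

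It then remains to verify (AP). Given $c\in\R$ with $f_*\mu(\{c\})>0$, write $f^{-1}(\{c\})=V_b$. Property (\ref{cond}) of Lemma \ref{afflem} gives
\begin{equation*}
f_*\mu(\{c\})=\mu(V_b)=\mu(V_b\cap A_\mu)>0,
\end{equation*}
so $V_b\cap A_\mu$ is nonempty; but $|V_b\cap A_\mu|\le 1$, hence $V_b\cap A_\mu=\{x\}$ for a unique point $x$, and then $\mu(\{x\})=\mu(V_b)=f_*\mu(\{c\})$ with $x\in f^{-1}(\{c\})$, as required.

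I do not expect a real obstacle here; the substantive content has already been absorbed into Lemma \ref{afflem}. The only small issue is choosing the right normalization so that $f$ is $1$-Lipschitz for $\|\cdot\|_\infty$ rather than for the Euclidean norm, which is handled by dividing by $\|p\|_1$ instead of $\|p\|_2$. The case $m=1$ degenerates harmlessly (with $V=\{0\}$ and $f=\id$), so no separate argument is needed.
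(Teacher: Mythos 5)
Your proof is correct and follows essentially the same route as the paper: both take the hyperplane direction $V$ from Lemma \ref{afflem} (with $A=A_\mu$), define $f$ as the corresponding linear functional normalized so that $\|p\|_1=1$ makes it $1$-Lipschitz for $\|\cdot\|_\infty$, and read off (AP) from the condition (\ref{cond}) on the level sets $V_b$. You merely spell out the verification of (AP) that the paper leaves implicit.
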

\begin{proof}
It follows from Lemma \ref{afflem} that there exists an $(m-1)$-dimensional linear subspace $V$ of $\R^m$ such that (\ref{cond}) holds for any $b\in \R^m$. Then, there exists $p\in \R^m$ with $\|p\|_1=1$ and $V=p^{\perp}$. We define a function $f:(\R^m,\|\cdot\|_{\infty})\to \R$ as $f(x):=\langle p,x\rangle$. Then, $f$ is a $1$-Lipschitz function on $(\R^m,\|\cdot\|_{\infty})$, and
\begin{equation*}
f^{-1}(\{a\})=V+a\cdot p
\end{equation*}
for any $a\in \R$. Therefore, Corollary \ref{atom1Lip} holds.
\end{proof}
\begin{proof}[Proof of Main Theorem \ref{atomultiobsdiam}]
We may assume $\|\bal\|_1\le 1$ since all of (1), (2), (3), and (4) do not hold if $\|\bal\|_1> 1$, and we can easily prove $(3)\Rightarrow(4)$ by Proposition \ref{multiobsdiamonotone}.

\begin{proof}[Proof of $(1)\Rightarrow (3)$]
From $(1)$, we can get an mm-space $Y$ dominating $X$ and distinct points $\{y_i\}_{i=1}^n\subset Y$ with $\mu_Y(\{y_i\})\ge \al_i$ for any $i\in \N_n$. Then, Lemma \ref{multidiamzero} implies $\diam(Y;\bal)=0$. Thus, we obtain $\udiam(X;\bal)\le \diam(Y;\bal)=0$, that is, $(3)$.
\end{proof}
\begin{proof}[Proof of $(3)\Rightarrow (2)$]
By $(3)$ and Remark \ref{diamrem}, we can take a sequence $\{(Y_m,F_m,\{A_{i,m}\}_{i=1}^n)\}_{m\in \N}$ in $\underline{D_X}(\bal)$ such that 
\begin{equation*}
\lim_{m\to\infty}\sup_{i\in \N_n}\diam A_{i,m}=0.
\end{equation*}
Then, we have 
\begin{equation*}
\diam \overline{F_m(A_{i,m})}=\diam F_m(A_{i,m})\le \diam A_{i,m}
\end{equation*}
for any $i\in \N_n$ and $m\in\N$. Hence, we obtain 
\begin{align*}
&\quad \ \mu_X\left(\bigcup_{i\in I}\overline{F_m(A_{i,m})}\right)=\mu_{Y_m}\left(\bigcup_{i\in I}F_m^{-1}\left(\overline{F_m(A_{i,m})}\right)\right)\\
&\ge \mu_{Y_m}\left(\bigcup_{i\in I}A_{i,m}\right)=\sum_{i\in I}\mu_{Y_m}(A_{i,m})\ge \sum_{i\in I}\al_i
\end{align*}
for any $I\subset \N_n$. Thus, Lemma \ref{convlem} for $\mu_m:=\mu_X$, $B_{i,m}:=\overline{F_m(A_{i,m})}$, and $\ep_m:=0$ implies $(2)$.
\end{proof}
\begin{proof}[Proof of $(2)\Rightarrow (1)$]
By $(2)$, we get $\{x_i\}_{i=1}^n\subset X$ such that 
\begin{equation*}
\mu_X(\{x_i\mid i\in I\})\ge \sum_{i\in I}\al_i 
\end{equation*}
for any $I\subset \N_n$. Since for any $A\in \B_X$, we have
\begin{align*}
\sum_{i=1}^n\al_i\de_{x_i}(A)=\sum_{i\in I_A}\al_i\le \mu_X(\{x_i\mid i\in I_A\})\le \mu_X(A),
\end{align*}
where $I_A:=\{i\in \N_n\mid x_i\in A\}$, a Borel measure $\nu_X$ defined as
\begin{equation*}
\nu_X:=\mu_X-\sum_{i=1}^n\al_i\de_{x_i}
\end{equation*}
is non-negative. Here, we define $Y$, $d_Y$, and $\mu_Y$ as
\begin{align*}
Y&:=\bigcup_{i=0}^n(Y_i\times \{i\}),\qquad Y_i:=
\begin{cases}
X\qquad &(i=0)\\
\{x_i\}\qquad &(i\in \N_n)
\end{cases}
,\\
d_Y&:Y\times Y\to [0,\infty)\\
 &d_Y((x,i),(y,j)):=d_X(x,y)+|i-j|,\\
\mu_Y&:=\nu_X\otimes \de_0+\sum_{i=1}^n\al_i\de_{(x_i,i)}.
\end{align*}
Then, $(Y,d_Y, \mu_Y)$ is an mm-space and $\{(x_i,i)\}_{i=1}^n$ are distinct points in $Y$ such that $\mu_Y(\{(x_i,i)\})\ge \al_i$ for any $i\in \N_n$. Furthermore, a map $F:Y\to X$ defined as $F(x,i):=x$ is $1$-Lipschitz and satisfies 
\begin{equation*}
F_*\mu_Y=\nu_X+\sum_{i=1}^n\al_i\de_{x_i}=\mu_X.
\end{equation*}
Thus, we have $Y\succ X$ and $(1)$.
\end{proof}
\begin{proof}[Proof of $(4)\Rightarrow (2)$] 
Suppose $\uObsdiam(X;\bal)=0$. By Proposition \ref{findimappro}, for any $m\in \N$, there exists $\umu_m\in \P(\R^m)$ such that we have
\begin{align*}
X_1\prec X_2\prec\cdots\prec X_m\prec\cdots \prec X\text{\qquad and \qquad} X_m\Boxto X\qquad(m\to\infty),
\end{align*}
where $X_m:=(\R^m,\|\cdot\|_{\infty},\umu_m)$. Then, it follows from Corollary \ref{atom1Lip} that for any $m\in \N$, there exists a $1$-Lipschitz function $f_m\in \Lip_1(X_m)$ satisfying (AP). For any $m\in \N$, we have
\begin{equation*}
\udiam((\R,(f_m)_*\umu_m);\bal)\le \uObsdiam(X_m;\bal)\le \uObsdiam(X;\bal)=0.
\end{equation*}
Hence, $(3)\Rightarrow(2)$ implies that there exists $\{b_{i,m}\}_{i=1}^n\subset \R$ such that 
\begin{equation*}
(f_m)_*\umu_m(\{b_{i,m}\mid i\in I\})\ge \sum_{i\in I}\al_i
\end{equation*}
for any $I\subset \N_n$ and $m\in \N$. By (AP), there exists $\{x_{i,m}\}_{i=1}^n\subset X_m$ such that $f_m(x_{i,m})=b_{i,m}$ and $\umu_m(\{x_{i,m}\})=(f_m)_*\umu_m(\{b_{i,m}\})$ for any $i\in \N_n$ and $m\in \N$. Then, we have
\begin{equation*}
\umu_m(\{x_{i,m}\mid i\in I\})=(f_m)_*\umu_m(\{b_{i,m}\mid i\in I\})\ge \sum_{i\in I}\al_i.
\end{equation*}
Furthermore, it follows from Proposition \ref{epmm} that there exist $\ep_m\searrow 0$ and an $\ep_m$-mm-isomorphism $p_m:X_m\to X$ by $X_m\Boxto X$. Thus, Lemma \ref{convlem} implies $(2)$ as $\mu_m:=(p_m)_*\umu_m$ and $B_{i,m}:=\{p_m(x_{i,m})\}$.
\end{proof}
This completes the proof of Main Theorem \ref{atomultiobsdiam}.
\end{proof} 
\section{Properties of the multivariable diameters}
\label{Propmultidiam}

In this section, we give some results for the multivariable partial (observable) diameter.

%\section{properties of the multivariable diameters}
%In this section, we give relation the multivariable partial and observable diameters between the partial and observable diameters.
\begin{prop}
Let $X$ be an mm-space and let $\bal\in \A$. Then, we have
\begin{enumerate}
\item $\diam(X;\|\bal\|_{\infty})\le \udiam(X;\bal)\le \diam(X;\|\bal\|_1)$,
\item $\Obsdiam(X;\|\bal\|_{\infty})\le \uObsdiam(X;\bal)\le \Obsdiam(X;\|\bal\|_1)$.
\end{enumerate}
In particular, for any sequence $\{X_m\}_{m\in \N}$ of mm-spaces, 
\begin{equation*}
\lim_{m\to\infty}\uObsdiam(X_m;\bal)=0
\end{equation*}
for any $\bal\in \A$ with $\|\bal\|_1<1$ if and only if $\{X_m\}_{m\in \N}$ is a L\'{e}vy family.
\end{prop}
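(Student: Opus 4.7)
The plan is to first establish (1) and then derive (2) by applying (1) with $X$ replaced by the pushforward $(\R, f_{*}\mu_X)$ for each $f \in \Lip_1(X)$; the L\'{e}vy family characterization will then drop out of (2) by specializing $\bal = (\al)$. Before starting, I would reduce to the non-trivial case $\|\bal\|_1 \le 1$: if $\|\bal\|_1 > 1$ then no mm-space $Y$ admits disjoint Borel sets $A_i$ with $\mu_Y(A_i) \ge \al_i$ (as this would force $\sum_i \mu_Y(A_i) > 1$), so both $\udiam(X;\bal) = \infty$ and $\diam(X;\|\bal\|_1) = \infty$ and (1),(2) are vacuous. Under $\|\bal\|_1 \le 1$ the sequence $(\al_i)$ is summable, hence $\al_i \to 0$ when $n = \infty$, and consequently $\|\bal\|_\infty = \sup_i \al_i$ is attained at some index $i_0$. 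This attainment is the only mild technicality in the proof; without it one would have to prove some form of left-continuity of $\diam(X;\cdot)$, which I can avoid entirely.

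For the lower bound in (1), given any $(Y,F,\{A_i\}_{i=1}^n) \in \underline{D_X}(\bal)$, the set $\overline{F(A_{i_0})}$ is a closed, hence Borel, subset of $X$ with
\begin{equation*}
\mu_X\bigl(\overline{F(A_{i_0})}\bigr) \ge \mu_Y\bigl(F^{-1}(\overline{F(A_{i_0})})\bigr) \ge \mu_Y(A_{i_0}) \ge \al_{i_0} = \|\bal\|_\infty
\end{equation*}
and with $\diam \overline{F(A_{i_0})} \le \diam A_{i_0} \le \sup_i \diam A_i$, since $F$ is $1$-Lipschitz. Hence $\diam(X;\|\bal\|_\infty) \le \sup_i \diam A_i$, and taking the infimum over $\underline{D_X}(\bal)$ via Remark \ref{diamrem} yields $\diam(X;\|\bal\|_\infty) \le \udiam(X;\bal)$. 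For the upper bound I would use the measure-based characterization proved just above: for any Borel $A \subset X$ with $\mu_X(A) \ge \|\bal\|_1$, the constant choice $\mu_i := \mu_X(\,\cdot\, \cap A)/\mu_X(A)$ for every $i \in \N_n$ lies in $\M_X(\bal)$, because $\sum_i \al_i \mu_i = (\|\bal\|_1/\mu_X(A))\,\mu_X|_A \le \mu_X$; since each $\supp \mu_i \subset \overline{A}$, this gives $\udiam(X;\bal) \le \diam A$, and optimizing over $A$ produces $\udiam(X;\bal) \le \diam(X;\|\bal\|_1)$.

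Statement (2) is then immediate: apply (1) with $X$ replaced by $(\R, f_{*}\mu_X)$ for an arbitrary $f \in \Lip_1(X)$ and take the supremum over such $f$ on the outer quantities. For the L\'{e}vy family equivalence, the backward direction uses (2): if $\{X_m\}$ is L\'{e}vy then $\Obsdiam(X_m;\|\bal\|_1) \to 0$ whenever $\|\bal\|_1 < 1$, and the upper bound in (2) forces $\uObsdiam(X_m;\bal) \to 0$. For the forward direction, take $\bal = (\al)$ for each $\al \in (0,1)$, so that $\|\bal\|_1 = \al < 1$; by the remark after the definition of the multivariable diameters $\udiam$ reduces to $\diam$ when $n = 1$, whence $\uObsdiam(X_m;(\al)) = \Obsdiam(X_m;\al) \to 0$ by assumption, which is precisely the L\'{e}vy condition. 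I do not anticipate any serious obstacle beyond the attainment observation at the outset.
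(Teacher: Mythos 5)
Your proof is correct and follows essentially the same route as the paper's: the upper bound via the constant family $\mu_i:=(\mu_X)_A\in \M_X(\bal)$ is identical, and your lower bound via $\underline{D_X}(\bal)$ and the index $i_0$ attaining $\|\bal\|_\infty$ is just the set-based rephrasing of the paper's argument with $\M_X(\bal)$ and $\supp\mu_{i_0}$, while (2) and the L\'{e}vy equivalence are obtained exactly as intended (sup over $f\in\Lip_1(X)$, then $\bal=(\al)$ with $\udiam=\diam$ for $n=1$). Your explicit treatment of the $\|\bal\|_1>1$ reduction and of the attainment of $\sup_i\al_i$ only fills in details the paper leaves implicit.
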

\begin{proof}
Since it is easy to prove $(2)$ from $(1)$, we prove only $(1)$. 

First, we prove $\diam(X;\|\bal\|_{\infty})\le \udiam(X;\bal)$. If we take any $\{\mu_i\}_{i=1}^n\in \M_X(\bal)$ and set $i_0$ as $\al_{i_0}=\|\bal\|_{\infty}$, then 
\begin{equation*}
\mu_X(\supp \mu_{i_0})\ge \sum_{i=1}^n\al_i\mu_i(\supp \mu_{i_0})\ge \al_{i_0}=\|\bal\|_{\infty}
\end{equation*}
implies 
\begin{equation*}
\diam(X;\|\bal\|_{\infty})\le \diam \supp \mu_{i_0}\le \sup_{i\in \N_n}\diam \supp \mu_i.
\end{equation*}
Thus, we obtain $\diam(X;\|\bal\|_{\infty})\le \udiam(X;\bal)$.

Next,  we prove $\udiam(X;\bal)\le \diam(X;\|\bal\|_1)$. We take any $A\in \B_X$ with $\mu_X(A)\ge \|\bal\|_1$. We define probability measures $\{\mu_i\}_{i=1}^n$ as 
\begin{align*}
\mu_i:=(\mu_X)_A=\frac{\mu_X(\cdot\cap A)}{\mu_X(A)}
\end{align*}
for any $i\in \N_n$. Then, 
\begin{align*}
\mu_X\ge \mu_X(\cdot \cap A)\ge \sum_{i=1}^n\frac{\al_i}{\mu_X(A)}\mu_X(\cdot \cap A)=\sum_{i=1}^n\al_i\mu_i
\end{align*}
implies $\{\mu_i\}_{i=1}^n\in \M_X(\bal)$. In addition, $\supp \mu_i$ is a subset of $\overline{A}$ due to $\mu_i(\overline{A})=1$ for any $i\in \N_n$. Therefore, we get 
\begin{equation*}
\udiam(X;\bal)\le \sup_{i\in \N_n}\diam \supp \mu_i\le \diam \overline{A}=\diam A.
\end{equation*}
Thus, we obtain $\udiam(X;\bal)\le \diam(X;\|\bal\|_1)$.
\end{proof}
Next, we prove the lower-semicontinuity of the multivariable partial and observable diameters.
\begin{prop}
\label{partlsc}
Let $\{X_m\}_{m\in \N}$ be a sequence of mm-spaces and let $\bal\in \A$. If $X_m$ $\Box$-converges to $X\in \X$, then we have
\begin{equation*}
\udiam(X;\bal)\le \liminf_{m\to \infty}\udiam(X_m;\bal).
\end{equation*}
In other words, $\bal$-partial diameter is lower-semicontinuous with respect to the box topology.
\end{prop}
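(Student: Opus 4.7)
The plan is to use the measure-theoretic characterization $\udiam(X;\bal) = \inf\{\sup_{i\in\N_n}\diam\supp\mu_i\mid \{\mu_i\}_{i=1}^n\in \M_X(\bal)\}$ established earlier in Section \ref{multi}. We may assume $L := \liminf_{m\to\infty}\udiam(X_m;\bal) < \infty$ and, passing to a subsequence, that $\udiam(X_m;\bal)\to L$. For each $m$ choose $\{\mu_{i,m}\}_{i=1}^n \in \M_{X_m}(\bal)$ with $\sup_{i\in\N_n}\diam\supp\mu_{i,m}\le \udiam(X_m;\bal) + 1/m$. The goal is to transport these data to $X$ and produce a family in $\M_X(\bal)$ whose supports have diameter at most $L$.

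Since $X_m\Boxto X$, Proposition \ref{epmm} supplies $\ep_m\searrow 0$ and $\ep_m$-mm-isomorphisms $p_m\colon X_m\to X$ with good sets $\tX_m\subset X_m$ satisfying $\mu_{X_m}(\tX_m)\ge 1-\ep_m$, distortion $\le\ep_m$ on $\tX_m$, and $d_P((p_m)_*\mu_{X_m},\mu_X)\le\ep_m$. Set $\nu_{i,m} := (p_m)_*\mu_{i,m}\in\P(X)$. Pushing forward $\sum_i\al_i\mu_{i,m}\le \mu_{X_m}$ gives $\sum_i\al_i\nu_{i,m}\le (p_m)_*\mu_{X_m}$, and $(p_m)_*\mu_{X_m}\to\mu_X$ weakly by the Prokhorov bound. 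In particular $\{(p_m)_*\mu_{X_m}\}$ is tight, so the pointwise bound $\nu_{i,m}\le \al_i^{-1}(p_m)_*\mu_{X_m}$ makes each $\{\nu_{i,m}\}_m$ tight as well. By Prokhorov's theorem and a diagonal extraction, pass to a further subsequence so that $\nu_{i,m}\to\nu_i$ weakly in $\P(X)$ for every $i\in\N_n$. Testing $\sum_{i=1}^N\al_i\nu_{i,m}\le (p_m)_*\mu_{X_m}$ against non-negative bounded continuous functions, letting $m\to\infty$, and then passing $N\to n$ by monotone convergence, yields $\sum_{i=1}^n\al_i\nu_i\le\mu_X$, so $\{\nu_i\}_{i=1}^n\in\M_X(\bal)$.

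For the diameter control, observe that $\al_i\mu_{i,m}\le\mu_{X_m}$ and $\mu_{X_m}(X_m\setminus\tX_m)\le\ep_m$ give $\mu_{i,m}(\tX_m)\ge 1-\ep_m/\al_i$. Since $p_m$ is $\ep_m$-isometric on $\tX_m$, the closed set $T_{i,m} := \overline{p_m(\supp\mu_{i,m}\cap\tX_m)}$ satisfies $\diam T_{i,m}\le L+1/m+\ep_m$, and the inclusion $\supp\mu_{i,m}\cap\tX_m\subset p_m^{-1}(T_{i,m})$ gives $\nu_{i,m}(T_{i,m})\ge 1-\ep_m/\al_i$. By Proposition \ref{wHsub} and a further diagonal extraction, $T_{i,m}\to T_i$ weakly in $\F(X)$ for every $i$; Proposition \ref{wHconti} then yields $\diam T_i\le L$ and $\nu_i(T_i)\ge 1$, so $\supp\nu_i\subset T_i$ has diameter at most $L$. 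Therefore $\udiam(X;\bal)\le \sup_i\diam\supp\nu_i\le L$, as required. The statement for $\uObsdiam$ follows from the partial-diameter case together with the definition of $\uObsdiam$ and Proposition \ref{Boxwconv}.

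The principal difficulty is the last step: one must retain essentially all the mass of $\nu_{i,m}$ inside an $L$-diameter set as $m\to\infty$, which requires the quantitative bound $\mu_{i,m}(\tX_m)\ge 1-\ep_m/\al_i$ coming from the measure inequality, combined with the weak-convergence machinery for closed sets. Without this control, a naive triangle-inequality argument would only bound $\diam\supp\nu_i$ by $2L$, which is insufficient. The case $n=\infty$ presents no extra obstacle thanks to diagonalization for the tightness and closed-set convergences, and monotone convergence in the measure inequality.
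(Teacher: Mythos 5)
Your argument is correct and takes essentially the same route as the paper's proof: the characterization of $\udiam$ via $\M_X(\bal)$, the $\ep_m$-mm-isomorphisms from Proposition \ref{epmm}, tightness plus Prokhorov and a diagonal extraction for the pushed-forward measures, Propositions \ref{wHsub} and \ref{wHconti} for the closed sets $\overline{p_m(\supp\mu_{i,m}\cap\tX_m)}$, and the quantitative bound $\mu_{i,m}(\tX_m)\ge 1-\ep_m/\al_i$. The only deviations are cosmetic: you verify $\sum_i\al_i\nu_i\le\mu_X$ by testing against nonnegative bounded continuous functions rather than against closed sets via the Prokhorov metric and tightness, and your interim estimate $\diam T_{i,m}\le L+1/m+\ep_m$ should read $\udiam(X_m;\bal)+1/m+\ep_m$, which changes nothing in the limit.
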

\begin{proof}
We set $R:=\liminf_{m\to \infty}\udiam(X_m;\bal)$. By taking a subsequence, we may assume that $\udiam(X_m;\bal)$ converges to $R$. Then, for any $m\in \N$, there exists $\{\mu_{i,m}\}_{i=1}^n\in \M_{X_m}(\bal)$ such that 
\begin{equation*}
\lim_{m\to\infty}\sup_{i\in \N_n}\diam \supp \mu_{i,m}=R.
\end{equation*}
Furthermore, it follows from Proposition \ref{epmm} that there exist an $\ep_m$-mm-isomorphism $p_m:X_m\to X$ and a Borel subset $\tX_m\subset X_m$ such that $\ep_m\searrow 0$. Then,  a sequence $\{(p_m)_*\mu_{i,m}\}_{m\in \N}$ is tight for any $i\in \N_n$ because of the tightness of $\{(p_m)_*\mu_{X_m}\}_{m\in \N}$ and the definition of $\M_{X_m}(\bal)$. Hence, $\{(p_m)_*\mu_{i,m}\}_{m\in \N}$ has a weakly convergent subsequence by Prokhorov's theorem. Moreover, a sequence $\{A_{i,m}\}_{m\in \N}$ of closed subsets of $X$ defined as 
\begin{equation*}
A_{i,m}:=\overline{p_m(\tX_m\cap \supp \mu_{i,m})}
\end{equation*}
also has a weakly convergent subsequence due to Proposition \ref{wHsub}. By diagonal argument, there exist a subsequence $\{m_k\}_{k\in \N}$, closed subsets $\{A_i\}_{i=1}^n$ of $X$, and $\{\mu_i\}_{i=1}^n\subset \P(X)$ such that $(p_{m_k})_*\mu_{i,{m_k}}$ converges weakly to $\mu_i$ and $A_{i,m_k}$ converges weakly to $A_i$ as $k\to\infty$ for any $i\in \N_n$. Then, $\{\mu_i\}_{i=1}^n$ is an element in $\M_X(\bal)$, namely, the following inequality holds,
\begin{equation}
\label{measine}
\sum_{i=1}^n\al_i\mu_i\le \mu_X.
\end{equation}
\begin{proof}[Proof of $(\ref{measine})$]
We take any $\ep>0$ and $\ell\in \N$ with $\ell\le n$. Then, there exists a sufficiently large $k\in \N$ such that for any $i\in \N_\ell$, we have
\begin{align*}
d_P((p_{m_k})_*\mu_{i,{m_k}},\mu_i)<\ep,\\
d_P((p_{m_k})_*\mu_{X_{m_k}}),\mu_X)<\ep.
\end{align*}
For any closed subset $K\subset X$, 
\begin{align*}
\sum_{i=1}^{\ell}\al_i\mu_i(K)&\le \sum_{i=1}^{\ell}\al_i((p_{m_k})_*\mu_{i,{m_k}}(N_{\ep}(K))+\ep)\\
&=(p_{m_k})_*\left(\sum_{i=1}^{\ell}\al_i\mu_{i,{m_k}}\right)(N_{\ep}(K))+\ep\sum_{i=1}^{\ell}\al_i\\
&\le(p_{m_k})_*\mu_{X_{m_k}}(N_{\ep}(K))+\|\bal\|_1\ep\\
&=\mu_X(N_{2\ep}(K))+(1+\|\bal\|_1)\ep.
\end{align*}
By taking as $\ep\to 0$, we obtain 
\begin{equation*}
\sum_{i=1}^{\ell}\al_i\mu_i(K)\le \mu_X(K).
\end{equation*}
Next, for any Borel subset $B\subset X$, there exists a sequence $\{K_N\}_{N\in \N}$ of compact subsets of $X$ such that we have  
\begin{align*}
K_1\subset K_2\subset \cdots\subset K_N\subset \cdots\subset B,\\
\mu_i\left(B\setminus \bigcup_{N\in \N}K_N\right)=0\qquad(\forall i\in \N_{\ell})
\end{align*}
since $\mu_1,\ldots,\mu_{\ell}$ are tight. Therefore, we get 
\begin{align*}
\sum_{i=1}^{\ell}\al_i\mu_i(B)&=\sum_{i=1}^{\ell}\al_i\mu_i\left(B\setminus \bigcup_{N\in \N}K_N\right)+\lim_{N\to \infty}\sum_{i=1}^{\ell}\al_i\mu_i(K_N)\\
&\le 0+\lim_{N\to \infty}\mu_X(K_N)\le \mu_X(B).
\end{align*}
Thus, we obtain $(\ref{measine})$ by taking as $\ell\nearrow n$.
\end{proof} 
Moreover, Proposition \ref{wHconti} implies
\begin{align*}
\mu_i(A_i)&\ge \limsup_{k\to \infty}(p_{m_k})_*\mu_{i,{m_k}}(A_{i,m_k})\\
&\ge \limsup_{k\to \infty}\mu_{i,m_k}(\tX_{m_k}\cap \supp \mu_{i,{m_k}})=1,
\end{align*}
in particular, we have $\supp \mu_i\subset A_i$ for any $i\in \N_n$. Therefore, we obtain
\begin{align*}
\sup_{i\in\N_n}\diam A_i&\le \sup_{i\in\N_n}\liminf_{k\to\infty}\diam A_{i,m_k}\\
&\le \liminf_{k\to\infty}(\sup_{i\in\N_n}\diam \supp \mu_{i,{m_k}}+2\ep_{m_k})\le R.
\end{align*}
This completes the proof.
\end{proof}
\begin{prop}
\label{Obslsc}
Let $\{X_m\}_{m\in \N}$ be a sequence of mm-spaces and let $\bal\in \A$. If $X_m$ $\Box$-converges to $X\in \X$, then we have
\begin{equation*}
\uObsdiam(X;\bal)\le \liminf_{m\to \infty}\uObsdiam(X_m;\bal).
\end{equation*}
In other words, $\bal$-observable diameter is lower-semicontinuous with respect to the box topology.
\end{prop}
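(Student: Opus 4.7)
The plan is to reduce lower-semicontinuity of $\uObsdiam(\cdot;\bal)$ to the already-established lower-semicontinuity of $\udiam(\cdot;\bal)$ (Proposition \ref{partlsc}) applied on the real line. Since
\begin{equation*}
\uObsdiam(X;\bal)=\sup_{f\in\Lip_1(X)}\udiam((\R,f_*\mu_X);\bal),
\end{equation*}
it suffices to fix an arbitrary $f\in\Lip_1(X)$ and show that $\udiam((\R,f_*\mu_X);\bal)\le\liminf_{m\to\infty}\uObsdiam(X_m;\bal)$, after which taking the supremum over $f$ yields the conclusion.

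To do this, I would use Proposition \ref{epmm} to obtain, from $X_m\Boxto X$, an $\ep_m$-mm-isomorphism $p_m\colon X_m\to X$ with $\ep_m\searrow 0$ and a Borel subset $\tX_m\subset X_m$ with $\mu_{X_m}(\tX_m)\ge 1-\ep_m$ on which $p_m$ is $\ep_m$-isometric. Define the candidate approximation $f_m:=f\circ p_m\colon X_m\to\R$. Since $f$ is $1$-Lipschitz and $p_m$ is $\ep_m$-isometric on $\tX_m$, one has $|f_m(x)-f_m(y)|\le d_{X_m}(x,y)+\ep_m$ for $x,y\in\tX_m$. Proposition \ref{Lipappro} then upgrades $f_m$ to a genuine $1$-Lipschitz function $\tilde f_m\in\Lip_1(X_m)$ with $\dKF(f_m,\tilde f_m)\le\ep_m$.

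Next I would compare the measures $(\tilde f_m)_*\mu_{X_m}$ and $f_*\mu_X$ on $\R$ in the Prokhorov metric. By Proposition \ref{Prolem1}, $d_P((\tilde f_m)_*\mu_{X_m},(f_m)_*\mu_{X_m})\le\ep_m$. Since $(f_m)_*\mu_{X_m}=f_*(p_m)_*\mu_{X_m}$ and $d_P((p_m)_*\mu_{X_m},\mu_X)\le\ep_m$ by the defining property of an $\ep_m$-mm-isomorphism, Proposition \ref{Prolem2} (applied to the $1$-Lipschitz $f$) yields $d_P((f_m)_*\mu_{X_m},f_*\mu_X)\le\ep_m$. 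The triangle inequality gives $d_P((\tilde f_m)_*\mu_{X_m},f_*\mu_X)\le 2\ep_m\to 0$, and Proposition \ref{Boxwconv} converts this Prokhorov convergence into box convergence $(\R,(\tilde f_m)_*\mu_{X_m})\Boxto(\R,f_*\mu_X)$.

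Finally, applying Proposition \ref{partlsc} on $\R$ and using $\tilde f_m\in\Lip_1(X_m)$, we conclude
\begin{equation*}
\udiam((\R,f_*\mu_X);\bal)\le\liminf_{m\to\infty}\udiam((\R,(\tilde f_m)_*\mu_{X_m});\bal)\le\liminf_{m\to\infty}\uObsdiam(X_m;\bal).
\end{equation*}
Taking the supremum over $f\in\Lip_1(X)$ completes the proof. The main technical point is the passage from the approximate isometry $p_m$ to a true $1$-Lipschitz function on $X_m$ that is ``close'' to $f\circ p_m$; this is precisely what Proposition \ref{Lipappro} is designed for, so the argument should proceed without surprises. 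The rest is the standard bookkeeping that converts an $\ep_m$-mm-isomorphism into Prokhorov, and hence box, convergence of pushforward measures on $\R$.
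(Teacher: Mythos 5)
Your proposal is correct and follows essentially the same route as the paper's own proof: approximate $f\circ p_m$ by a genuine $1$-Lipschitz function via Proposition \ref{Lipappro}, control $d_P((\tilde f_m)_*\mu_{X_m},f_*\mu_X)$ by $2\ep_m$ using Propositions \ref{Prolem1} and \ref{Prolem2}, convert to box convergence on $\R$ via Proposition \ref{Boxwconv}, and conclude with Proposition \ref{partlsc} and the arbitrariness of $f$. No gaps to report.
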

\begin{proof}
We take any $f\in \Lip_1(X)$. From $X_m\Boxto X$ and Proposition \ref{epmm}, there exists an $\ep_m$-mm-isomorphism $p_m:X_m\to X$ for $\ep_m\searrow 0$. Since $f\circ p_m$ satisfies the assumption of Proposition \ref{Lipappro}, there exists $f_m\in \Lip_1(X_m)$ such that $\dKF(f\circ p_m,f_m)\le \ep_m$. Propositions \ref{Prolem1} and \ref{Prolem2} imply 
\begin{align*}
&\quad d_P((f_m)_*\mu_{X_m}, f_*\mu_X)\\
&\le d_P((f_m)_*\mu_{X_m},(f\circ p_m)_*\mu_{X_m})+d_P((f\circ p_m)_*\mu_{X_m},f_*\mu_X)\\
&\le \dKF(f_m,f\circ p_m)+d_P((p_m)_*\mu_{X_m},\mu_X)\\
&\le 2\ep_m,
\end{align*}
in particular, $(f_m)_*\mu_{X_m}$ converges weakly to $f_*\mu_X$. Therefore, Propositions \ref{Boxwconv} and \ref{partlsc} imply 
\begin{align*}
\udiam((\R,f_*\mu_X);\bal)&\le \liminf_{m\to\infty}\udiam((\R,(f_m)_*\mu_{X_m});\bal)\\
&\le \liminf_{m\to \infty}\uObsdiam(X_m;\bal).
\end{align*}
From the arbitrariness of $f$, we obtain the lower-semicontinuity of the $\bal$-observable diameter. 
\end{proof}
\begin{cor}[cf. \cite{Ozawa-Shioya}]
$\bal$-partial diameter and $\bal$-observable diameter are lower-semicontinuous with respect to the concentration topology.
\end{cor}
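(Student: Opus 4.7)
The plan is to reduce the concentration lower-semicontinuity to the already-established box lower-semicontinuity (Propositions \ref{partlsc} and \ref{Obslsc}). The mechanism is the following: if $X_m\concto X$ and $f\in\Lip_1(X)$, then the definition of $\dconc$ produces parameters $\ph_m$ of $X_m$ and $\psi_m$ of $X$ together with $f_m\in\Lip_1(X_m)$ such that $\dKF(f_m\circ\ph_m,f\circ\psi_m)\to 0$. Since $(f_m)_*\mu_{X_m}=(f_m\circ\ph_m)_*\LL^1$ and $f_*\mu_X=(f\circ\psi_m)_*\LL^1$, Proposition \ref{Prolem1} gives $d_P((f_m)_*\mu_{X_m},f_*\mu_X)\to 0$, and so $(\R,(f_m)_*\mu_{X_m})\Boxto(\R,f_*\mu_X)$ by Proposition \ref{Boxwconv}.

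For $\uObsdiam$, Proposition \ref{partlsc} applied on $\R$ then yields $\udiam((\R,f_*\mu_X);\bal)\le\liminf_m\udiam((\R,(f_m)_*\mu_{X_m});\bal)\le\liminf_m\uObsdiam(X_m;\bal)$, and taking the supremum over $f\in\Lip_1(X)$ finishes this case.

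For $\udiam$ the argument is the same but uses higher-dimensional targets. I would first apply Proposition \ref{findimappro} to obtain finite-dimensional approximations $X^{(k)}:=(\R^k,\|\cdot\|_\infty,\umu_k)\prec X$ with $X^{(k)}\Boxto X$. Combining Proposition \ref{multiobsdiamonotone}(1) with Proposition \ref{partlsc} along the $\prec$-increasing chain $X^{(k)}$ yields $\udiam(X;\bal)=\sup_k\udiam(X^{(k)};\bal)$. For each $k$, pick a dominating map $f^{(k)}=(f_1,\ldots,f_k)\colon X\to X^{(k)}$; each component $f_j$ lies in $\Lip_1(X)$, and the approximation procedure above produces $f_{j,m}\in\Lip_1(X_m)$ with $\dKF(f_{j,m}\circ\ph_m,f_j\circ\psi_m)\to 0$. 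Assembling $f_m^{(k)}:=(f_{1,m},\ldots,f_{k,m})\colon X_m\to(\R^k,\|\cdot\|_\infty)$ yields a $1$-Lipschitz map (since each component is $1$-Lipschitz), and a union bound on the exceptional sets gives joint Ky Fan convergence in the $\ell^\infty$ distance, hence by the $\R^k$-valued analog of Proposition \ref{Prolem1} weak convergence of the pushforwards. Propositions \ref{Boxwconv}, \ref{partlsc}, and \ref{multiobsdiamonotone}(1) then combine as
\begin{equation*}
\udiam(X^{(k)};\bal)\le\liminf_m\udiam((\R^k,(f_m^{(k)})_*\mu_{X_m});\bal)\le\liminf_m\udiam(X_m;\bal),
\end{equation*}
and the supremum over $k$ completes the proof.

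The main technical point is the componentwise-to-joint step: separate Ky Fan approximations $\dKF(f_{j,m}\circ\ph_m,f_j\circ\psi_m)\le\de_{j,m}$ must be packaged into an approximation of the $\R^k$-valued map in the $\ell^\infty$-based Ky Fan distance. This is handled by a routine union bound on the exceptional sets, together with the observation that the proof of Proposition \ref{Prolem1} goes through verbatim for $(\R^k,\|\cdot\|_\infty)$-valued functions. Extracting parameters that nearly realize the infimum in $\dconc$ is also standard.
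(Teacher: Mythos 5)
Your argument is correct, but it takes a genuinely different route from the paper's. The paper proves this corollary in one line, by combining the Lipschitz-order monotonicity of $\udiam$ and $\uObsdiam$ (Proposition \ref{multiobsdiamonotone}) with Lemma 3.1 of Esaki--Kazukawa--Mitsuishi, which encapsulates exactly the general principle that an invariant monotone under $\prec$ and lower semicontinuous in the box topology (here supplied by Propositions \ref{partlsc} and \ref{Obslsc}) is automatically lower semicontinuous in the concentration topology. What you do is reprove that principle by hand in this special case: you extract nearly optimal parameters from the definition of $\dconc$, transport each $f\in\Lip_1(X)$ (respectively each component of a dominating map onto the finite-dimensional approximations $X^{(k)}$ of Proposition \ref{findimappro}) across the Ky Fan--Hausdorff bound, pass to Prokhorov and hence box convergence of the pushforwards via Propositions \ref{Prolem1} and \ref{Boxwconv}, and conclude with Propositions \ref{partlsc} and \ref{multiobsdiamonotone}. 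The steps you flag as routine really are: a map into $(\R^k,\|\cdot\|_\infty)$ is $1$-Lipschitz exactly when its components are, the union bound turns componentwise Ky Fan closeness into $\ell^\infty$-valued Ky Fan closeness (with constant $k$, harmless since $k$ is fixed before $m\to\infty$), the proof of Proposition \ref{Prolem1} works verbatim for metric-space-valued maps, and $\udiam(X;\bal)=\sup_k\udiam(X^{(k)};\bal)$ follows from $X^{(k)}\prec X$, $X^{(k)}\Boxto X$, monotonicity, and box lower semicontinuity; the degenerate case $\|\bal\|_1>1$, where all quantities are $+\infty$, causes no trouble. In exchange for being longer, your proof is self-contained within the tools already developed in this paper and avoids the pyramid framework; the paper's citation-based proof is shorter and situates the statement in the general theory of invariants treated in Esaki--Kazukawa--Mitsuishi, which applies uniformly to any monotone box-lower-semicontinuous invariant.
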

This corollary follows from Proposition \ref{multiobsdiamonotone} and {\cite[Lemma 3.1]{Esaki-Kazukawa-Mitsuishi}}.

\section{Another definition of the multivariable partial and observable diameter}
\label{andef}

In this section, we give another definition of the multivariable partial and observable diameters and prove an analog of Main Theorem \ref{atomultiobsdiam} for these multivariable partial and observable diameters.
\begin{dfn}
Let $X$ be an mm-space and let $\bal\in \A$. We define the \emph{$\bal$-partial diameter $\diam'(X;\bal)$, $\diam''(X;\bal)$ of $X$}, and the \emph{$\bal$-observable diameter $\Obsdiam''(X;\bal)$ of $X$} by
\begin{align*}
\diam'(X;\bal)&:=
\inf\left\{\left.\sup_{i\in \N_n}\diam A_i\ \right| \ \{A_i\}_{i=1}^n\in D_X(\bal)\right\},\\
\diam''(X;\bal)&:=\sup_{\bal'\le \bal}\diam'(X;\bal'),\\
\Obsdiam''(X;\bal)&:=\sup\{\diam''((\R, f_*\mu_X);\bal)\mid f\in \Lip_1(X)\},
\end{align*}
where $\diam'(X;\bal)=0$ if $D_X(\bal)=\emptyset$, and $\bal'\le \bal$ means $n(\bal')=n(\bal)=n$ and $\al_i'\le \al_i$ for any $i\in \N_n$. 
\end{dfn}
\begin{lem}
\label{X}
Let $X$ be an mm-space and let $\bal\in \A$. If $\mu_X$ has a non-atomic part, namely, 
\begin{equation*}
\sum_{x\in X}\mu_X(\{x\})<1,
\end{equation*}
then there exists $\bal'\in \A$ such that $\bal'\le \bal$ and $D_X(\bal')\ne \emptyset$.
\end{lem}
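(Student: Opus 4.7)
The plan is to decompose $\mu_X$ into its atomic and non-atomic parts and partition the non-atomic part into disjoint Borel subsets of positive mass. Since $D_X(\bal') \ne \emptyset$ only requires disjoint Borel pieces with sufficiently small total mass, and the non-atomic part of $\mu_X$ has positive total mass, we can shrink each $\al_i$ enough to fit everything into that non-atomic part.

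First I would set $A_{\mu_X} := \{x \in X \mid \mu_X(\{x\}) > 0\}$, which is countable, and write $\mu_X = \mu_{\mathrm{at}} + \nu$, where
\[
\mu_{\mathrm{at}} := \sum_{x \in A_{\mu_X}} \mu_X(\{x\})\, \de_x
\]
and $\nu := \mu_X - \mu_{\mathrm{at}}$ is a non-atomic finite Borel measure on $X$. The hypothesis gives
\[
c := \nu(X) = 1 - \sum_{x \in X} \mu_X(\{x\}) > 0.
\]
Next I would define $\al_i' := \min(\al_i,\, 2^{-(i+1)} c)$ for every $i \in \N_n$. Each $\al_i'$ is strictly positive with $\al_i' \le \al_i$, and $\sum_{i \in \N_n} \al_i' \le c/2 < c$. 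Hence $\bal' := (\al_i')_{i \in \N_n} \in \A$ satisfies $\bal' \le \bal$.

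Finally I would invoke the standard fact that a non-atomic finite Borel measure on a Polish space can be partitioned into countably many disjoint Borel subsets of any prescribed non-negative masses whose sum is at most the total mass. Applied to $\nu$ and the masses $\al_i'$, this yields pairwise disjoint Borel sets $\{A_i\}_{i \in \N_n} \subset X$ with $\nu(A_i) = \al_i'$. Since $\mu_X \ge \nu$, we get $\mu_X(A_i) \ge \al_i'$, so $\{A_i\}_{i \in \N_n} \in D_X(\bal')$, which finishes the argument.

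The only subtle step is the splitting in the last paragraph. For finite $n$, it follows by induction from Sierpinski's intermediate value theorem for non-atomic measures, which states that any $t \in [0, \nu(X)]$ is realised as $\nu(B)$ for some Borel $B \subset X$: at each stage, the residual space retains $\nu$-mass $c - \sum_{j<i} \al_j' > c/2 > \al_i'$, so we can carve off the next $A_i$. For countable $n$, one iterates the same procedure, or equivalently invokes the Borel isomorphism of $(X, \nu)$ with $([0, c],\, \LL^1)$ and pulls back a partition of $[0, c]$ into subintervals of lengths $\al_i'$.
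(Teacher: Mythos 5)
Your proof is correct and follows essentially the same route as the paper: decompose $\mu_X$ into atomic and non-atomic parts, shrink each $\al_i$ by a geometric factor so the new weights sum to less than the non-atomic mass, and carve disjoint Borel sets of the prescribed measures out of the non-atomic part. The paper does this by pulling back subintervals under the Borel isomorphism $(X,\nu_X)\cong([0,\nu_X(X)],\LL^1)$ from Kechris (17.41), which is exactly the alternative you mention; your Sierpi\'{n}ski intermediate-value argument for finite $n$ is an equally valid realization of the same step.
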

\begin{proof}
We define $\nu_X$ as
\begin{equation*}
\nu_X:=\mu_X-\sum_{x\in X}\mu_X(\{x\})\de_x.
\end{equation*}
By the assumption, $\nu_X$ is a non-zero Borel measure on $X$. Then, there exists a Borel isomorphism $f:(X,\nu_X)\to ([0,\nu_X(X)],\LL^1|_{[0,\nu_X(X)]})$ satisfying $f_*\nu_X=\LL^1|_{[0,\nu_X(X)]}$ by \cite{Kechris}*{(17.41)}. Here, for any $i\in\N_n$, we set $\al_i':=\min\{\al_i,2^{-i}\nu_X(X)\}>0$ and $A_i:=f^{-1}([b_{i-1},b_i))$ where $b_0:=0$, $b_i:=\sum_{j=1}^{i}\al_j'$. Then, we obtain $\bal'\le \bal$ and $\{A_i\}_{i=1}^n\in D_X(\bal')$. 
\end{proof}
\begin{prop}
Let $X$ be an mm-space and let $\bal\in \A$. Then, $\diam''(X;\bal)=0$ is equivalent to either one of the following $(1)$ and $(2)$.
\begin{enumerate}
\item There are distinct points $\{x_i\}_{i=1}^n\subset X$ such that $\mu_X(\{x_i\})\ge \al_i$ for any $i\in \N_n$.
\item $D_X(\bal)=\emptyset$ and $\supp\mu_X$ consists of at most $n=n(\bal)$ points.
\end{enumerate}
\end{prop}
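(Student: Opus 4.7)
The plan is to split the equivalence into the easy backward direction and a more involved forward direction that reduces, via Lemma~\ref{multidiamzero}, to a Hall-type matching obstruction obtained by perturbing $\bal$. I first observe that (1) and (2) are mutually exclusive: given distinct $\{x_i\}$ as in (1), the singletons $A_i:=\{x_i\}$ lie in $D_X(\bal)$, contradicting the clause $D_X(\bal)=\emptyset$ of (2). For the $\Leftarrow$ direction, if (1) holds then these same singletons witness $\diam'(X;\bal')=0$ for every $\bal'\le\bal$, hence $\diam''(X;\bal)=0$. If (2) holds, fix $\bal'\le\bal$ with $D_X(\bal')\ne\emptyset$; each $A_i$ in such a decomposition must meet $\supp\mu_X$ because $\mu_X(A_i)\ge\al_i'>0$, and disjointness combined with $|\supp\mu_X|\le n$ forces $|\supp\mu_X|=n$ and each $A_i$ to capture exactly one support point, which one may replace by the corresponding singleton to achieve zero maximal diameter; the remaining case $D_X(\bal')=\emptyset$ is free from the convention $\diam'(X;\bal'):=0$.

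For the $\Rightarrow$ direction, the pivotal auxiliary property, which I call $(*)$, is that whenever $\bal'\le\bal$ has $D_X(\bal')\ne\emptyset$ one has $\diam'(X;\bal')=\diam(X;\bal')=0$, so Lemma~\ref{multidiamzero} yields distinct atoms $\{x_i^{(\bal')}\}_{i=1}^n\subset X$ with $\mu_X(\{x_i^{(\bal')}\})\ge\al_i'$. Applying $(*)$ to $\bal$ itself when $D_X(\bal)\ne\emptyset$ delivers (1). If instead $D_X(\bal)=\emptyset$, I establish (2) by contradiction: assume $|\supp\mu_X|>n$, sort $\bal$ so that $\al_1\ge\al_2\ge\cdots$ and sort the atom masses of $\mu_X$ as $p_1\ge p_2\ge\cdots$, and let $i^*$ be the smallest index with $\al_{i^*}>p_{i^*}$, which exists because (1) must fail. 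The hypothesis $|\supp\mu_X|>n$ forces at least one of the following: (i) $\mu_X$ has a non-atomic part of total mass $t>0$, in which case $\mu_X$ also has at least $n$ atoms by combining Lemma~\ref{X} with $(*)$, or (ii) $\mu_X$ has at least $n+1$ atoms.

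Finally, in either subcase I construct the perturbation $\bal''\le\bal$ defined by
\[
\al_i'':=\al_i\ (i<i^*),\qquad \al_{i^*}'':=p_{i^*}+\delta,\qquad \al_i'':=\min(\al_i,p_i)\ (i>i^*),
\]
where $\delta>0$ is chosen sufficiently small. The disjoint family $A_i:=\{z_i\}$ for $i\ne i^*$ together with $A_{i^*}:=\{z_{i^*}\}\cup E$ realizes $\{A_i\}\in D_X(\bal'')$, where $E\subset\supp\nu_X$ has $\nu_X$-mass $\delta$ and avoids the finitely many atoms $z_i$ in subcase (i), or $E:=\{z_{n+1}\}$ with $\delta\le p_{n+1}$ in subcase (ii). Applying $(*)$ to $\bal''$ then yields distinct atoms $\{y_i\}_{i=1}^n$ with $\mu_X(\{y_i\})\ge\al_i''$; since $\al_i''>p_{i^*}$ for every $i\le i^*$, each such $y_i$ must lie in $\{z_j:p_j>p_{i^*}\}$, a set of cardinality at most $i^*-1$ because $z_{i^*}$ itself has mass exactly $p_{i^*}$, contradicting the distinctness of $y_1,\ldots,y_{i^*}$. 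The case $n=\infty$ needs only subcase (i), since a purely atomic probability measure has countable support equal to, not exceeding, $n$. The main obstacle will be engineering $\bal''$ to be simultaneously in the regime $D_X(\bal'')\ne\emptyset$ (so that $(*)$ applies) and narrow enough at index $i^*$ to force the Hall-type failure; the non-atomic mass or the extra atom is precisely what absorbs the $\delta$-bump without requiring new atomic matching, and a small bookkeeping adjustment handles ties in atom masses by replacing $i^*-1$ with $|\{j:p_j>p_{i^*}\}|$, which remains strictly less than $i^*$.
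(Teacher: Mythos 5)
Your argument is correct and is essentially the paper's own proof: both directions rest on the same ingredients, namely Lemma \ref{multidiamzero} applied to perturbed tuples $\bal'\le\bal$ with $D_X(\bal')\ne\emptyset$, Lemma \ref{X} to handle a nontrivial non-atomic part, the minimal deficiency index $i^*$ for sorted $\bal$ and sorted atom masses (the paper's $i_0$), and a pigeonhole count of the atoms whose mass exceeds $p_{i^*}$; your $\delta$-bump with the set $E$ is just a small measurable piece of the paper's witness $B\cup\{y_{i_0}\}$, which is why you need the extra case split (i)/(ii) that the paper avoids. The only point where you are no more careful than the paper is $n=\infty$: the support of a purely atomic measure is the closure of its atom set and need not be countable, but the paper's proof glosses over exactly the same case, so this does not distinguish the two arguments.
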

\begin{proof}
It is easy to see that $\diam''(X;\bal)=0$ follows from either of $(1)$ and $(2)$. Therefore, we prove only that $\diam''(X;\bal)=0$ implies $(1)$ or $(2)$. In other words, it is sufficient to prove that $\diam''(X;\bal)=0$ and negation of $(1)$ imply $(2)$. Furthermore, we may assume that $\bal$ is non-increasing.

First, we see that $\diam''(X;\bal)=0$ and negation of $(1)$ imply $D_X(\bal)=\emptyset$ from Lemma \ref{multidiamzero}. 

Second, we prove that $\diam''(X;\bal)=0$ and negation of $(1)$ imply $|\supp\mu_X|\le n$. We set $A:=\{x\in X\mid \mu_X(\{x\})>0\}$ and 
\begin{equation*}
\nu_X:=\mu_X-\sum_{x\in A}\mu_X(\{x\})\de_x.
\end{equation*}
 If $|A|< n$ and $\nu_X(X)=0$, then we can get easily $|\supp\mu_X|=|A|< n$. If $|A|< n$ and $\nu_X(X)>0$, then $\diam''(X;\bal)=0$ implies that $D_X(\bal')=\emptyset$ for any $\bal'\le \bal$ by Lemma \ref{multidiamzero}. This contradicts Lemma \ref{X}. Thus, we may assume $|A|\ge n$. We take distinct points $\{y_i\}_{i=1}^n\subset A$ with $\mu_X(\{y_1\})\ge \mu_X(\{y_2\})\ge \cdots$ and $\mu(\{y_i\})\ge \mu(\{y\})$ for any $i\in \N_n$ and any $y\in X\setminus \{y_i\mid i\in \N_n\}$. Then, it follows from $D_X(\bal)=\emptyset$ that there exists $i\in \N_n$ such that $\mu_X(\{y_i\})<\al_i$, and we set 
\begin{equation*}
i_0:=\min\{i\in \N_n\mid \mu_X(\{y_i\})<\al_i\}.
\end{equation*}
If $|\supp\mu_X|>n$, then $B:=X\setminus \{y_i\mid i\in \N_n\}$ has positive measure. Here, if we set 
\begin{equation*}
(\al'_i, A_i):=
\begin{cases}
(\min\{\al_i,\mu_X(\{y_i\})\}, \{y_i\})\qquad&(i\ne i_0)\\
(\min\{\al_i,\mu_X(B\cup\{y_i\})\}, B\cup\{y_i\})\qquad&(i= i_0)
\end{cases}
,
\end{equation*}
then we have $\bal'\le \bal$ and $\{A_i\}_{i=1}^n\in D_X(\bal')$. Thus, by $\diam''(X;\bal)=0$, there are distinct points $\{x_i\}_{i=1}^n\subset X$ such that $\mu_X(\{x_i\})\ge \al_i'$ for any $i\in \N_n$. For any $i<i_0$, we get $\mu_X(\{x_i\})\ge \al_i'=\al_i\ge \al_{i_0}\ge \al_{i_0}'$ and $\mu_X(\{x_{i_0}\})\ge \al_{i_0}'$. However, the definitions of $\{y_i\}_{i=1}^n$ and $i_0$ imply 
\begin{equation*}
\{x\in X\mid \mu_X(\{x\})\ge \al_{i_0}'\}=\{y_1,\ldots,y_{i_0-1}\}.
\end{equation*}
It is a contradiction. Therefore, we have $|\supp\mu_X|\le n$, i.e., (2) holds.
\end{proof}
\begin{prop}[cf. Main Theorem \ref{atomultiobsdiam}]
\label{prop}
Let $X$ be an mm-space and let $\bal\in \A$ with $n=n(\bal)<\infty$. If $\Obsdiam''(X;\bal)=0$, then we have either $D_X(\bal)=\emptyset$ or there are distinct points $\{x_i\}_{i=1}^n\subset X$ such that $\mu_X(\{x_i\})\ge \al_i$ for any $i\in \N_n$.
\end{prop}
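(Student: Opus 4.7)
The plan is to mirror the proof of $(4)\Rightarrow(2)$ in Main Theorem \ref{atomultiobsdiam}, with the additional task of upgrading Main Theorem \ref{atomultiobsdiam}(2) to the stronger conclusion of distinct atoms in $X$ itself. First I would dispose of the case $D_X(\bal)=\emptyset$, which gives the conclusion immediately, so I may assume $D_X(\bal)\ne\emptyset$ and therefore $\|\bal\|_1\le 1$. By Proposition \ref{findimappro} I choose $X_m=(\R^m,\|\cdot\|_{\infty},\umu_m)\prec X$ with $X_m\Boxto X$. A variant of the argument in Proposition \ref{multiobsdiamonotone}(3), obtained by pre-composing each test $1$-Lipschitz function with the dominating map $X\to X_m$, shows that $\Obsdiam''$ is monotone along the Lipschitz order, so $\Obsdiam''(X_m;\bal)=0$ for every $m$.

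Applying Corollary \ref{atom1Lip} to each $X_m$ produces $f_m\in\Lip_1(X_m)$ satisfying property (AP), whence $\diam''((\R,(f_m)_*\umu_m);\bal)=0$. The immediately preceding proposition then gives, for each $m$, one of: (a) there exist $n$ distinct $b_{i,m}\in\R$ with $(f_m)_*\umu_m(\{b_{i,m}\})\ge\al_i$; or (b) $D_{(\R,(f_m)_*\umu_m)}(\bal)=\emptyset$ and $|\supp(f_m)_*\umu_m|\le n$. Via (AP), case (a) lifts to $n$ distinct atoms $\{x_{i,m}\}_{i=1}^n\subset X_m$ with $\umu_m(\{x_{i,m}\})\ge\al_i$, while in case (b) the measure $\umu_m$ is itself purely atomic with atoms $\{y_{j,m}\}_{j=1}^{k_m}$, $k_m\le n$, summing to total mass~$1$. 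Pass to a subsequence on which one case holds throughout and choose $\ep_m$-mm-isomorphisms $p_m:X_m\to X$ by Proposition \ref{epmm}, so that $(p_m)_*\umu_m\to\mu_X$ weakly.

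In case (a) I would apply Lemma \ref{convlem} with $\mu_m:=(p_m)_*\umu_m$, singletons $B_{i,m}:=\{p_m(x_{i,m})\}$, and $\ep_m=0$; the disjointness of $\{x_{i,m}\}$ in $X_m$ supplies the mass inequality $\mu_m(\bigcup_{i\in I}B_{i,m})\ge\sum_{i\in I}\al_i$. This yields $\{z_i\}_{i=1}^n\subset X$ with $\mu_X(\{z_i\mid i\in I\})\ge\sum_{i\in I}\al_i$ for every $I\subset\N_n$, i.e., condition (2) of Main Theorem \ref{atomultiobsdiam}. In case (b), after a further subsequence the masses of the $y_{j,m}$ converge to some $c_j\ge 0$; tightness of $\{(p_m)_*\umu_m\}_m$ guarantees that the positions $p_m(y_{j,m})$ converge to points $z_j\in X$, so $\mu_X=\sum_j c_j\de_{z_j}$ is purely atomic with at most $n$ distinct atoms summing to $1$. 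Then the disjoint sets $A_1,\ldots,A_n$ from $D_X(\bal)\ne\emptyset$ force $\mu_X$ to have exactly $n$ distinct atoms admitting a permutation matching $\bal$, finishing case (b).

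The hard part will be the distinctness upgrade in case (a): the limit points $z_i=\lim_m p_m(x_{i,m})$ may coincide in $X$ even when the $x_{i,m}$ are distinct in $X_m$, so the final step of Lemma \ref{multidiamzero} does not transfer verbatim. Here the hypothesis $D_X(\bal)\ne\emptyset$ must be used essentially: if some $z_i=z_j$, the common atom carries mass $\ge\al_i+\al_j$ but lies in at most one of the disjoint $A_k$, so the remaining $A_k$'s must support their $\ge\al_k$ mass elsewhere. Applying Corollary \ref{atom1Lip} to a further finite-dimensional approximation—and using that $\Obsdiam''(X;\bal')=0$ holds for every $\bal'\le\bal$—produces a separating $1$-Lipschitz function that forces each such residual mass to concentrate on a single atom of the required size, yielding the $n$ distinct atoms $\{x_i\}_{i=1}^n$ with $\mu_X(\{x_i\})\ge\al_i$.
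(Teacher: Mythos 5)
Your reduction and your case (b) are essentially sound: monotonicity of $\Obsdiam''$ along the Lipschitz order, property (AP) forcing $\umu_m$ to be purely atomic with at most $n$ atoms when $(f_m)_*\umu_m$ has support of size at most $n$, and the pigeonhole argument combining ``$\mu_X$ purely atomic with at most $n$ atoms'' with the disjoint sets from $D_X(\bal)\ne\emptyset$ all work. The genuine gap is exactly where you flag it, in case (a): Lemma \ref{convlem} only yields points $z_i$ satisfying the joint-mass condition of Main Theorem \ref{atomultiobsdiam}(2), and that condition together with $D_X(\bal)\ne\emptyset$ does \emph{not} imply the conclusion (e.g.\ a measure with one atom of mass $1/2$ plus a diffuse part of mass $1/2$ satisfies both for $\bal=(1/4,1/4)$ but has no two distinct atoms of mass $\ge 1/4$; of course its $\Obsdiam''$ is positive, which shows that the hypothesis $\Obsdiam''(X;\bal)=0$ must be re-used in an essential, structured way at this point). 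Your closing sketch --- ``a separating $1$-Lipschitz function forces each residual mass to concentrate on a single atom of the required size'' --- is not substantiated: nothing in your setup prevents the distinct atoms $x_{i,m}\in X_m$ from merging in the limit, and the observation that $\Obsdiam''(X;\bal')=0$ for all $\bal'\le\bal$ is automatic from the definition of $\diam''$ and by itself produces no separation.

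This is precisely the difficulty the paper's proof is built around, and it avoids your case split altogether. Lemma \ref{lemC} constructs special approximations $Y_m=(\R^{n+m},\|\cdot\|_\infty,\mu_m)$ by appending the $n$ distance functions $d_X(\cdot,K_i)$ to compact pieces $K_i\subset A_i$ of a fixed element of $D_X(\bal)$; this keeps the images of the $K_i$ uniformly separated, so $D_{Y_m}(\bal-\bep_m)\ne\emptyset$, and Lemma \ref{lemB} transfers this nonemptiness through a function with (AP) to $(\R,(f_m)_*\mu_m)$, so Lemma \ref{multidiamzero} always produces $n$ distinct atoms in $Y_m$ (no purely-atomic alternative needed). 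The distinctness of the limit atoms is then obtained by Lemma \ref{lemA}: one extracts a subsequence along which the atoms are coherent under the coordinate projections $p_{m,k}$, and since these projections are $1$-Lipschitz the mutual distances $d(x_{i,m_k},x_{j,m_k})$ are non-decreasing in $k$, hence bounded below by the positive separation at the first stage, which survives the $\ep$-mm-isomorphisms to $X$. Your proposal contains no analogue of Lemma \ref{lemA} (coherence preventing atoms from collapsing) or of Lemma \ref{lemC} (forcing nonemptiness of $D$ in the approximations), and without some such mechanism the final step of case (a) does not go through.
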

To prove this proposition, we prepare several lemmas.
\begin{lem}
\label{lemA}
Let $\{(X_m,\mu_m)\}_{m\in \N}$ be a sequence of probability measure spaces and let $\{p_{m,k}:X_k\to X_m\}_{m\le k}$ be a family of measurable maps satisfying the following $(1)$, $(2)$, and $(3)$.
\begin{enumerate}
\item For any $m\in \N$, $p_{m,m}=\id_{X_m}$.
\item For any $m\le k\le \ell$, $p_{m,\ell}=p_{m,k}\circ p_{k,\ell}$.
\item For any $m\le k$, $(p_{m,k})_*\mu_k=\mu_m$.
\end{enumerate}
Suppose that there exist $\al>0$ and $\{x_m\}_{m\in \N}\in \prod_{m\in \N}X_m$ such that $\mu_m(\{x_m\})\ge \al$ for any $m\in \N$, then there exists a subsequence $\{m_k\}_{k\in \N}$ such that $p_{m_k,m_{k+1}}(x_{m_{k+1}})=x_{m_k}$ for any $k\in \N$.
\end{lem}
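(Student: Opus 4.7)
The plan is to encode the problem as an order-theoretic question about atom ancestries, and combine a pigeonhole/Dilworth-type argument with a measure-disjointness trick. Set $N:=\lfloor 1/\alpha\rfloor$ and let $X_m^{\ast}:=\{x\in X_m:\mu_m(\{x\})\ge\alpha\}$, which has at most $N$ elements because $\mu_m$ is a probability measure. The hypothesis gives $x_m\in X_m^{\ast}$, and condition (3) forces $p_{m,k}(X_k^{\ast})\subset X_m^{\ast}$, since for $y\in X_k^{\ast}$ one has $\mu_m(\{p_{m,k}(y)\})\ge\mu_k(\{y\})\ge\alpha$. To each $x_m$ I attach the compatible ancestry sequence
\[
R_m:=(p_{1,m}(x_m),p_{2,m}(x_m),\ldots,p_{m,m}(x_m))\in\prod_{i\le m}X_i^{\ast}.
\]
By the tower property (2), requiring $p_{m_k,m_{k+1}}(x_{m_{k+1}})=x_{m_k}$ along a subsequence is equivalent to each $R_{m_k}$ being a prefix of $R_{m_{k+1}}$. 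So the lemma reduces to producing an infinite chain in the poset $(\{R_m\}_{m\in\N},\preceq)$ ordered by the prefix relation.

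Next I bound the width of this poset. Suppose $R_{m_1},\ldots,R_{m_L}$ form an antichain with $m_1<\cdots<m_L$; I claim $L\le N$. For $i<j$, the antichain hypothesis is equivalent to $p_{m_i,m_j}(x_{m_j})\ne x_{m_i}$. Put $M:=m_L$ and consider the preimages $P_i:=p_{m_i,M}^{-1}(\{x_{m_i}\})\subset X_M$. For any $y\in P_j$ one has $p_{m_j,M}(y)=x_{m_j}$ (the edge case $j=L$ included, since then $P_L=\{x_{m_L}\}$ and $p_{m_L,M}=\id$). Hence if $y\in P_i\cap P_j$ for some $i<j$, the tower property yields
\[
p_{m_i,m_j}(x_{m_j})=p_{m_i,m_j}(p_{m_j,M}(y))=p_{m_i,M}(y)=x_{m_i},
\]
contradicting the antichain property. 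Thus the $P_i$'s are pairwise disjoint, and (3) gives $\mu_M(P_i)=\mu_{m_i}(\{x_{m_i}\})\ge\alpha$, so $L\alpha\le\sum_i\mu_M(P_i)\le 1$ and therefore $L\le N$.

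To finish, I invoke Dilworth's theorem for posets of finite width (equivalently, the infinite Ramsey theorem for pairs, which rules out an infinite antichain once antichains are bounded by $N$): since every antichain of $\{R_m\}_{m\in\N}$ has size at most $N$, the infinite poset is a finite union of chains, so by pigeonhole some chain contains $R_m$ for infinitely many $m$. Enumerating those values as $m_1<m_2<\cdots$ gives $R_{m_k}\preceq R_{m_{k+1}}$, which is exactly $p_{m_k,m_{k+1}}(x_{m_{k+1}})=x_{m_k}$, as required. The main obstacle I anticipate is the disjointness/width bound in the middle paragraph---that is the only place where the measure hypothesis $\mu_m(\{x_m\})\ge\alpha$ actually enters; the rest is order-theoretic bookkeeping once that input is in place.
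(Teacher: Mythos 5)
Your proof is correct, and it takes a genuinely different route from the paper's. The paper proves an iterated infinite-pigeonhole claim: for every infinite index set $A\subset\N$ there is $m\in A$ such that $p_{m,k}(x_k)=x_m$ for infinitely many $k\in A$; this is shown by contradiction, building points $y_i\ne x_{k_i}$ onto which mass accumulates ($\mu_{k_{N-\ell}}(\{y_{N-\ell}\})\ge \ell\alpha$) until a probability exceeds $1$, and the subsequence is then obtained by applying the claim repeatedly to nested infinite sets. You instead encode compatibility as the prefix order on the ancestry sequences $R_m$, bound every antichain by $\lfloor 1/\alpha\rfloor$ via the pairwise disjointness of the preimages $P_i=p_{m_i,M}^{-1}(\{x_{m_i}\})$ in the single space $X_M$, each of mass $\mu_{m_i}(\{x_{m_i}\})\ge\alpha$ by (3), and then extract an infinite chain by infinite Dilworth or Ramsey for pairs. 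The measure-theoretic core is the same in both arguments (at most $\lfloor 1/\alpha\rfloor$ pairwise incompatible atoms can coexist), but your realization of the masses as disjoint preimages in one common space is cleaner than the paper's accumulation of mass onto a single point, while the paper's nested pigeonhole is self-contained and avoids invoking Dilworth/Ramsey. Two minor remarks: your preliminary observation that $p_{m,k}(X_k^{\ast})\subset X_m^{\ast}$ and $|X_m^{\ast}|\le N$ tacitly assumes all singletons are measurable, but it is decorative and unused --- the essential steps only need measurability of the singletons $\{x_m\}$, which is implicit in the hypothesis (and used by the paper as well); and the infinite Dilworth theorem for posets of finite width is not literally ``equivalent'' to Ramsey for pairs, though either one finishes the argument, the Ramsey dichotomy (an infinite homogeneous set cannot be an antichain, hence is a chain) being the more elementary citation.
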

\begin{proof}
We firstly prove the next claim.
\begin{claim}
\label{claim L}
For any infinity set $A\subset \N$, there exists $m\in A$ such that $\{k\in A\mid \text{$k>m$ and $p_{m,k}(x_k)=x_m$}\}$ is infinite.
\end{claim} 
\begin{proof}[Proof of Claim \ref{claim L}]
Suppose that there exists an infinite set $A\subset \N$ such that $\{k\in A\mid \text{$k>m$ and $p_{m,k}(x_k)=x_m$}\}$ is finite for any $m\in A$. 

If we set $k_1:=\min A$, then $\{p_{k_1,k}(x_k)\mid \text{$k\in A$ with $k>k_1$}\}$ is a finite subset of $X_{k_1}$ since $\mu_{k_1}(\{p_{k_1,k}(x_k)\})\ge \mu_k(\{x_k\})\ge \al$ for any $k\in A$ with $k>k_1$. Hence, there exists $y_1\in X_{k_1}$ such that 
\begin{equation*}
A_1:=\{k\in A\mid \text{$k>k_1$ and $p_{k_1,k}(x_k)=y_1$}\}
\end{equation*}
is infinite. By the assumption, we have $y_1\ne x_{k_1}$ and set $k_2:=\min A_1$. 

Next, $\{p_{k_2,k}(x_k)\mid \text{$k\in A_1$ with $k>k_2$}\}$ is a finite subset of $X_{k_2}$ since $\mu_{k_2}(\{p_{k_2,k}(x_k)\})\ge \mu_k(\{x_k\})\ge \al$ for any $k\in A_1$ with $k>k_2$. Hence, we see that there exists $y_2\in X_{k_2}$ such that 
\begin{equation*}
A_2:=\{k\in A_1\mid \text{$k>k_2$ and $p_{k_2,k}(x_k)=y_2$}\}
\end{equation*}
is infinite. By the assumption, we have $y_2\ne x_{k_2}$ and set $k_3:=\min A_2$.

Repeating this argument, we can find finite sequences 
\begin{equation*}
\text{$k_1<k_2<k_3<\cdots<k_N$ \quad and \quad$\{y_i\}_{i=1}^{N-1}\in \prod_{i=1}^{N-1}X_{k_i}$},
\end{equation*}
where $N\in \N$ satisfies $\al N>1$. Then, we obtain $\mu_{k_{N-\ell}}(\{y_{N-\ell}\})\ge \ell\al$ for any $\ell=1,\ldots,N-1$. Indeed, we have 
\begin{equation*}
\mu_{k_{N-1}}(\{y_{N-1}\})=\mu_{k_{N-1}}(\{p_{k_{N-1},k_N}(x_{k_N})\})\ge \al.
\end{equation*}
In addition, if $\mu_{k_{N-\ell}}(\{y_{N-\ell}\})\ge \ell\al$, then it follows from $x_{k_{N-\ell}}\ne y_{N-\ell}$ and 
\begin{equation*}
p_{k_{N-\ell-1},k_{N-\ell}}(x_{k_{N-\ell}})=p_{k_{N-\ell-1},k_{N-\ell}}(y_{N-\ell})=y_{N-\ell-1}
\end{equation*}
that
\begin{align*}
\mu_{k_{N-\ell-1}}(\{y_{N-\ell-1}\})&\ge \mu_{k_{N-\ell}}(\{x_{k_{N-\ell}}\})+\mu_{k_{N-\ell}}(\{y_{N-\ell}\})\\
&\ge \al+\ell\al=(\ell+1)\al.
\end{align*}
Thus, we obtain $\mu_{k_1}(\{y_1\})\ge (N-1)\al$ and 
\begin{equation*}
\mu_{k_1}(\{x_{k_1},y_1\})\ge \al+(N-1)\al=N\al>1.
\end{equation*}
This contradicts that $\mu_{k_1}$ is a probability measure. Therefore, Claim \ref{claim L} holds.
\end{proof}
For $A=\N$, Claim \ref{claim L} implies that there exists $m_1\in \N$ such that $B_1:=\{k\in \N\mid \text{$k>m_1$ and $p_{m_1,k}(x_k)=x_{m_1}$}\}$ is infinite. Next, for $A=B_1$, Claim \ref{claim L} also implies that there exists $m_2\in B_1$ such that $B_2:=\{k\in B_1\mid \text{$k>m_2$ and $p_{m_2,k}(x_k)=x_{m_2}$}\}$ is infinite. A sequence $\{m_k\}_{k\in \N}$ obtained by repeatedly applying Claim \ref{claim L} in this way satisfies $p_{m_{k+1},m_k}(x_{m_{k+1}})=x_{m_k}$ for any $k\in \N$. This completes the proof.
\end{proof}
\begin{lem}
\label{lemB}
Let $m$ be a natural number and let $\mu$ be a Borel probability measure on $\R^m$. If a $1$-Lipschitz function $f:X\to \R$ satisfies $(AP)$ of Corollary \ref{atom1Lip} and $D_X(\bal)\ne \emptyset$ where $X:=(\R^m,\|\cdot\|_{\infty}, \mu)$, then we have $D_{(\R,f_*\mu)}(\bal)\ne \emptyset$.
\end{lem}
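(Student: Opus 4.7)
The plan is to use $(AP)$ to show that $\mu$-atoms inject into $f_*\mu$-atoms, so that the atomic part of $f_*\mu$ is accounted for exactly by $\mu$-atoms, and then handle the atomic portion of a given family $\{A_i\}_{i=1}^n \in D_X(\bal)$ by direct push-forward and the non-atomic portion by a measure-splitting argument in the style of Lemma \ref{X}.

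First I would unpack $(AP)$. Let
\begin{equation*}
E:=\{x\in \R^m\mid \mu(\{x\})>0\},\qquad B:=\{b\in \R\mid f_*\mu(\{b\})>0\};
\end{equation*}
both are countable. For each $b\in B$, $(AP)$ yields $x_b\in f^{-1}(\{b\})$ with $\mu(\{x_b\})=f_*\mu(\{b\})=\mu(f^{-1}(\{b\}))$, so $\mu(f^{-1}(\{b\})\setminus\{x_b\})=0$. Hence every $\mu$-atom $x$ lies in a fibre $f^{-1}(\{f(x)\})$ of positive mass, whence $f(x)\in B$ and $x=x_{f(x)}$. Therefore $f|_E$ is injective with image $B$, and
\begin{equation*}
\mu(E)=\sum_{b\in B}\mu(\{x_b\})=\sum_{b\in B}f_*\mu(\{b\})=f_*\mu(B).
\end{equation*}
In particular, $f_*\mu|_{\R\setminus B}$ is non-atomic with total mass $1-\mu(E)$.

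Next, for $\{A_i\}_{i=1}^n\in D_X(\bal)$ I split each $A_i$ along $E$: put $E_i:=A_i\cap E$ and $a_i:=\mu(A_i\setminus E)$. The sets $D_i:=f(E_i)\subset B$ are countable (hence Borel), pairwise disjoint by the injectivity of $f|_E$, and satisfy $f_*\mu(D_i)=\sum_{b\in D_i}\mu(\{x_b\})=\mu(E_i)$. Since the sets $A_i\setminus E$ are pairwise disjoint and contained in $\R^m\setminus E$, we obtain
\begin{equation*}
\sum_{i\in \N_n}a_i\le 1-\mu(E)=f_*\mu(\R\setminus B).
\end{equation*}
Because $f_*\mu|_{\R\setminus B}$ is a non-atomic finite Borel measure, the Borel isomorphism with an interval equipped with Lebesgue measure (exactly as in the proof of Lemma \ref{X}) gives pairwise disjoint Borel sets $C_i\subset \R\setminus B$ with $f_*\mu(C_i)=a_i$. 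Setting $B_i:=D_i\cup C_i$ produces a pairwise disjoint Borel family in $\R$ with
\begin{equation*}
f_*\mu(B_i)=\mu(E_i)+a_i=\mu(A_i)\ge \al_i,
\end{equation*}
so $\{B_i\}_{i=1}^n\in D_{(\R,f_*\mu)}(\bal)$.

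The only real subtlety is the first step: $(AP)$ is needed to rule out any ``extra'' atomic mass of $f_*\mu$ not coming from $\mu$-atoms of $X$, which is what guarantees the estimate $\sum_i a_i\le 1-\mu(E)$ so that the non-atomic part of $f_*\mu$ has enough mass to absorb the non-atomic contributions of the $A_i$. Once this bookkeeping is in place, the construction of $B_i$ is routine.
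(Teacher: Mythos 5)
Your proof is correct and follows essentially the same route as the paper: (AP) makes $f$ restricted to the set of atoms of $\mu$ a measure-preserving bijection onto the atoms of $f_*\mu$, each $A_i$ is split into its atomic part (pushed forward directly) and its non-atomic part, and the non-atomic part of $f_*\mu$, of total mass $1-\mu(E)$, supplies the remaining mass. The only cosmetic difference is in how that non-atomic mass is allocated: the paper uses the continuity of the distribution function of the non-atomic part to cut out intervals $(t_{i-1},t_i)$ minus the atom set, while you invoke the Borel isomorphism with Lebesgue measure as in Lemma \ref{X}; both devices work equally well.
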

\begin{proof}
We take $\{A_i\}_{i=1}^n\in D_X(\bal)$ and set $A:=\{x\in X\mid \mu(\{x\})>0\}$ and $A':=\{b\in \R\mid f_*\mu(\{b\})>0\}$. Then, it follows from (AP) of Corollary \ref{atom1Lip} that $f|_A:A\to A'$ is bijective and measure-preserving. If we define a finite measure $\nu$ on $\R$ as
\begin{equation*}
\nu:=f_*\mu-\sum_{b\in A'}f_*\mu(\{b\})\de_b,
\end{equation*}
then $\nu$ satisfies $\nu(\R)=1-f_*\mu(A')=1-\mu(A)$, and $\nu((-\infty, \cdot))$ is a non-decreasing and continuous function. We see that there exists a non-decreasing sequence $\{t_k\}_{k=1}^n\subset [-\infty,\infty]$ such that for any $k\in \N_n$,
\begin{equation*}
\nu((-\infty, t_k))=\mu\left(\bigcup_{i=1}^k(A_i\setminus A)\right).
\end{equation*}
If we define $A_i':=((t_{i-1},t_i)\setminus A')\cup f(A_i\cap A)$ for any $i\in \N_n$ where $t_0:=-\infty$, then $\{A_i'\}_{i=1}^n\in D_{(\R,f_*\mu)}(\bal)$. Indeed, $A_i'$ is Borel since $A$ is finite, and $\{A_i'\}_{i=1}^n$ are disjoint since $\{A_i\}_{i=1}^n$ and $\{(t_{i-1},t_i)\}_{i=1}^n$ are disjoint. In addition, we have 
\begin{align*}
f_*\mu(A_i')&=f_*\mu(((t_{i-1},t_i)\setminus A')\cup f(A_i\cap A))\\
&=f_*\mu((t_{i-1},t_i)\setminus A')+f_*\mu(f(A_i\cap A))\\
&=\nu((t_{i-1},t_i))+\mu(A_i\cap A)\\
&=\mu\left(\bigcup_{k=1}^i(A_k\setminus A)\right)-\mu\left(\bigcup_{k=1}^{i-1}(A_k\setminus A)\right)+\mu(A_i\cap A)\\
&=\mu\left(A_i\setminus A\right)+\mu(A_i\cap A)\\
&=\mu(A_i)\ge \al_i
\end{align*}
for any $i\in \N_n$. This completes the proof.
\end{proof}
\begin{lem}
\label{lemC}
Let $X$ be an mm-space and let $\bal\in \A$ with $n=n(\bal)<\infty$ and $D_X(\bal)\ne \emptyset$. For any $m\in \N$ and $\ep_m\in (0,\min\{\al_1,\ldots,\al_n\})$, there exists $\mu_m\in \P(\R^{n+m})$ such that we have
\begin{enumerate}
\item $Y_1\prec Y_2\prec\cdots\prec Y_m\prec\cdots \prec X$,\item $Y_m\Boxto X$ \qquad $(m\to\infty)$,
\item $D_{Y_m}(\bal-\bep_m)\ne \emptyset$,
\end{enumerate}
where $Y_m:=(\R^{n+m},\|\cdot\|_{\infty},\mu_m)$ and $\bal-\bep_m:=(\al_1-\ep_m,\ldots,\al_n-\ep_m)$.
\end{lem}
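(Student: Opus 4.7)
The plan is to realize each $Y_m$ as the pushforward of $\mu_X$ under a $1$-Lipschitz map $F_m:X\to(\R^{n+m},\|\cdot\|_\infty)$ whose first $n$ coordinates encode a disjoint family witnessing $D_X(\bal)\ne\emptyset$ and whose last $m$ coordinates carry out the standard finite-dimensional approximation of Proposition \ref{findimappro}.

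Fix $\{A_i\}_{i=1}^n\in D_X(\bal)$. For each $m$, inner regularity of $\mu_X$ on the Polish space $X$ yields a compact $K_i^{(m)}\subset A_i$ with $\mu_X(K_i^{(m)})\ge\al_i-\ep_m$. Since the $K_i^{(m)}$'s are disjoint compacta, $\delta_m:=\min_{i\ne j}d_X(K_i^{(m)},K_j^{(m)})>0$, and the $1$-Lipschitz bumps
\[
g_i^{(m)}(x):=(\delta_m/2-d_X(x,K_i^{(m)}))_+
\]
satisfy $g_i^{(m)}\equiv\delta_m/2$ on $K_i^{(m)}$ and $g_i^{(m)}\equiv 0$ on $\bigcup_{j\ne i}K_j^{(m)}$. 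Pick a dense sequence $(x_k)_{k\in\N}\subset\supp\mu_X$, set $\varphi_k:=d_X(\cdot,x_k)\in\Lip_1(X)$, and define
\[
F_m(x):=(g_1^{(m)}(x),\ldots,g_n^{(m)}(x),\varphi_1(x),\ldots,\varphi_m(x)),\qquad \mu_m:=(F_m)_*\mu_X,
\]
with $Y_m:=(\R^{n+m},\|\cdot\|_\infty,\mu_m)$. Since $F_m$ is $1$-Lipschitz for $\ell^\infty$, $Y_m\prec X$ via $F_m$.

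For property (3), put $A_i^m:=\{y\in\R^{n+m}:y_i\ge\delta_m/4\text{ and }y_j<\delta_m/4\text{ for every }j<i\}$. These are pairwise disjoint Borel sets (if $i<j$ then $y\in A_i^m$ forces $y_i\ge\delta_m/4$ while $y\in A_j^m$ forces $y_i<\delta_m/4$), and $F_m(K_i^{(m)})\subset A_i^m$, hence $\mu_m(A_i^m)\ge\mu_X(K_i^{(m)})\ge\al_i-\ep_m$, giving $\{A_i^m\}\in D_{Y_m}(\bal-\bep_m)$. Property (2) follows from the squeeze Proposition \ref{mmsqueeze}: projecting off the first $n$ coordinates shows $X_m^{\rm std}:=(\R^m,\|\cdot\|_\infty,(\varphi_1,\ldots,\varphi_m)_*\mu_X)\prec Y_m$, and for this choice of $\varphi_k$ Proposition \ref{findimappro} gives $X_m^{\rm std}\Boxto X$, so sandwiching $X_m^{\rm std}\prec Y_m\prec X$ forces $Y_m\Boxto X$.

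The main obstacle is property (1), the chain $Y_m\prec Y_{m+1}$. If the $g_i^{(m)}$'s were independent of $m$, projection onto the first $n+m$ coordinates of $\R^{n+m+1}$ would immediately give a dominating map; the difficulty is that to accommodate $\ep_m\to 0$ the sets $K_i^{(m)}$ must vary with $m$. I would resolve this by taking $\ep_m$ nonincreasing without loss of generality, making the $K_i^{(m)}$'s nested inside $A_i$, and interleaving the $g_i^{(m)}$'s into the dense list $(\varphi_k)$ so that each $g_i^{(m)}$ already appears among the coordinates of $F_{m+1}$. The assignment $F_{m+1}(x)\mapsto F_m(x)$ on $F_{m+1}(\supp\mu_X)\subset\R^{n+m+1}$ is then well-defined and $1$-Lipschitz in the $\ell^\infty$-metric; McShane's extension theorem applied coordinate-wise extends it to a $1$-Lipschitz map $q_m:\R^{n+m+1}\to(\R^{n+m},\|\cdot\|_\infty)$ with $q_m\circ F_{m+1}=F_m$, whence $(q_m)_*\mu_{m+1}=\mu_m$ and $Y_m\prec Y_{m+1}$.
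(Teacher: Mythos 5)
Your overall architecture is the same as the paper's: first $n$ coordinates built from distances to compact sets $K_i\subset A_i$ of measure $\ge\al_i-\ep_m$, the remaining coordinates taken from the finite-dimensional approximation of Proposition \ref{findimappro}, property (3) read off from the images of the $K_i$'s, and property (2) obtained by squeezing $X_m^{\rm std}\prec Y_m\prec X$ via Proposition \ref{mmsqueeze}; those parts are fine. The genuine gap is in property (1), the very step you call the main obstacle. Your factor map $F_{m+1}(x)\mapsto F_m(x)$ is $1$-Lipschitz only because every coordinate function of $F_m$ is assumed to reappear as a coordinate of $F_{m+1}$ (without that inclusion there is no reason why $|g_i^{(m)}(x)-g_i^{(m)}(y)|\le\|F_{m+1}(x)-F_{m+1}(y)\|_\infty$). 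But the chain then forces, by transitivity, that the $n+m$ coordinates of $F_m$ contain \emph{all} bump functions $g_i^{(j)}$ with $j\le m$. Since you refresh the compacts at every level (you choose $K_i^{(m)}$ to meet the bound $\al_i-\ep_m$ for the given $\ep_m$, and in the intended application $\ep_m\searrow 0$, so the $K_i^{(m)}$ and the thresholds $\delta_m$ genuinely change infinitely often), this is up to $nm$ distinct functions to be stored in $n+m$ slots: impossible for $n\ge2$, and even for $n=1$ the old bumps eventually occupy all but boundedly many slots, so only finitely many of the distance functions $\varphi_k$ ever appear among the coordinates and the squeeze argument for $Y_m\Boxto X$ collapses. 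So the interleaving as described cannot be carried out.

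The paper avoids this by keeping the first $n$ coordinate functions $d_X(\cdot,K_i)$ the same for all levels, so that $Y_m\prec Y_{m+1}$ is realized by the literal coordinate projection $p_{n+m}$; your version, in which the compacts move with $m$, is exactly where the argument breaks. A repair consistent with $\ep_m\to 0$ is to refresh the compact sets only along a sparse sequence of levels: since the whole sequence $(\ep_m)_m$ is given in advance, on each block of levels you may choose compacts already accurate for the smallest $\ep_m$ occurring in that block, and make the blocks long enough (length at least $2n$, say) that the accumulated bump functions never occupy more than half of the available coordinates. Then the coordinate-inclusion holds at every step, (3) still holds at every level, and infinitely many $\varphi_k$'s survive among the coordinates, so the squeeze argument for (2) goes through. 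Without some device of this kind (or without enlarging the dimension beyond $n+m$ to accommodate all bumps), your proof of (1) does not close.
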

\begin{proof}
We take any $m\in \N$, $\ep_m\in (0,\min\{\al_1,\ldots,\al_n\})$, and $\{A_i\}_{i=1}^n\in D_X(\bal)$. By Proposition \ref{findimappro}, there exists $\umu_m\in \P(\R^m)$ such that
\begin{align*}
X_1\prec X_2\prec\cdots\prec X_m\prec\cdots \prec X\text{\qquad and \qquad} X_m\Boxto X\qquad(m\to\infty),
\end{align*}
where $X_m:=(\R^m,\|\cdot\|_{\infty},\umu_m)$. Then, from the construction of $\umu_m$ in \cite{ShioyaMMG}, for any $m\in \N$, there exists a dominating map $f_m:X\to X_m$ such that $f_m=p_m\circ f_{m+1}$ where $p_m:X_{m+1}\to X_m$ is the projection defined as $p_m(x_1,\ldots,x_m,x_{m+1}):=(x_1,\ldots,x_m)$. We define a $1$-Lipschitz map $g_m:X\to (\R^{n+m},\|\cdot\|_{\infty})$ as
\begin{equation*}
g_m(x):=(d_X(x,K_1),d_X(x,K_2),\ldots, d_X(x,K_n), f_m(x)),
\end{equation*}
where $\{K_i\}_{i=1}^n$ is a family of compact subsets of $X$ satisfying $K_i\subset A_i$ and $\mu_X(K_i)\ge \al_i-\ep_m$. If we set $\mu_m:=(g_m)_*\mu_X$, then we have (1),(2), and (3). Indeed, we get $Y_m\prec X$ and $Y_m\prec Y_{m+1}$ by dominating maps $g_m$ and $p_{n+m}$ respectively. Furthermore, we have $X_m\prec Y_m$ by a dominating map $p_m':\R^{n+m}\to \R^m$ defined as $p_m'(x_1,\ldots,x_{n+m}):=(x_{n+1},\ldots, x_{n+m})$. Then, $X_m\prec Y_m\prec X$ and $X_m\Boxto X$ imply $Y_m\Boxto X$ by Proposition \ref{mmsqueeze}. Finally, we have $\{g_m(K_i)\}_{i=1}^n\in D_{Y_m}(\bal-\bep_m)$ due to 
\begin{equation*}
\mu_m(g_m(K_i))\ge \mu_X(K_i)\ge \al_i-\ep_m,
\end{equation*}
and
\begin{equation*}
\min_{i\ne j}d_{Y_m}(g_m(K_i), g_m(K_j))\ge \min_{i\ne j}d_X(K_i,K_j)>0.
\end{equation*}
This completes the proof.
\end{proof}
\begin{proof}[Proof of Proposition \ref{prop}]
To prove Proposition \ref{prop}, it is sufficient that $\Obsdiam''(X;\bal)=0$ and $D_X(\bal)\ne\emptyset$ imply that there are distinct points $\{x_i\}_{i=1}^n\subset X$ such that $\mu_X(\{x_i\})\ge \al_i$ for any $i\in \N_n$. %First, we prove Proposition \ref{prop} in the case of $n=n(\bal)<\infty$. 
By Lemma \ref{lemC}, for any $m\in \N$ and 
\begin{equation*}
\ep_m:=\frac{1}{2m}\min\{\al_1,\ldots,\al_n\}\in (0,\min\{\al_1,\ldots,\al_n\}),
\end{equation*}
there exists $\mu_m\in \P(\R^{n+m})$ such that we have
\begin{enumerate}
\item $Y_1\prec Y_2\prec\cdots\prec Y_m\prec\cdots \prec X$,\item $Y_m\Boxto X$ \qquad $(m\to\infty)$,
\item $D_{Y_m}(\bal-\bep_m)\ne \emptyset$,
\end{enumerate}
where $Y_m:=(\R^{n+m},\|\cdot\|_{\infty},\mu_m)$. In addition, if we take a $1$-Lipschitz function $f_m:Y_m\to \R$ satisfying (AP) of Corollary \ref{atom1Lip}, then Lemma \ref{lemB} implies $D_{(\R,(f_m)_*\mu_m)}(\bal-\bep_m)\ne \emptyset$. By this and 
\begin{equation*}
\diam'((\R,(f_m)_*\mu_m); \bal-\bep_m)\le \Obsdiam''(X;\bal)=0,
\end{equation*}
Lemma \ref{multidiamzero} implies that there are distinct points $\{b_{i,m}\}_{i=1}^n\subset \R$ such that $(f_m)_*\mu_m(\{b_{i,m}\})\ge \al_i-\ep_m$ for any $i\in \N_n$ and $m\in \N$. Moreover, it follows from (AP) of Corollary \ref{atom1Lip} that there are distinct points $\{x_{i,m}\}_{i=1}^n\subset Y_m$ such that $\mu_m(\{x_{i,m}\})=(f_m)_*\mu_m(\{b_{i,m}\})\ge \al_i-\ep_m$ for any $i\in \N_n$ and $m\in \N$. Then, we have $\ep_m\searrow 0$ and 
\begin{equation*}
\min\{\al_1-\ep_m,\ldots,\al_n-\ep_m\}\ge \left(1-\frac{1}{2m}\right)\min\{\al_1,\ldots,\al_n\}\ge \ep_1=:\al.
\end{equation*}
By Lemma \ref{lemA} for $\{p_{m,k}:Y_k\to Y_m\}_{m\le k}$ defined as 
\begin{equation*}
p_{m,k}:=p_{n+m}\circ p_{n+m+1}\circ\cdots\circ p_{n+k-1},
\end{equation*} 
where $p_m$ is the projection defined in the proof of Lemma \ref{lemC} and diagonal argument, there exists a subsequence $\{m_k\}_{k\in \N}$ such that we have $p_{m_{k+1},m_k}(x_{i,m_{k+1}})=x_{i,m_k}$ for any $i\in \N_n$ and $k\in \N$.

Then, for an $\ep_k'$-mm-isomorphism $\ph_k:Y_{m_k}\to X$ with $\ep_k'\searrow 0$ and any $i\in \N_n$, Lemma \ref{convlem} for $B_{i,k}:=\{\ph_k(x_{i,m_k})\}$ implies that there exists $\{x_i\}_{i=1}^n\subset X$ such that $\mu_X(\{x_i\})\ge \al_i$ for any $i\in \N_n$. Finally, we prove that $\{x_i\}_{i=1}^n$ are distinct. For any $i\in \N_n$, $x_i$ is the limit of a subsequence of $\{\ph_k(x_{i,m_k})\}_{k\in \N}$ by the construction of $x_i$ in Lemma \ref{convlem}. It follows from the definition of $\{m_k\}_{k\in \N}$ and $1$-Lipschitz continuity of $p_{m_k,m_{k+1}}$ that
\begin{align*}
d_X(x_i,x_j)&\ge \limsup_{k\to \infty} d_X(\ph_k(x_{i,m_k}),\ph_k(x_{j,m_k}))\\
&\ge \limsup_{k\to \infty} d_X(x_{i,m_k},x_{j,m_k})-2\ep_k'\\
&\ge \limsup_{k\to \infty} d_X(x_{i,m_{k-1}},x_{j,m_{k-1}})\\
&\ge d_X(x_{i,m_1},x_{j,m_1}).
\end{align*}
This means that $\{x_i\}_{i=1}^n$ are distinct since $\{x_{i,m_1}\}_{i=1}^n$ are distinct. 
\begin{comment}
This completes the proof in the case of $n<\infty$.

Next, we prove Proposition \ref{prop} in the case of $n=n(\bal)=\infty$. For any $m\in \N$, we define $\bal|_m\in \A$ as 
\begin{equation*}
\bal|_m:=(\al_1,\ldots,\al_m)
\end{equation*}
and see that $n(\bal|_m)=m<\infty$. Then, the following claim holds. 
\begin{claim}
$D_X(\bal)\ne\emptyset$ and $\Obsdiam''(X;\bal)=0$ imply $D_X(\bal|_m)\ne\emptyset$ and $\Obsdiam''(X;\bal|_m)=0$ for any $m\in \N$.
\end{claim}
\begin{proof}
We can see $D_X(\bal|_m)\ne\emptyset$ for any $m\in \N$ since $\{A_i\}_{i=1}^m$ is an element in $D_X(\bal|_m)$ for any $\{A_i\}_{i=1}^n\in D_X(\bal)$.

Next, we prove $\Obsdiam''(X;\bal|_m)=0$ for any $m\in \N$. For any $m\in \N$, any $f\in \Lip_1(X)$, and any $\bal'\le \bal|_m$, $\bal''\in \A$ defined as 
\begin{equation*}
\bal'':=(\al_1',\ldots,\al_m',\al_{m+1},\ldots)
\end{equation*}
satisfies $\bal''\le \bal$ and $D_X(\bal'')\ne \emptyset$ by $D_X(\bal)\ne \emptyset$. Then, 
\begin{equation*}
\diam'((\R,f_*\mu_X);\bal'')\le \Obsdiam''(X;\bal)=0
\end{equation*}
implies that 
\end{proof}
since it follows from $D_X(\bal)\ne\emptyset$ that $D_X(\bal|_m)\ne \emptyset$ for any $m\in \N$, Proposition \ref{prop} in the case of $n=n(\bal)<\infty$ implies that for any $m\in \N$, there are distinct points $\{x_{i,m}\}_{i=1}^m$ such that $\mu_X(\{x_{i,m}\})\ge \al_i$ for any $i\in \N_m$. Here, 
\end{comment}
\end{proof}
\section*{Acknowledgment}
The author would like to thank Associate Professor Takumi Yokota for his helpful comments and encouragement. The author also would like to thank Assistant Professor Daisuke Kazukawa for inspiring discussions.

This work was supported by JST SPRING, Grant Number JPMJSP2114.
\begin{bibdiv}
\begin{biblist}
\bib{AGS}{article}{
  author={Ambrosio, Luigi},
  author={Gigli, Nicola},
  author={Savar{\'e}, Giuseppe},
  title={Calculus and heat flow in metric measure spaces and applications to spaces with Ricci bounds from below},
  journal={Inventiones mathematicae},
  volume={195},
  number={2},
  pages={289--391},
  year={2014},
  publisher={Springer}
}
\bib{Esaki-Kazukawa-Mitsuishi}{article}{
  author={Esaki, Syota}, 
  author={Kazukawa, Daisuke},
  author={Mitsuishi, Ayato},
  title={Invariants for Gromov's pyramids and their applications},
  journal={Advances in Mathematics},
  volume={442},
  pages={109583},
  year={2024},
  publisher={Elsevier}
}

\begin{comment}
\bib{Funano-Shioya}{article}{
   author={Funano, Kei},
   author={Shioya, Takashi},
   title={Concentration, Ricci curvature, and eigenvalues of Laplacian},
   journal={Geom. Funct. Anal.},
   volume={23},
   date={2013},
   number={3},
   pages={888--936},
   issn={1016-443X},
   %review={\MR{3061776}},
   %doi={10.1007/s00039-013-0215-x},
}
\bib{Gigli-Mondino-Savare}{article}{
   author={Gigli, Nicola},
   author={Mondino, Andrea},
   author={Savar\'{e}, Giuseppe},
   title={Convergence of pointed non-compact metric measure spaces and
   stability of Ricci curvature bounds and heat flows},
   journal={Proc. Lond. Math. Soc. (3)},
   volume={111},
   date={2015},
   number={5},
   pages={1071--1129},
   issn={0024-6115},
   %review={\MR{3477230}},
   %doi={10.1112/plms/pdv047},
}
\end{comment}
\bib{Gromov}{book}{
   author={Gromov, Misha},
   title={Metric structures for Riemannian and non-Riemannian spaces},
   series={Modern Birkh\"auser Classics},
   edition={Reprint of the 2001 English edition},
   note={Based on the 1981 French original;
   With appendices by M. Katz, P. Pansu and S. Semmes;
   Translated from the French by Sean Michael Bates},
   publisher={Birkh\"auser Boston, Inc., Boston, MA},
   date={2007},
   pages={xx+585},
   isbn={978-0-8176-4582-3},
   isbn={0-8176-4582-9},
   %review={\MR{2307192 (2007k:53049)}},
}
\bib{Kazukawa-Ozawa-Suzuki}{article}{
   author={Kazukawa, Daisuke},
   author={Ozawa, Ryunosuke},
   author={Suzuki, Norihiko},
   title={Stabilities of rough curvature dimension condition},
   journal={J. Math. Soc. Japan},
   volume={72},
   date={2020},
   number={2},
   pages={541--567},
   issn={0025-5645},
   %review={\MR{4090346}},
   %doi={10.2969/jmsj/81468146},
}
\bib{Kechris}{book}{
   author={Kechris, Alexander S.},
   title={Classical descriptive set theory},
   series={Graduate Texts in Mathematics},
   volume={156},
   publisher={Springer-Verlag},
   place={New York},
   date={1995},
   pages={xviii+402},
   isbn={0-387-94374-9},
%   review={\MR{1321597 (96e:03057)}},
}
\begin{comment}
\bib{Ledoux}{book}{
   author={Ledoux, Michel},
   title={The concentration of measure phenomenon},
   series={Mathematical Surveys and Monographs},
   volume={89},
   publisher={American Mathematical Society},
   place={Providence, RI},
   date={2001},
   pages={x+181},
   isbn={0-8218-2864-9},
%   review={\MR{1849347 (2003k:28019)}},
}
\end{comment}
\bib{Levy}{book}{
   author={L{\'e}vy, Paul},
   title={Probl\`emes concrets d'analyse fonctionnelle. Avec un compl\'ement
   sur les fonctionnelles analytiques par F. Pellegrino},
   language={French},
   note={2d ed},
   publisher={Gauthier-Villars},
   place={Paris},
   date={1951},
   pages={xiv+484},
%   review={\MR{0041346 (12,834a)}},
}
\begin{comment}
\bib{Lott-Villani}{article}{
   author={Lott, John},
   author={Villani, C\'{e}dric},
   title={Ricci curvature for metric-measure spaces via optimal transport},
   journal={Ann. of Math. (2)},
   volume={169},
   date={2009},
   number={3},
   pages={903--991},
   issn={0003-486X},
   %review={\MR{2480619}},
   %doi={10.4007/annals.2009.169.903},
}
\bib{Magnabosco-Rigoni-Sosa}{article}{
   author={Magnabosco, Mattia},
   author={Rigoni, Chiara},
   author={Sosa, Gerardo},
   title={Convergence of metric measure spaces satisfying the CD condition for negative values of the dimension parameter},
   note={preprint (2021), arXiv:2104.03588},
}
\bib{Milman}{article}{
   author={Milman, Emanuel},
   title={Harmonic measures on the sphere via curvature-dimension},
   %language={English, with English and French summaries},
   journal={Ann. Fac. Sci. Toulouse Math. (6)},
   volume={26},
   date={2017},
   number={2},
   pages={437--449},
   issn={0240-2963},
   %review={\MR{3640898}},
   %doi={10.5802/afst.1540},
}
\end{comment}
\bib{V.D.Milman}{article}{
  author={Milman, V. D.},
  title={The heritage of P. L{\'e}vy in geometrical functional analysis},
  journal={Ast{\'e}risque},
  volume={157},
  number={158},
  pages={273--301},
  year={1988}
}
\bib{Nakajima-Shioya}{article}{
  author={Nakajima, Hiroki},
  author={Shioya, Takashi},
  title={Convergence of group actions in metric measure geometry},
  journal={arXiv preprint arXiv:2104.00187},
  year={2021}
}
\begin{comment}
\bib{Ohta}{article}{
   author={Ohta, Shin-ichi},
   title={$(K,N)$-convexity and the curvature-dimension condition for
   negative $N$},
   journal={J. Geom. Anal.},
   volume={26},
   date={2016},
   number={3},
   pages={2067--2096},
   issn={1050-6926},
   %review={\MR{3511469}},
   %doi={10.1007/s12220-015-9619-1},
}
\end{comment}
\bib{Oshima}{article}{
  author={Oshima, Shun},
  title={Stability of curvature-dimension condition for negative dimensions under concentration topology},
  journal={The Journal of Geometric Analysis},
  volume={33},
  number={12},
  pages={377},
  year={2023},
  publisher={Springer}
}
\bib{Ozawa-Shioya}{article}{
  author={Ozawa, Ryunosuke},
  author={Shioya, Takashi},
  title={Limit formulas for metric measure invariants and phase transition property},
  journal={Mathematische Zeitschrift},
  volume={280},
  number={3},
  pages={759--782},
  year={2015},
  publisher={Springer}
}
\begin{comment}
\bib{Ozawa-Yokota}{article}{
   author={Ozawa, Ryunosuke},
   author={Yokota, Takumi},
   title={Stability of RCD condition under concentration topology},
   journal={Calc. Var. Partial Differential Equations},
   volume={58},
   date={2019},
   number={4},
   pages={Paper No. 151, 30},
   issn={0944-2669},
   %review={\MR{3989955}},
   %doi={10.1007/s00526-019-1586-0},
}
\end{comment}
\bib{ShioyaMMG}{book}{
   author={Shioya, Takashi},
   title={Metric measure geometry. Gromov's theory of convergence and concentration of metrics and
   measures},
   %note={Gromov's theory of convergence and concentration of metrics and
   %measures},
   series={IRMA Lectures in Mathematics and Theoretical Physics},
   volume={25},
   publisher={EMS Publishing House, Z\"{u}rich},
   date={2016},
   pages={xi+182},
   isbn={978-3-03719-158-3},
   %review={\MR{3445278}},
   %doi={10.4171/158},
}
\begin{comment}
\bib{Sturm1}{article}{
   author={Sturm, Karl-Theodor},
   title={On the geometry of metric measure spaces. I},
   journal={Acta Math.},
   volume={196},
   date={2006},
   number={1},
   pages={65--131},
   issn={0001-5962},
   %review={\MR{2237206}},
   %doi={10.1007/s11511-006-0002-8},
}
\bib{Sturm2}{article}{
   author={Sturm, Karl-Theodor},
   title={On the geometry of metric measure spaces. II},
   journal={Acta Math.},
   volume={196},
   date={2006},
   number={1},
   pages={133--177},
   issn={0001-5962},
   %review={\MR{2237207}},
   %doi={10.1007/s11511-006-0003-7},
}
\bib{Villani}{book}{
   author={Villani, C\'{e}dric},
   title={Optimal transport, old and new},
   series={Grundlehren der mathematischen Wissenschaften [Fundamental
   Principles of Mathematical Sciences]},
   volume={338},
   %note={Old and new},
   publisher={Springer-Verlag, Berlin},
   date={2009},
   pages={xxii+973},
   isbn={978-3-540-71049-3},
   %review={\MR{2459454}},
   %doi={10.1007/978-3-540-71050-9},
}
\end{comment}

\end{biblist}
\end{bibdiv}
\end{document}